\documentclass[a4paper, 11pt, reqno]{amsart}

\usepackage[T1]{fontenc}
\usepackage[latin1]{inputenc}
\usepackage[margin=3cm]{geometry}
\usepackage{amsmath}
\usepackage{amssymb}
\usepackage{amsthm}
\usepackage[colorlinks,urlcolor=blue]{hyperref}

\newcommand{\abs}[1]{\lvert #1 \rvert}
\newcommand{\bigabs}[1]{\bigl\lvert #1 \bigr\rvert}
\newcommand{\W}[2]{W^{(#1)}_{#2}}
\newcommand{\Ggraph}[3][n]{G_{#1,#2}^{(#3)}}
\newcommand{\Gprimegraph}[3][n]{G'^{(#3)}_{#1,#2}}

\newcommand{\yvec}{\mathbf{y}}

\newcommand{\N}{\mathbb{N}}
\newcommand{\R}{\mathbb{R}}

\newtheorem{thm}{Theorem}[section]
\newtheorem{theorem}[thm]{Theorem}
\newtheorem{claim}[thm]{Claim}

\newtheorem{cor}[thm]{Corollary}
\newtheorem{corollary}[thm]{Corollary}
\newtheorem{lem}[thm]{Lemma}
\newtheorem{lemma}[thm]{Lemma}

\newtheorem{problem}[thm]{Problem}
\newtheorem{prop}[thm]{Proposition}
\newtheorem{proposition}[thm]{Proposition}

\newtheorem{question}[thm]{Question}

\theoremstyle{remark}

\newtheorem{remark}[thm]{Remark}

\theoremstyle{definition}
\newtheorem{example}[thm]{Example}

\title{Strong Tur\'{a}n stability}

\author{Mykhaylo Tyomkyn}
\address{School of Mathematics, University of Birmingham, Edgbaston, Birmingham, B15 2TT, UK}
\email{\href{mailto:m.tyomkyn@bham.ac.uk}{m.tyomkyn@bham.ac.uk}}

\author{Andrew~J. Uzzell}
\address{Department of Mathematics, University of Nebraska, Lincoln NE 68588-0130, USA}
\email{\href{mailto:andrew.uzzell@math.unl.edu}{andrew.uzzell@math.unl.edu}}

\date{\today}

\thanks{The first author's research was supported by EPSRC grant EP/K033379/1.  This research was carried out while the second author was at Uppsala University, where he was supported by the Knut and Alice Wallenberg Foundation.}

\begin{document}

\begin{abstract}
We study the behaviour of $K_{r+1}$-free graphs~$G$ of almost extremal size, that is, typically, $e(G)=ex(n,K_{r+1})-O(n)$. We show that such graphs must have a large amount of `symmetry', in particular that all but very few vertices of~$G$ must have twins. As a corollary, we obtain a new, short proof of a theorem of Simonovits on the structure of extremal graphs with $\omega(G)\leq r$ and $\chi(G)\geq k$ for fixed $k \geq r \geq 2$.
\end{abstract}

\maketitle

\section{Introduction}\label{se:intro}

Let $T_{n,r}$ denote the Tur\'{a}n graph on $n$ vertices with $r$ partition classes of size $\lfloor n/r \rfloor$ or $\lceil n/r \rceil$ each, and put $t_{n,r}:=e(T_{n,r})$. From  Tur\'{a}n's theorem we know that $t_{n,r}$ maximises the size of a $K_{r+1}$-free graph of order~$n$. One of the best-known extensions of Tur\'{a}n's theorem is the Erd\H{o}s--Simonovits stability theorem, which says, in particular, that a $K_{r+1}$-free graph on $n$ vertices and $t_{n,r}-o(n^2)$ edges can be turned into $T_{n,r}$ by adding or removing $o(n^2)$ edges. 
To phrase it qualitatively, a $K_{r+1}$-free graph whose size is close to being extremal looks essentially like the extremal graph. This behaviour has become known as \emph{stability} and has been extensively studied in various structures.

In this paper we are concerned with different aspects of Tur\'{a}n stability. More concretely, we will study $K_{r+1}$-free graphs~$G$ with $e(G)=t_{n,r}-O(n)$ or $e(G)=t_{n,r}-O(n\log n)$. This is much closer to the Tur\'{a}n threshold than the range of the Erd\H{o}s--Simonovits stability theorem and allows to observe different aspects of stability. Our results can therefore be viewed as a part of a larger programme of studying the `phase transition' of $K_{r+1}$-free graphs near the Tur\'{a}n threshold that has been emphasized by Simonovits.

First, in Section \ref{se:strongstab} we give a new proof of a theorem on the maximum size of a $K_{r+1}$-free graph of chromatic number at least~$r + 1$.  This result was first explicitly proved by Brouwer~\cite{Brou}, although implicitly it follows from earlier work of Simonovits~\cite{Sim68}, and in the case $r=2$ it was proved by Andr\'asfai, Erd\H{o}s, and Gallai~\cite{Erd}. It has also been re-discovered several times~\cite{AFGS,HT,KangPik}. 

Let
\begin{equation}\label{eq:hdef}
	h(n, r) = \begin{cases}
	    	t_{n,r} - \bigl\lfloor \frac{n}{r} \bigr\rfloor + 1, & n \geq 2r + 1,\\
		t_{n,r} - 2, & r + 3 \leq n \leq 2r,
	    \end{cases}
\end{equation}
and note that the second case is vacuous if $r = 2$.

\begin{thm}\label{thm1}
If $n \geq r + 3$, then every $K_{r+1}$-free graph of order~$n$ and size at least~$h(n,r) + 1$ is $r$-colourable.
\end{thm}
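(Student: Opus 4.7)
My plan is to derive Theorem \ref{thm1} as a corollary of the strong stability theorem advertised in the abstract: any $K_{r+1}$-free graph $G$ on $n$ vertices with $e(G) = t_{n,r} - O(n)$ has all but a bounded number of its vertices in non-trivial twin-classes. Since the hypothesis $e(G) \geq h(n,r) + 1 = t_{n,r} - \lfloor n/r \rfloor + 2$ places $G$ squarely in this regime, I would begin by invoking the stability theorem to obtain a partition $V(G) = U \cup X$, where $X$ is small and every vertex of $U$ belongs to a twin-class of size at least~$2$.

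Twin-classes are, by definition, independent sets of pairwise non-adjacent vertices sharing the same neighbourhood; in particular the quotient graph $F$ on the twin-classes of $G[U]$ is $K_{r+1}$-free, and $G[U]$ is a blow-up of $F$. If $\chi(F) \leq r$ then $G[U]$ is $r$-colourable, and I would extend the colouring to $X$ greedily: the very tight edge count forces each $v \in X$ to miss at least one colour class in its neighbourhood, since otherwise a representative of each colour class together with $v$ would either form a forbidden $K_{r+1}$ or cause an edge deficit incompatible with $e(G) \geq h(n,r) + 1$.

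The remaining and main task is to exclude $\chi(F) \geq r + 1$. Any such $F$ contains a vertex-critical subgraph of chromatic number at least $r + 1$ and minimum degree at least $r$, and I would argue that its blow-up on $\lvert U \rvert$ vertices must have at least $\lfloor n/r \rfloor - 1$ fewer edges than the balanced $r$-partite graph $T_{\lvert U \rvert, r}$. Combined with the $O(1)$-sized contribution of $X$, this should force $e(G) \leq h(n,r)$, contradicting the hypothesis.

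\textbf{Main obstacle.} The delicate part is making the edge-deficit estimate tight enough to yield $h(n,r)$ (not merely $t_{n,r} - \Omega(n)$), since Theorem \ref{thm1} is genuinely sharp. This will need a careful case analysis of how $X$ attaches to the twin-classes of $U$, and a separate, direct treatment of the small regime $r + 3 \leq n \leq 2r$, where $h(n,r) = t_{n,r} - 2$ and the strong stability machinery may be too coarse to apply.
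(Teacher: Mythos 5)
Your approach differs fundamentally from the paper's: the paper proves Theorem~\ref{thm1} directly and elementarily in Section~\ref{se:strongstab} via Zykov symmetrization (Lemma~\ref{Zykov}) followed by a sequence of degree-decreasing edge switches and a final completion step, whereas you want to derive it as a corollary of the twin-structure stability results of Section~\ref{se:CliqueSat}. The approaches are logically independent (the proofs of Theorems~\ref{thm:SimpleBound} and~\ref{thm:TwinFreenlogn} rest on the Andr\'asfai--Erd\H{o}s--S\'os theorem, not on Theorem~\ref{thm1}), so there is no circularity. However, your proposal contains several genuine gaps that are not merely technical.

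The central obstruction, which you flag yourself but do not resolve, is fatal as stated. The stability machinery in Sections~\ref{se:CliqueSat}--\ref{se:chromturan} is asymptotic: Theorem~\ref{thm:SimpleBound} delivers ``$G$ is a blow-up of a bounded-order $H$,'' and Corollary~\ref{cor:blowupbound} then yields $e(G) = t_{n,r} - \Lambda_r(H)n/r + O(1)$. Since $\chi(H)\geq r+1$ forces $\Lambda_r(H)\geq 1$, this gives $e(G)\leq t_{n,r}-n/r + O(1)$, but Theorem~\ref{thm1} is an \emph{exact} result with the additive constant pinned down; the $O(1)$ ambiguity is precisely the content you need to control. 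You would in effect have to reprove the sharp bound by a separate finite argument over the bounded-order graphs $H$ with $\Lambda_r(H)=1$, and no such argument is sketched. Three further problems: (i) Theorem~\ref{thm:SimpleBound} and Corollary~\ref{cor:TwinFreenlognAlt} are stated for $(r+1)$-\emph{saturated} graphs, so you must first pass to a maximal $K_{r+1}$-free supergraph $G^+\supseteq G$ (routine, but unstated and necessary for the hypotheses to apply); (ii) the greedy extension step is unsound as written, since having a neighbour in each colour class of $G[U]$ only creates a $K_{r+1}$-threat if those neighbours are mutually adjacent, which is not automatic when $F$ is not complete $r$-partite; (iii) the regime $r+3\leq n\leq 2r$, where $h(n,r)=t_{n,r}-2$, is entirely outside the reach of the asymptotic machinery and must be treated from scratch. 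The paper's Zykov-based proof sidesteps all of this: it applies uniformly for all $n\geq r+3$ and yields the exact constant with a short direct count in Step~3.
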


Unlike the Erd\H{o}s--Simonovits theorem, which says that a $K_{r+1}$-free graph on sufficiently many edges is \emph{approximately} $r$-partite, this theorem gives a condition for a $K_{r+1}$-free graph to \emph{actually} be $r$-partite. 

A natural generalisation of Theorem~\ref{thm1} would be to find the maximal number of edges in a graph~$G$ with $|V(G)|=n$, $\omega(G)=r$ and $\chi(G)\geq k$. It is easy to see that the extremal number is of order~$t_{n,r}-O(n)$: for instance, take the disjoint union of a Tur\'an graph~$T_{n',r}$ and a finite size graph~$G'$ with $\omega(G')=r$ and $\chi(G')\geq k$. Determining the constant in the linear term asymptotically as $k\rightarrow \infty$ is less interesting in its own right, since it is closely related to the asymptotic behaviour of the Ramsey numbers~$R(r+1,k)$. (This connection is discussed further in Remark~\ref{re:TriangleFreeAsymptotics}.) We do, however, determine the constant exactly in the first open case $k=r+2$; see Theorem \ref{thm:rplus2}.

A much more interesting problem is the structure of the extremal graphs. One simple way to construct such graphs (more efficiently than the trivial construction given above) is the following: take a finite size graph $G'$ with $\omega(G')=r$ and $\chi(G')=k$, and blow up an $r$-clique of $G'$ in a way that maximises the number of edges. Let us call a graph (or, more precisely, a graph sequence) \emph{simple} if it is a blow-up of a bounded order graph. It is natural to ask whether the extremal graph must be simple. This was answered in the affirmative by Simonovits for $r=2$ in~\cite{Sim69} and (as a part of a more general result) for arbitrary $r$ in~\cite{Sim74}.

Recall that a graph~$G$ is called \emph{maximal $H$-free} or \emph{$H$-saturated} if it is $H$-free but adding any edge to~$G$ would create a copy of~$H$ as a subgraph. For $H=K_{r+1}$ the corresponding saturated graphs are also called \emph{$(r+1)$-saturated}.  In Section \ref{se:CliqueSat} we suggest a new generalisation of Theorem~\ref{thm1}, namely the study of $K_{r+1}$-saturated graphs on many edges; note that the extremal graph for a given chromatic number is a special case. In the spirit of Simonovits' theorem we prove sharp bounds on how large $e(G)$ should be in order for $G$ to be simple. Perhaps surprisingly, the thresholds for $r=2$ and for $r\geq 3$ turn out to be substantially different, with the proof being very short in the former case and more involved in the latter.

\begin{theorem}\label{thm:SimpleBound}
For every $c > 0$ every $3$-saturated graph~$G$ on $n$ vertices with $e(G)> t_{n,2}-cn$ is simple.

Let $r \geq 3$.  For every $\varepsilon>0$ every $(r+1)$-saturated graph~$G$ on $n$ vertices with $e(G)> t_{n,r}-(2-\varepsilon)n/r$ is simple.
\end{theorem}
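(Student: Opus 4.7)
Our plan is to approach both parts of the theorem via the twin-quotient of $G$. For $u,v \in V(G)$, write $u \sim v$ if they are twins, i.e.\ $N(u)=N(v)$, and let $\tilde G$ be the quotient graph with weights $(t_i)$ equal to the sizes of the twin classes. Then $G$ is the blow-up of $\tilde G$ with weights $(t_i)$, so $G$ is simple precisely when $|V(\tilde G)|$ is bounded by a constant independent of $n$.

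We first observe that $\tilde G$ inherits from $G$ the properties of being $K_{r+1}$-free and $K_{r+1}$-saturated. The former is immediate, since a blow-up is $K_{r+1}$-free if and only if the base graph is. For saturation, take a non-edge $[u][v]$ of $\tilde G$, lift it to a non-edge $uv$ of $G$, and use the $K_{r+1}$-saturation of $G$ to find an $r$-clique of common neighbours of $u$ and $v$; since any two adjacent vertices lie in distinct twin classes, this clique descends to an $r$-clique of common neighbours of $[u]$ and $[v]$ in $\tilde G$. By construction $\tilde G$ is also twin-free. Hence we are reduced to showing that a twin-free, $K_{r+1}$-free, $K_{r+1}$-saturated graph $\tilde G$, whose blow-up by positive integer weights summing to $n$ satisfies the hypothesised lower bound on edges, must have boundedly many vertices.

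For $r=2$ we invoke the strong Tur\'an stability result of Section~\ref{se:strongstab}, which guarantees that only $O_c(1)$ vertices of $G$ lie in singleton twin classes, so all but $O_c(1)$ of the weights $t_i$ are at least $2$. Next, any bipartite, $K_3$-saturated, twin-free graph must be $K_2$, since a complete bipartite graph with a side of size at least $2$ has twins. So if $|V(\tilde G)|>2$, then $\tilde G$ contains an odd cycle, necessarily of length at least $5$. A weighted Tur\'an computation along such an odd cycle shows that the defect $n^2/4-\sum_{ij \in E(\tilde G)}t_i t_j$ grows linearly in $n$ with a constant that blows up with the length of the cycle, and hence with $|V(\tilde G)|$. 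Combined with $e(G)>t_{n,2}-cn$, this bounds $|V(\tilde G)|$ in terms of $c$, yielding the short proof.

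For $r \geq 3$ the same framework applies, but the much tighter threshold $(2-\varepsilon)n/r$ demands a far more delicate bound on $|V(\tilde G)|$, and this is the main obstacle. The plan is to show that the extremal quotient is $\tilde G=K_r$, and that any twin-free, $K_{r+1}$-free, $K_{r+1}$-saturated graph on $r+1$ or more vertices, once blown up by positive integer weights summing to $n$, loses at least $2n/r-O_r(1)$ edges compared to $t_{n,r}$. Proving this requires an extremal analysis of $(r+1)$-saturated graphs on small vertex sets together with a Lagrangian-style optimisation of the weighted edge count $\sum_{ij \in E(\tilde G)}t_it_j$ under the constraints $t_i \geq 1$ and $\sum_i t_i=n$. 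The appearance of the precise constant $2/r$ reflects the minimum cost of replacing a single vertex of an optimal $K_r$-blow-up by one that lies in its own twin class, and it is precisely here that the analysis differs qualitatively from the $r=2$ case.
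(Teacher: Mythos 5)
Your reduction to the twin-quotient $\tilde G$ is a clean reformulation, and the observations that $\tilde G$ is twin-free, $K_{r+1}$-free, and $K_{r+1}$-saturated are correct. However, both halves of the argument have genuine gaps, and the $r\geq 3$ half is built on a false claim.

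For $r=2$, the plan is to show that $\lvert V(\tilde G)\rvert$ bounded follows from an odd cycle of length at least $5$ in $\tilde G$. But a $C_5$ in $\tilde G$ only forces a \emph{fixed} linear defect (of order $n/2$), not a defect that grows with $\lvert V(\tilde G)\rvert$; and nothing you say ties the length of the longest odd cycle in $\tilde G$ to $\lvert V(\tilde G)\rvert$. In the paper's Example~\ref{ex:3satunbounded} the quotient has exponentially many vertices but a defect of only $n f(n)$, so you would need the odd cycle length to grow like $\log\lvert V(\tilde G)\rvert$; this is neither shown nor obviously true. The invocation of ``Section~\ref{se:strongstab}'' to bound the number of singleton twin classes also does not hold up: Theorem~\ref{thm1} says nothing about twin classes and only applies in the range $e(G) > t_{n,2}-n/2+O(1)$, which is far stronger than $e(G) > t_{n,2}-cn$ for large $c$. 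The paper's actual proof of Theorem~\ref{thm3saturated} avoids all of this: it uses Corollary~\ref{cor:AEScor} to strip off a bounded exceptional set $V_e$, and then shows directly from $3$-saturation that each remaining vertex's neighbourhood is determined by its $V_e$-neighbourhood (disjoint $V_e$-neighbourhoods force an edge, intersecting ones forbid it), bounding the number of twin classes by $\lvert V_e\rvert + 2\cdot 2^{\lvert V_e\rvert}$.

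For $r\geq 3$, the stated plan rests on the claim that every twin-free, $K_{r+1}$-free, $(r+1)$-saturated graph on at least $r+1$ vertices, when blown up to order $n$, has defect at least $2n/r - O_r(1)$, with the ``extremal quotient'' being $K_r$. This is false. The graph $G_{n,r}$ from Section~\ref{se:strongstab} is $(r+1)$-saturated, has $r+3$ twin classes, and has $e(G_{n,r}) = t_{n,r} - \lfloor n/r\rfloor + 1$, i.e.\ defect only $\approx n/r$, not $2n/r$. Its quotient $\tilde G$ is a twin-free, $K_{r+1}$-free, $K_{r+1}$-saturated graph on $r+3 \geq r+1$ vertices whose optimal blow-up beats your claimed bound, and its quotient is not $K_r$. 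The correct statement (Theorem~\ref{thmrsaturated}) only asserts that the number of twin classes is \emph{bounded} above the threshold $t_{n,r}-(2-\varepsilon)n/r$, not that it equals $r$; indeed Theorem~\ref{thm1} guarantees $G$ is complete $r$-partite only once $e(G) > t_{n,r}-n/r+O(1)$. The paper's proof of Theorem~\ref{thmrsaturated} instead again removes a bounded exceptional set $F$, classifies for each bounded $X\subset F$ the $r$-partite colour classes into $X$-big and $X$-small, and uses $(r+1)$-saturation to show that adjacency of any $w_1\in V_1$, $w_2 \in V_2$ is determined by $N_F(w_1)\cap N_F(w_2)$, which bounds the number of twin classes. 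You would need a qualitatively different lemma to make the quotient-optimisation approach land on the sharp constant $2/r$.
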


Taking this study further, we obtain a sharp threshold for a maximal $K_{r+1}$-free graph to have a single pair of \emph{twin} vertices (that is, vertices with identical neighbourhoods). Clearly, this threshold must be lower than the bound in Theorem \ref{thm:SimpleBound}. We consider the following theorem to be the main result of this paper. 

\begin{thm}\label{thm:TwinFreenlogn}
For every $r\geq 2$ there exists a constant $c > 0$ such that every sufficiently large $(r+1)$-saturated graph~$G$ with $e(G) \geq t_{n,r}-cn \log n$ has a pair of twin vertices.
\end{thm}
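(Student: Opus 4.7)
The plan is to argue by contradiction: suppose $G$ has no pair of twin vertices, and show this forces $e(G) < t_{n,r} - cn\log n$ for a suitable $c>0$. The argument has three stages: stability, saturation rigidity, and counting.

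Stability. Since $e(G)\geq t_{n,r}-cn\log n$ and $G$ is $K_{r+1}$-free, a quantitative stability argument in the dense regime yields an $r$-partition $V(G) = V_1 \sqcup \cdots \sqcup V_r$ with balanced parts $|V_i| = n/r + o(n)$, chosen to minimise the total deviation $D := \sum_i\sum_{v\in V_i}|\phi(v)|$, where $\phi(v) := N(v) \triangle (V(G)\setminus V_i)$ for $v\in V_i$ records the within-part edges at $v$ together with the missing cross-edges. Optimality of the partition, together with $K_{r+1}$-freeness, gives $D = O(cn\log n)$. Since $\phi(v)$ and the class label together determine $N(v)$, the no-twins assumption makes $\phi$ injective on each $V_i$.

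The central task is then to show that $(r+1)$-saturation forces the admissible $\phi$-values to form a low-entropy family, since injectivity alone allows every $v$ to have $|\phi(v)|\leq 1$ and would yield only $D = O(n)$. Saturation demands $N(v)\cap N(w)\supseteq K_{r-1}$ for every non-edge $vw$; in the near-Tur\'an setting, for $v \in V_i$ and a missing cross-neighbour $w\in V_j\setminus N(v)$, the intended $K_{r-1}$ must live in $V(G)\setminus(V_i\cup V_j)$, which spans only $r-2$ parts and so cannot accommodate a $K_{r-1}$ without an auxiliary within-part edge at $v$ or at $w$. Thus no deviation is isolated: each element of $\phi(v)$ must drag in further deviations at nearby vertices. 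Iterating this cascade should yield the key rigidity claim, namely that the number of admissible $\phi$-values of size at most $k$ within a single part is at most $C^k$ for some $C=C(r)$, because each such value is confined to a constant-sized closure generated by its saturation witnesses.

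Given the rigidity, injectivity of $\phi$ on a part $V_i$ of size $\sim n/r$ forces most vertices of $V_i$ to satisfy $|\phi(v)| \geq c_0\log n$ for some $c_0 = c_0(r)>0$; summing over the $r$ parts gives $D = \Omega(n\log n)$, contradicting the stability bound once $c$ is chosen sufficiently small. The main obstacle is the rigidity step: naive distinctness alone produces $D \gtrsim n$, a factor of $\log n$ short, and one must genuinely use saturation to track how each missing cross-neighbour propagates further deviations through its $K_{r-1}$ witness. For $r=2$ the witness is a single vertex and the cascade can be unwound directly; for $r\geq 3$ the witness is itself a clique whose members may further propagate deviations, so one expects an iterative or compression-style argument organising the witnesses into a structured object of bounded branching. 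This added complexity mirrors the qualitative gap between the two regimes already visible in Theorem~\ref{thm:SimpleBound}.
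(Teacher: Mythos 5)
Your overall strategy---argue by contradiction, exhibit an $r$-partition with a small deviation quantity~$D$, observe that twin-freeness makes the deviation profile~$\phi$ injective on each part, and then try to force $D = \Omega(n\log n)$ from saturation---is sound in outline, and the stability and injectivity steps go through essentially as you describe. However, the proof has a genuine gap at exactly the place you flag: the ``rigidity claim'' that the number of admissible $\phi$-values of size at most~$k$ in a single part is at most~$C^k$. This is the entire content of the theorem, and the cascade idea you sketch (each missing cross-edge ``drags in'' further deviations via its $K_{r-1}$ saturation witness) does not by itself yield an exponential bound. The trouble is that a $K_{r-1}$ witness for a missing edge $vw$ need not lie inside any previously identified small set: its vertices can themselves be ordinary high-degree vertices of $V_3,\dots,V_r$, which are not recorded in $\phi(v)$ or $\phi(w)$ at all, so the cascade is not a priori confined to a ``constant-sized closure.'' Without a mechanism that forces these witnesses into a bounded (or $O(\log n)$-sized) set, the number of $\phi$-values of size $\leq 1$ alone can be $\Omega(n)$, and the injectivity-plus-counting step never gets off the ground.

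The paper's proof of Theorem~\ref{thm:TwinFreenlogn} for $r\geq 3$ supplies precisely this mechanism, and it is genuinely different from your plan. Rather than bounding the entropy of the deviation profiles directly, it first isolates a set $F^1$ of $O(\log n)$ low-degree vertices via Theorem~\ref{thm:AES}, partitions $V_1$ by $F^1$-neighbourhood, and then for each class $W$ looks at the $(r-1)$-uniform hypergraph $H^1$ of potential saturation witnesses attached to the common $F^1$-neighbourhood of~$W$. The key move is a vertex-cover/matching dichotomy: either $H^1$ has a vertex cover $C^1$ of size $<\log_2|W|$, in which case one adjoins $C^1$ to the distinguishing set and refines $W$ further (now against a $(r-2)$-uniform hypergraph $H^2$), or else $H^1$ has a matching of size $\Omega(\log|W|)$, and each matching edge forces at least one missing edge out of every $w\in W$, giving $\Omega(|W|\log|W|)$ missing edges charged to $W$. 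Iterating at most $r-2$ times, and using twin-freeness to rule out the ``small cover'' branch at the final stage $j=r-2$, one always lands in the matching case for some~$j$. This replaces your unproven rigidity claim with a concrete combinatorial alternative (small cover vs.\ large matching) and keeps the distinguishing set at size $O(\log n)$ throughout, which is what makes the pigeonhole/counting close. If you want to salvage your approach, the vertex-cover dichotomy is the idea to import: it is what tames the branching of the saturation witnesses that you correctly identify as the obstacle for $r\geq 3$.
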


Note that unlike Theorem~\ref{thm:SimpleBound}, in this case the bounds are similar for all values of~$r$, though the proof is still much shorter in the case $r = 2$ (see Proposition \ref{prop:3SatManyTwins}). As a corollary of Theorem~\ref{thm:TwinFreenlogn}, we obtain a new, simple proof of the aforementioned theorem of Simonovits, formally stated as follows.

\begin{theorem}\label{thm:kChromaticBounded}
For each $r \geq 2$ and each $k \geq r$, there exists $m(k, r)$ such that if $G$ is an extremal $K_{r+1}$-free graph with chromatic number at least~$k$, then $G$ is a blow-up of a graph~$G'$ with $|G'|\leq m(k, r)$. 
\end{theorem}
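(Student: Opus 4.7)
The plan is to pass to the twin-free quotient of $G$ and bound its order via Theorem~\ref{thm:TwinFreenlogn}.

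Since adding an edge to $G$ cannot decrease $\chi(G)$, extremality forces $G$ to be $K_{r+1}$-saturated. The standard lower bound construction (disjoint union of $T_{n-s, r}$ with a fixed graph of clique number $r$ and chromatic number $k$) gives $e(G) = \mathrm{ex}(n, r, k) \geq t_{n, r} - C(k, r)\,n$, placing $G$ in the regime of Theorem~\ref{thm:TwinFreenlogn} for $n$ large. I would let $V_1, \ldots, V_m$ be the equivalence classes of the relation $x \sim y \iff N_G(x) = N_G(y)$, and let $G^*$ be the resulting quotient graph; then $G$ is the blow-up of $G^*$ with part-sizes $|V_i|$. Standard checks show that $G^*$ is $K_{r+1}$-free and $(r+1)$-saturated --- any saturating $K_{r+1}$ in $G + xy$ (for representatives $x \in V_i, y \in V_j$ of a non-edge $V_iV_j$) uses distinct twin classes, since twins are non-adjacent, and so descends to $G^*$ --- has $\chi(G^*) = \chi(G) \geq k$ (colorings transfer freely by collapsing or expanding twin classes), and is twin-free by construction.

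The bound on $m$ then arises as follows. If $m$ is sufficiently large and $e(G^*) \geq t_{m, r} - c m \log m$, applying Theorem~\ref{thm:TwinFreenlogn} to $G^*$ produces a twin pair, contradicting twin-freeness. To verify the edge-count hypothesis, I would exploit the extremality of $G$: the sizes $|V_i|$ must maximise $\sum_{ij \in E(G^*)} |V_i||V_j|$ subject to $\sum |V_i| = n$ and $|V_i| \geq 1$. A Motzkin--Straus-type analysis (mass concentrates on a maximum $r$-clique $C$ of $G^*$, with each of the remaining $m - r$ classes receiving unit size) shows that any non-clique vertex with strictly fewer than $r - 1$ neighbours in $C$ depletes the blow-up edge count by $\Omega_r(n)$. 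Combined with $e(G) \geq t_{n, r} - O(n)$, this forces $G^*$ to have a very specific ``near-Tur\'anic'' structure with only $O_{k, r}(1)$ exceptional vertices, from which one deduces $e(G^*) \geq t_{m, r} - O_{k, r}(m)$, well within Theorem~\ref{thm:TwinFreenlogn}'s hypothesis for $m$ large. This furnishes the contradiction and hence caps $m$ at some $m(k, r)$, yielding the desired blow-up structure of $G$.

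\textbf{Main obstacle.} The main difficulty is the structural/quantitative analysis of $G^*$ that combines twin-freeness, $(r+1)$-saturation, and $\chi(G^*) \geq k$ with the optimisation of the blow-up sizes forced by extremality. The chromatic constraint enters crucially, since a purely ``bulk'' near-Tur\'an structure on $G^*$ would be $r$-colourable (one assigns each non-clique vertex the colour of its unique non-neighbour in the clique and verifies this is proper); hence at least a bounded number of ``defect'' vertices must appear to boost the chromatic number up to $k$, and it is the interplay between the bounded number of defects (from the extremality-forced edge-count constraint) and the restrictions on $G^*$ imposed by twin-freeness that ultimately pins $m$ down to a constant depending only on $k$ and $r$.
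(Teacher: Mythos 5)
Your overall plan --- pass to the twin-free quotient $G^*$ of $G$, verify that it is $(r+1)$-saturated with $\chi(G^*)\geq k$, and then try to contradict Theorem~\ref{thm:TwinFreenlogn} --- is genuinely different from the paper's argument, and your reductions (saturation, $\chi(G^*)=\chi(G)$, lower bound on $e(G)$) are all correct. The gap is in the step ``from which one deduces $e(G^*)\geq t_{m,r}-O_{k,r}(m)$.'' Concluding that $G^*$ has $O_{k,r}(1)$ exceptional vertices outside the $r$ ``Tur\'anic'' classes does not by itself say anything about the number of edges \emph{between} those classes, so one must go through the blow-up optimisation explicitly, and there a real difficulty appears: the optimal blow-up sizes for $G^*$ on the maximising clique $C$ are not equal, but differ by $\deg_{G^*}(c_i)-\bar d$ (this is exactly the Motzkin--Straus calculation in Lemma~\ref{lemlagrangian}), and when $m=|G^*|$ grows with $n$ the ``up to $O(1)$'' of that lemma can become $\Theta(m^2)$. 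Carrying out the computation one gets $t_{n,r}-e(G)=(t_{m,r}-e(G^*))+\frac{\Lambda_r(G^*)(n-m)}{r}-\frac{1}{2}\mathrm{Var}+O(1)$ with $\mathrm{Var}=\sum_{i\in C}(\deg_{G^*}(c_i)-\bar d)^2$ possibly of order $m^2$; comparing with the competitor blow-up only yields $e(G^*)\geq t_{m,r}-O(m^2)$, which is far too weak to feed into Theorem~\ref{thm:TwinFreenlogn}. Nothing in the hypotheses forces the degrees on $C$ to be balanced, nor forces the twin classes of $G$ (equivalently the optimal blow-up weights) to be balanced, so this term cannot simply be waved away.

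The paper avoids precisely this problem by \emph{not} going all the way to the twin-free quotient. It identifies only $t=\min_i|T_i|$ vertices from each of the $r$ dominant twin classes, yielding a partial quotient $G'$ that is still $(r+1)$-saturated and contains a twin-free independent set of linear size (this is what Claim~\ref{cl:balanced} is for: it controls how unbalanced the $|T_i|$ can be). Then Corollary~\ref{cor:TwinFreenlognAlt} is applied to $G'$ to \emph{upper}-bound $e(G')$, a competitor $H'$ with $\Lambda_r(H')=\Lambda_r(k)$ is built on the same $n'$ vertices, and --- crucially --- both $G'$ and $H'$ are blown back up to order $n$ by the \emph{same} factor $t$ on an $r$-clique, so the blow-up bookkeeping is exact and no variance term appears. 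To fix your argument you would either need to show that extremality forces the degrees of $G^*$ on its maximising clique (equivalently, the twin-class sizes of $G$) to be balanced up to $O(1)$, which is essentially the content of Claim~\ref{cl:balanced} and is not automatic, or switch to the paper's matched-factor comparison; as written, the proposal does not establish the edge-count hypothesis it needs and so does not yet give a complete proof.
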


In other words, for every $r$ and $k$, the sequence of extremal graphs~$G$ for $\omega(G)\leq r$ and $\chi(G)\geq k$ is simple. 

We also discuss some other aspects of stability in the linear sub-regime and prove a number of smaller results. 

It should be said that the corresponding minimal degree (rather then graph size) condition has been extensively studied in a number of papers. This will not be in the scope of our discussion. For a discussion of these results, see, e.g., the survey~\cite{Nik}.

The rest of this paper is organised as follows.  In Section~\ref{se:strongstab}, we prove Theorem~\ref{thm1} and classify the extremal graphs.  In Section~\ref{se:CliqueSat}, we prove Theorems \ref{thm:SimpleBound} and~\ref{thm:TwinFreenlogn}, as well as other results about large $K_{r+1}$-saturated graphs.  In Section~\ref{se:chromturan}, we prove Theorem~\ref{thm:kChromaticBounded}.  We then use this result to determine the size of an extremal $K_{r+1}$-free graph of chromatic number at least~$k$ up to an additive constant.

\section{A new proof of Theorem~\ref{thm1}}\label{se:strongstab}

In this section we prove Theorem~\ref{thm1} and classify the extremal graphs.

First, the following construction shows that the bound in Theorem~\ref{thm1} is tight: take a copy of $T_{n-1,r}$ on partition classes $V_1$, \dots,~$V_r$.
Take a new vertex~$u$ and connect it to each vertex in $V_3$, \dots,~$V_r$ and to one vertex from each of $V_1$~and~$V_2$; call them $v_1$ and $v_2$. Lastly, remove the edge~$v_1v_2$. For $n > 2r$, taking $V_1$ and $V_2$ to be the smallest partition classes, this construction achieves the bound of Theorem~\ref{thm1}. On the other hand, for $r \geq 3$ and $r+3\leq n \leq 2r$ this construction does not work, since the obtained graph will be $r$-colourable. Instead, the extremal construction in this case is achieved by taking $V_1$ and $V_2$ to be the largest partition classes (of size~$2$), for a total of $t_{n,r}-2$ edges.  In each case, we call the resulting graph~$G_{n,r}$.  It is easy to verify that $G_{n,r}$ is $K_{r+1}$-free and is not $r$-colourable.  Finally, for $n\leq r+2$, no graph on $n$ vertices has the required properties. Note that in general the extremal graphs are not unique; we shall discuss this later. 

In the proof of Theorem~\ref{thm1}, we will want to apply the \emph{Zykov symmetrization}, defined as follows. Given a graph~$G$ and independent vertices $u$,~$v \in V(G)$, define $Z_{u,v}(G)$ to be the graph obtained by replacing $u$ with a twin of~$v$. That is, we delete all edges incident to~$u$ and insert edges between $u$ and the neighbours of~$v$ instead. It is easy to see that $\omega(Z_{u,v}(G))=\omega(G\setminus \left\lbrace u \right\rbrace )$ and $\chi(Z_{u,v}(G))=\chi(G\setminus \left\lbrace u \right\rbrace )$, and, as a consequence,
\begin{equation}\label{eq:ZykovOmega}
\omega(G)-1\leq \omega\bigl(Z_{u,v}(G)\bigr)\leq \omega(G)
\end{equation}
and 
\begin{equation}\label{eq:ZykovChi}
\chi(G)-1\leq \chi\bigl(Z_{u,v}(G)\bigr)\leq \chi(G).
\end{equation}

If $\deg(u) < \deg(v)$, replacing $G$ with $Z_{u,v}(G)$, increases $e(G)$, does not increase $\omega(G)$ and decreases $\chi(G)$ by at most~$1$. Similarly, if $\deg(u) = \deg(v)$, then we may apply either $Z_{u,v}$ or $Z_{v,u}$, with the same effect on $\omega(G)$ and $\chi(G)$, while keeping $e(G)$ unchanged. Let us call the Zykov symmetrization~$Z_{u,v}$ \emph{increasing} or an \emph{IZS} if $\deg(u)\leq \deg(v)$. The following lemma is due to Zykov himself \cite{Zy} and leads to his well-known proof of Tur\'{a}n's theorem. For the sake of self-containment we shall recall its short proof here.

\begin{lem}\label{Zykov}
If $\omega(G)\leq r$ then there exists a sequence of increasing Zykov symmetrizations transforming $G$ into a complete $s$-partite graph for some $s\leq r$. 
\end{lem}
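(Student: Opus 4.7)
The plan is to proceed by induction on $n=\abs{V(G)}$. For the inductive step, I would fix a vertex $v\in V(G)$ of maximum degree, denote $U=V(G)\setminus N[v]$, and first apply the sequence of symmetrizations $Z_{u,v}$ for every $u\in U$ in turn. Because $v$ has maximum degree initially, each such $Z_{u,v}$ is an IZS; moreover, after each operation the newly modified vertex $u$ becomes a twin of $v$ with $\deg(u)=\deg(v)$, while $\deg(v)$ itself is unchanged (since $u\notin N(v)$) and no other vertex's degree can rise above $\deg(v)$. After running through all of $U$, every vertex of $\{v\}\cup U$ has neighbourhood exactly $N(v)$, so $\{v\}\cup U$ is an independent set of twins, each adjacent to every vertex of $N(v)$.

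Next I would apply the induction hypothesis to the induced subgraph $G'=G[N(v)]$. Since any clique of $G'$ together with $v$ forms a clique of $G$, we have $\omega(G')\leq r-1$, so by induction there is a sequence of IZS operations, performed internally to $G'$, that turns $G'$ into a complete $s'$-partite graph with $s'\leq r-1$. I would then lift this sequence to the full graph. Combined with the previous phase, the final result is that $N(v)$ forms a complete $s'$-partite graph and every vertex of $\{v\}\cup U$ is adjacent to all of $N(v)$, so the whole graph is complete $(s'+1)$-partite with $s'+1\leq r$, as required. The base case $n=1$ is trivial.

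The main point to verify---and the only non-routine step---is that running the inductive sequence inside $G'$ does not disturb the structure outside and genuinely consists of IZS in $G$, not merely in $G'$. For the first part: when one applies $Z_{u_1,u_2}$ with $u_1,u_2\in N(v)$ non-adjacent, the new neighbourhood of $u_1$ becomes $N_G(u_2)$, which contains all of $\{v\}\cup U$ (since those vertices are adjacent to every vertex of $N(v)$, including $u_2$), so the adjacencies between $N(v)$ and $\{v\}\cup U$ remain complete and $\{v\}\cup U$ stays independent. For the second part: for every $u\in N(v)$ one has $\deg_G(u)=\deg_{G'}(u)+\abs{\{v\}\cup U}$, so inequalities of degrees are preserved between $G'$ and $G$, and an IZS in $G'$ is automatically an IZS in $G$. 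These two observations together make the induction go through.
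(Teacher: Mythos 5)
Your proof is correct, but it follows a genuinely different route from the paper's. The paper argues globally: it observes that the twin relation partitions $V(G)$ into twin classes, that any two twin classes induce either a complete or an empty bipartite graph, and that in the empty case one can merge the two classes by IZS (symmetrizing the smaller-degree class onto the larger, recomputing degrees after each merge). Since the number of twin classes strictly decreases, the process terminates at a complete $s$-partite graph, and $s\leq r$ because $\omega$ never increases along the way. No induction and no distinguished vertex are needed. Your argument instead is the classical Zykov/Motzkin--Straus induction on $n$: pick a vertex $v$ of maximum degree, symmetrize all non-neighbours onto $v$, recurse inside $G[N(v)]$ (where the clique number drops by one), and lift the inner symmetrizations. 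Both proofs are sound. The trade-off is roughly this: the paper's proof is shorter and makes the twin-class structure---which is the recurring theme of the paper---completely transparent, at the cost of a small bookkeeping point (re-choosing the direction of symmetrization after each merge, since degrees in other classes can shift). Your proof avoids that by anchoring everything to a single max-degree vertex, but then has to do the lifting verification you flag at the end---correctly, since that is exactly where a careless version of this induction would break.

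One very minor nitpick: your parenthetical claim that ``no other vertex's degree can rise above $\deg(v)$'' during the first phase is not needed and is not actually true (vertices in $N(v)$ can gain degree as the $u\in U$ are symmetrized); what matters, and what you do use, is only that $\deg(v)$ is unchanged and that each $u\in U$ still has $\deg(u)\leq\deg(v)$ when its turn comes, which holds because the earlier symmetrizations can only remove edges inside $U$.
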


\begin{proof}
The transformation $Z_{u,v}$ turns $u$ into a twin of~$v$. Note that `twins' are an equivalence relation, giving rise to \emph{twin classes}. Every pair of twin classes forms either an empty or a complete bipartite graph. In the former case we can repeatedly apply an IZS and merge the two classes into one. We keep doing so until there are no missing edges between vertices from different twin classes, after which the obtained graph $G'$ is complete $s$-partite, where $s$ is the number of twin classes. By~\eqref{eq:ZykovOmega}, $\omega(G')\leq \omega(G)\leq r$, so we must have $s\leq r$, which proves the lemma.
\end{proof}

\begin{proof}[Proof of Theorem \ref{thm1}.]
We will show that any non-$r$-colourable, $K_{r+1}$-free graph~$G$ must have at most as many edges as $G_{n,r}$.

\textbf{Step 1:} We first use the Zykov symmetrization. Recall that the initial graph~$G$ satisfies $\chi(G)>r$, and by~\eqref{eq:ZykovChi} with each IZS the chromatic number decreases by at most~$1$. Therefore, by Lemma~\ref{Zykov}, it suffices to prove that $e(G)\leq h(n,r)$ for every $G$ such that $\omega(G)\leq r$, $\chi(G)=r+1$ and $\chi(Z_{u,v}(G))=r$ for some increasing $Z_{u,v}$. The latter implies that $\chi(G \setminus \left\lbrace u \right\rbrace )=r$, which in turn means that $G$ can be properly $(r+1)$-coloured such that $u$ is the only vertex with its colour.

So from now on let us assume that $V(G)$ can be split into $r+1$ independent sets $V_1, \dots, V_r$, and $\left\lbrace u \right\rbrace$. Observe that for each~$i$, $u$ must have a neighbour~$v_i \in V_i$, for otherwise we could add $u$ to some $V_i$ and obtain an $r$-colouring of~$G$. Furthermore, at least~one edge between some $v_i$ and $v_j$ is missing, for otherwise the $v_i$ and $u$ would form a copy of~$K_{r+1}$.

\textbf{Step 2:} We now apply a series of edge switches as follows. If two neighbours of~$u$ in different partition classes, say $v\in V_i$ and $w\in V_j$, are not adjacent and $u$ has more than~one neighbour in either $V_i$ or $V_j$, say $v'\in V_i$, then we remove the edge~$uv$ and add the edge~$vw$. At this point it does not matter what happens to $\chi(G)$. However, it is crucial that after this switch the resulting graph $\tilde{G}$ is $K_{r+1}$-free.  Indeed, because $G\left[V\setminus\left\lbrace u \right\rbrace \right]$ is $r$-partite, any copy~$F$ of~$K_{r+1}$ in $\tilde{G}$ must contain $u$.  This means that $F$ cannot contain $v$, but in this case a copy of~$K_{r+1}$ would already be present in $G$, a contradiction.

Continue the switches for as long as possible; the procedure will terminate since the degree of~$u$ decreases after each switch. Once no more switches are possible, we end up with a graph~$G'$ such that $u$ has precisely~one neighbour in two of the partition classes, say $v_1\in V_1$ and $v_2\in V_2$, with no edge between $v_1$ and~$v_2$.

\textbf{Step 3:} Now add all missing edges between every $V_i$ and $V_j$ with $i\neq j$ except for $v_1v_2$, and between $u$ and every $V_i$ with $i\geq 3$. The obtained graph is $(r+1)$-chromatic and contains no $K_{r+1}$. Moreover, its size is maximised if the sizes of $V_1, \dots, V_r$ are as close as possible, resulting in $e(G)=h(n,r)$. 
\end{proof}

As was mentioned before, in general the extremal example is not unique. By examining our proof of Theorem \ref{thm1}, the family of extremal examples can be easily characterised (the extremal graphs were also characterised in some of the previous proofs). We let

\begin{equation}\label{eq:sdef}
	s = s(n, r) = \lfloor n/r \rfloor.
\end{equation}
Given $\ell$, $1 \leq \ell \leq s - 1$, let $\Ggraph{r}{\ell}$ be the graph obtained from $G_{n,r}$ as follows: let $W \subset V_1$ with $\abs{W} = \ell$, and, for each $w \in W$, add the edge~$uw$ and remove the edge $v_2w.$  (Note that $G_{n,r} = \Ggraph{r}{1}$.)  If $n = kr + 2$ for some $k \geq 2$, then $V_1$ and $V_2$ have different sizes.  Without loss of generality, let $\abs{V_1} = \abs{V_2} + 1 = k + 1$.  In this case, we may also modify $G_{n,r}$ by connecting $u$ to a set~$W' \subset V_2$ with $\abs{W'} = \ell$ and disconnecting $v_1$ from all elements of~$W'$.  Let $\Gprimegraph{r}{\ell}$ denote the resulting graph and observe that $\Gprimegraph{r}{\ell} \ncong \Ggraph{r}{\ell}$.  Note that for $1 \leq \ell \leq s - 1$, both $\Ggraph{r}{\ell}$ and $\Gprimegraph{r}{\ell}$ are $(r + 1)$-chromatic and $K_{r+1}$-free.

We shall show that these are the only extremal graphs.

\begin{thm}\label{thm2}
Let $r \geq 2$ and $n \geq r + 3$.
Let $h(n, r)$ and $s$ be as in~\eqref{eq:hdef} and~\eqref{eq:sdef}, respectively. If $G$ is a $K_{r+1}$-free graph of order~$n$ and size~$h(n,r)$ that is not $r$-colourable, then there exists some $1 \leq \ell \leq s - 1$ such that $G \cong \Ggraph{r}{\ell}$, or, if $n = kr + 2$ for some $k \geq 2$, then there exists some $1 \leq \ell \leq s - 1$ such that either $G \cong \Ggraph{r}{\ell}$ or $G \cong \Gprimegraph{r}{\ell}$.
\end{thm}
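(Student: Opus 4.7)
I would track the proof of Theorem~\ref{thm1} and enforce equality at every stage. Since $e(G) = h(n,r)$ and that proof bounds the edge count by $h(n,r)$ without ever decreasing it, every step must preserve the edge count when applied to $G$. In Step~1 (Zykov symmetrization), the only edge-preserving IZS is $Z_{u,v}$ with $\deg(u) = \deg(v)$; in Step~2, the edge switches preserve edges automatically; and in Step~3, no missing edges may be added. The resulting graph $G^{**}$ therefore has a vertex partition $V(G^{**}) = V_1 \cup \dots \cup V_r \cup \{u\}$ into independent sets, with all edges between distinct $V_i, V_j$ present except the single missing edge $v_1 v_2$, and $u$ adjacent exactly to $v_1$, $v_2$, and $V_3 \cup \dots \cup V_r$. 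The identity $e(G^{**}) = h(n,r)$ forces $|V_1|$ and $|V_2|$ to be as small as possible in the Tur\'an partition of $n-1$, identifying $G^{**}$ with $G_{n,r}$.

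Next I would reverse the switches of Step~2 to recover $G^*$ (the graph just before the switches). Each reverse switch adds a new $u$-neighbor $w \in V_1 \setminus \{v_1\}$ while removing the edge $v_2 w$; after $\ell - 1$ such reverse switches we obtain $\Ggraph{r}{\ell}$. In the exceptional case $n = kr + 2$, where $V_1$ and $V_2$ end up with different sizes, the analogous reverse switches into $V_2$ (adding $u$-neighbors in $V_2$ and disconnecting $v_1$) yield $\Gprimegraph{r}{\ell}$; one checks this is non-isomorphic to any $\Ggraph{r}{\ell'}$ precisely because of the size asymmetry.

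Finally I would reverse the Step~1 IZS sequence from $G$ to $G^*$. Each such IZS was degree-preserving and preserved $\chi = r + 1$ along the way. The main obstacle is a finite enumeration of the degree-equal independent pairs in $\Ggraph{r}{\ell}$: by listing the twin classes ($W$, $V_1 \setminus W$, $\{v_2\}$, $V_2 \setminus \{v_2\}$, $V_j$ for $j \geq 3$, and $\{u\}$) together with their degrees, one verifies that, apart from IZS between already-twin vertices (which act trivially), the only degree-preserving IZS preserving $\chi = r + 1$ are the swaps $Z_{w, v}$ with $w \in W$ and $v \in V_1 \setminus W$ (and the symmetric ones in $V_2$ when applicable), which transform $\Ggraph{r}{\ell}$ into $\Ggraph{r}{\ell \pm 1}$; every other candidate degree-preserving IZS on $\Ggraph{r}{\ell}$ strictly drops the chromatic number to $r$ and so cannot occur in the sequence leading from $G$ to $G^*$. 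Consequently the entire IZS sequence stays within the family $\{\Ggraph{r}{\ell}\} \cup \{\Gprimegraph{r}{\ell}\}$, and $G$ is isomorphic to one of its members.
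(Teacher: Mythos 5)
Your high-level strategy — track the proof of Theorem~\ref{thm1}, enforce equality of the edge count at every stage, and reverse each step — is exactly the approach the paper takes, so the skeleton is right. However, your account of the Step~1 reversal contains a substantive error. You claim that the invalid reversals ``strictly drop the chromatic number to~$r$.'' This cannot happen. Since $\chi(Z_{u,v}(G)) = \chi(G \setminus \{u\}) \leq \chi(G)$, any graph~$G$ from which a degree-preserving IZS reaches $\Ggraph{r}{\ell}$ must satisfy $\chi(G) \geq \chi(\Ggraph{r}{\ell}) = r + 1$; the chromatic number of a preimage never falls below~$r+1$. The obstruction that actually rules out the unwanted inverse symmetrizations is that they would create a copy of~$K_{r+1}$, i.e.\ they raise $\omega$ above~$r$, contradicting $K_{r+1}$-freeness. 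Your framing also blurs the distinction between applying an IZS \emph{to} $\Ggraph{r}{\ell}$ (forward, irrelevant) and characterizing the \emph{preimages} of $\Ggraph{r}{\ell}$ under a degree-preserving IZS (what is actually needed); the paper introduces the correct notion of an ``inverse symmetrization'' for this purpose, which is not itself an IZS.

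The Step~2 reversal is also underspecified. You describe the reverse switches into $V_1 \setminus \{v_1\}$ and into $V_2 \setminus \{v_2\}$ as yielding the two families, which is right, but you leave unaddressed precisely the cases that make the classification nontrivial: (i) one cannot perform reverse switches simultaneously into $V_1 \setminus \{v_1\}$ and $V_2 \setminus \{v_2\}$, since that creates a $K_{r+1}$; (ii) one cannot connect $u$ to all of $V_1$ (or all of $V_2$), since the graph then becomes $r$-colourable; and (iii) one cannot use $v \in V_j$ for $j \geq 3$, which either creates a $K_{r+1}$ or (when $\abs{V_j}=1$, hence $n \leq 2r$) makes the graph $r$-colourable. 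Note that in this step, unlike Step~1, \emph{both} mechanisms ($K_{r+1}$ appearing and $\chi$ dropping) genuinely occur, and the argument must separate them. As a blind plan the proposal points in the right direction, but the ``one verifies'' steps are where the real work lies, and the stated reason for eliminating bad reversals in Step~1 is false.
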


\begin{proof}[Proof of Theorem \ref{thm2}.]
In the proof of Theorem~\ref{thm1}, we provided an algorithm for transforming any non-$r$-colourable $K_{r+1}$-free graph into the graph~$G_{n,r}$ without decreasing its size. In order to classify the extremal graphs, we must examine the procedure in the case where the size of the graph never increases.

Let $G$ be as in the statement of the theorem.  Because $e(G)$ is maximal, we may assume that no edges were added to~$G$ during Step~3 of the algorithm.  Let us therefore consider Step~2 of the construction. Because we did not need to add any edges to~$G$ in Step~3, we may assume that the graph~$G'$ obtained from having done the switches in Step~2 is isomorphic to~$G_{n,r}$.  Now, if $G$ was transformed into $G'$ by switches, then we should be able to perform a series of inverse switches to transform $G'$ into $G$. To perform an inverse switch, we need a pair~$(v,w)$ with $v\in V_i$ and $w\in V_j$, $i\neq j$, such that $uv$,~$vw \in E(G')$ and $uw \notin E(G')$.

By the definition of~$G_{n,r}$, we must have $w \in V_1 \setminus \{v_1\}$ or $w \in V_2 \setminus \{v_2\}$.   So, if we let $v = v_2$ and repeatedly choose $w \in V_1 \setminus \{v_1\}$, then after each inverse switch, we obtain the graph $\Ggraph{r}{\ell}$ for some~$\ell$.  Similarly, if we let $v = v_1$ and repeatedly choose $w \in V_2 \setminus \{v_2\}$, then after each inverse switch, the resulting graph is isomorphic either to~$\Ggraph{r}{\ell}$ or (if $n = kr + 2$ for some $k \geq 2$) to $\Gprimegraph{r}{\ell}$ for some~$\ell$.  This means that the graph remains $K_{r+1}$-free and is not $r$-colourable throughout this process.

Now we show that performing any other inverse switch creates a graph that does not satisfy one of the hypotheses of the theorem.  First, we may not connect $u$ to vertices in both $V_1 \setminus \{v_1\}$ and $V_2 \setminus \{v_2\}$: if, for some $w_1 \in V_1 \setminus \{v_1\}$ and $w_2 \in V_2 \setminus \{v_2\}$, we added the edges $uw_1$ and~$uw_2$ and removed the edges $v_1 w_2$ and~$v_2 w_1$, then for $j = 3, \dots, r$, there exist $v_j \in V_j$ such that $u$, $w_1$, $w_2$, $v_3$, \dots, $v_r$ would induce a copy of~$K_{r+1}$.  Second, we may not connect $u$ to all of~$V_1$ (respectively, to all of $V_2$), because the resulting graph would be $r$-colourable: we could give colour~$2$ to~$u$ and give colour~$1$ to~$v_2$ (respectively, to~$v_1$).

Finally, we may not have $v \in V_j$ for any $j \geq 3$.  Indeed, suppose that for some $w \in V_1 \setminus \{v_1\}$ and $v \in V_3$, say, we added the edge~$uw$ and removed the edge~$wv$.  If $V_3$ contains a vertex~$x$ besides $v$, then there exist vertices $v_i \in V_i$, $i = 4$, \dots, $r$, such that $u$, $w$, $v_2$, $x$, $v_4$, \dots, $v_r$ would induce a copy of~$K_{r+1}$.  (This must be the case if $n \geq 2r + 1$.)  If $\abs{V_3} = 1$, then we must have $n \leq 2r$, and in particular, we must have $\abs{V_1} = \abs{V_2} = 2$.  In this case, the resulting graph would be $r$-colourable: letting $y$ be the vertex of~$V_2$ that is different from $v_2$, we could give colour~$1$ to $u$ and $y$, colour~$2$ to $v$ and~$w$, colour~$3$ to $v_1$ and $v_2$, and, for $i = 4$, \dots, $r$, colour~$i$ to all vertices of~$V_i$.

Hence we may assume that $G$ is transformable into some $\Ggraph{r}{\ell}$ or some $\Gprimegraph{r}{\ell}$ (where $1 \leq \ell \leq s - 1$) by a series of IZS's in Step~1 of the algorithm.  In what follows, we assume that $G$ can be transformed into $G'' \cong \Ggraph{r}{\ell}$ for some~$\ell$; the other case is nearly identical.  Observe that because $e(G)$ is maximal, each IZS leaves the number of edges in the graph unchanged, meaning that at each step we symmetrize two vertices of equal degrees. Again, reversing the procedure, $G''$ is transformable into $G$ by a series of inverse symmetrizations: take two twins $x$ and~$y$, remove $x$ and add a new vertex $x'$ such that $\deg(x')=\deg(y)$, $x' \not \sim y$ and $N(x') \neq N(y)$.  Letting $W = N_{G''}(u) \cap V_1$, it is easy to see that the only twins in $G''$ are pairs of vertices from $W$, pairs of vertices from $V_1 \setminus W$, pairs of vertices from $V_2 \setminus \{v_2\}$, and pairs of vertices from some class $V_i
 $ with $i \geq 3$.  (If $|V_1|=|V_2|=1$, then $v_1$ and $v_2$ are twins, but this contradicts our assumption that $n \geq r + 3$.)

If $x$, $y \in V_1 \setminus W$ and $x'$ is a twin of some $w \in W$, then the resulting graph is isomorphic to~$\Ggraph{r}{\ell + 1}$.  Similarly, if $x$, $y \in W$ and $x'$ is a twin of some $w \in V_1 \setminus W$, then the resulting graph is isomorphic to $\Ggraph{r}{\ell - 1}$.  (In this case, if $\ell = 2$ and $x'$ is a twin of $v_2$, then the resulting graph is isomorphic to either $\Ggraph{r}{2}$ or $\Gprimegraph{r}{2}$.)  Again, at each stage, the graph is $K_{r+1}$-free and is not $r$-colourable.  Any other inverse symmetrization would create a copy of~$K_{r+1}$: in all other cases, either $x'$ has neighbours in all of the $V_j$, all of which are adjacent to one another; or $x'$ is adjacent to~$u$ and to vertices in all but one of the $V_j$, all of which are adjacent to one another and to~$u$; or both.

Thus, our extremal graph~$G$ must be either of the form~$\Ggraph{r}{\ell}$ or of the form~$\Gprimegraph{r}{\ell}$ for some $1 \leq \ell \leq s - 1$.  This completes the proof.
\end{proof}

\section{Clique-saturated graphs}\label{se:CliqueSat}

In this section we shall prove a number of results about stability of $(r+1)$-saturated graphs near the Tur\'{a}n threshold, including Theorems \ref{thm:SimpleBound} and~\ref{thm:TwinFreenlogn}.

We shall make frequent use of the following result of Andr\'asfai, Erd\H{o}s and S\'os~\cite{AES}.

\begin{theorem}\label{thm:AES}
Let $r \geq 2$.  If a graph~$G$ on $n$ vertices is $K_{r+1}$-free and not $r$-colourable, then there exists $v \in V(G)$ such that
\[
\deg(v) \leq \dfrac{3r-4}{3r-1} n.
\]
\end{theorem}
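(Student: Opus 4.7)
The result is quoted from Andrásfai--Erd\H{o}s--S\'os, so my proof proposal sketches one way to prove it. I would prove the contrapositive: every $K_{r+1}$-free graph $G$ on $n$ vertices with $\delta(G) > \frac{3r-4}{3r-1} n$ is $r$-colourable. The plan is induction on $r$.

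For the base case $r = 2$ one needs to show that a triangle-free graph $G$ with $\delta(G) > 2n/5$ is bipartite. I would argue directly: fix an edge $uv$; triangle-freeness yields $N(u) \cap N(v) = \emptyset$, hence $|N(u) \cup N(v)| > 4n/5$ and $|V(G) \setminus (N(u) \cup N(v))| < n/5$. For any other vertex $w$, the bound $\deg(w) > 2n/5$ together with the independence of $N(w)$ (itself a consequence of triangle-freeness) forces every neighbour of $w$ to lie in $N(u) \cup N(v)$, and a short case analysis then places $w$ entirely on one ``side'' of $\{u,v\}$. Iterating over all vertices yields the desired bipartition.

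For the inductive step, assume the statement for $r - 1$ and let $G$ satisfy the hypotheses for $r$. I would pick a vertex $v$ of maximum degree, set $H := G[N(v)]$ (which is $K_r$-free on $m := \deg(v)$ vertices), and establish the induction-ready bound $\delta(H) > \frac{3(r-1)-4}{3(r-1)-1} m$ via a common-neighbourhood double count that uses both the global minimum degree hypothesis on $G$ and the $K_{r+1}$-freeness. The induction hypothesis then gives an $(r-1)$-colouring of $H$, which I would extend to an $r$-colouring of $G$ by placing all of $V(G) \setminus N(v)$ in a single new colour class; the extension is proper because any edge inside $V(G) \setminus N(v)$ would, together with an $(r-1)$-clique from the shared neighbourhood of its endpoints inside $N(v)$, force a copy of $K_{r+1}$.

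The main obstacle is the sharpness of the constant $\frac{3r-4}{3r-1}$, which is witnessed by suitable blow-ups of Andr\'asfai-type graphs. The inductive step therefore leaves essentially no slack, so the common-neighbourhood count must be tight: one has to exploit both $|N(x) \cap N(y)| \geq \deg(x) + \deg(y) - n$ for an edge $xy$ inside $N(v)$ and the fact that $N(x) \cap N(y) \cap N(v)$ is $K_{r-1}$-free, and balance these against the hypothesis to hit the exact threshold. Getting this balance right — rather than the colouring extension, which is the soft part — is what makes the theorem non-trivial despite its clean statement.
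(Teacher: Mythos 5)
The paper quotes Theorem~\ref{thm:AES} from Andr\'asfai--Erd\H{o}s--S\'os without giving a proof, so there is no in-paper argument to compare against; what follows is an assessment of your sketch on its own terms.

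Your minimum-degree calculation for the induction is in fact correct and tight: with $v$ of maximum degree and $m = \deg(v)$, any $w \in N(v)$ has $\deg_{G[N(v)]}(w) \geq \deg(w) + m - n > m - \tfrac{3}{3r-1}n$, and this exceeds $\tfrac{3(r-1)-4}{3(r-1)-1}m$ precisely when $m > \tfrac{3r-4}{3r-1}n$, which your hypothesis supplies. The genuine gap is in the colouring extension. You propose to give all of $V(G) \setminus N(v)$ a single new colour, and justify this by saying that any edge $xy$ inside $V(G) \setminus N(v)$ would ``together with an $(r-1)$-clique from the shared neighbourhood of its endpoints inside $N(v)$, force a copy of $K_{r+1}$.'' But for any edge $xy$ of $G$, the set $N(x) \cap N(y)$ is $K_{r-1}$-free, \emph{precisely because} $G$ is $K_{r+1}$-free: an $(r-1)$-clique there would, together with $x$ and $y$, already be a $K_{r+1}$. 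So the $(r-1)$-clique you invoke can never exist, and the implication you want (edge in $V \setminus N(v) \Rightarrow K_{r+1}$) never fires. In other words, you have stated a correct implication in the wrong direction and cannot use it to deduce that $V(G) \setminus N(v)$ is independent.

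Nor is that independence claim true in general as a consequence of the hypothesis. In the tight example $T_{n-m,r-2} + $ (a balanced blow-up of $C_5$ of order $m = 5n/(3r-1)$), all vertices have degree exactly $\tfrac{3r-4}{3r-1}n$, and for a maximum-degree vertex $v$ inside the $C_5$-blow-up part, $V(G) \setminus N(v)$ contains two of the five $C_5$-classes that are mutually adjacent, so it is far from independent. Since this example sits exactly at the threshold, any proof of the strict-inequality statement has to exploit the slack carefully; ``place $V \setminus N(v)$ in one colour class'' discards too much. A working version of this inductive approach has to show instead how to extend the $(r-1)$-colouring of $G[N(v)]$ to the (possibly non-independent) remainder --- for instance by arguing that each $u \notin N(v) \cup \{v\}$ meets all but one of the $r-1$ colour classes of $N(v)$ and colouring $u$ with the missing colour --- which is a different and considerably more delicate argument than what you have written. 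The base case has a similar soft spot (it does not follow from the degree bound alone that every neighbour of $w$ lies in $N(u) \cup N(v)$), but the inductive-step extension is the decisive gap.
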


We shall also often use the following immediate corollary of Theorem~\ref{thm:AES}.

\begin{corollary}\label{cor:AEScor}
There exists a function~$g(r,c)$ such that the vertex set of every $K_{r+1}$-free graph~$G$ with $e(G) \geq t_{n,r}-cn$ can be split into a set~$F$ with $|F|\leq g(r,c)$ and an $r$-partite graph on $V\setminus F$.
\end{corollary}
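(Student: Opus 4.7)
The plan is to build $F$ greedily by iterating Theorem~\ref{thm:AES}. Set $G_0 := G$, $n_0 := n$, and $F := \emptyset$. So long as $G_i$ is not $r$-colourable, Theorem~\ref{thm:AES} yields a vertex $v_i \in V(G_i)$ with $\deg_{G_i}(v_i) \leq \frac{3r-4}{3r-1} n_i$; I delete $v_i$, place it in $F$, and continue with $G_{i+1} := G_i - v_i$ and $n_{i+1} := n_i - 1$. Every $G_i$ is still $K_{r+1}$-free, so AES keeps applying until the iteration halts with $V(G) \setminus F$ inducing an $r$-partite subgraph, as required.

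To bound $|F|$ I would track the Tur\'an deficit $D_i := t_{n_i, r} - e(G_i)$, which is non-negative by Tur\'an's theorem and satisfies $D_0 \leq cn$. Combining the elementary identity $t_{m,r} - t_{m-1,r} = m - 1 - \lfloor (m-1)/r \rfloor \geq \frac{(m-1)(r-1)}{r}$ with the arithmetic fact $\frac{r-1}{r} - \frac{3r-4}{3r-1} = \frac{1}{r(3r-1)}$, one obtains
\begin{equation*}
D_{i+1} = D_i - \bigl(t_{n_i, r} - t_{n_i - 1, r}\bigr) + \deg_{G_i}(v_i) \leq D_i - \frac{n_i}{r(3r-1)} + \frac{r-1}{r}.
\end{equation*}
Thus, while $n_i \geq n/2$, each step shrinks the deficit by at least $\frac{n}{2r(3r-1)} - 1$.

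Since $D_i \geq 0$ throughout, the iteration must halt in at most $O(r^2 c)$ steps. This bound is self-consistent provided $n$ exceeds some threshold $n^\ast(r,c)$ (depending only on $r$ and~$c$), in which case $|F| \leq g(r,c)$ for a suitable function~$g$. For $n \leq n^\ast(r, c)$ we simply take $F = V(G)$ and enlarge $g(r,c)$ to absorb this finite case. The only mild obstacle is checking that the $O(1)$ rounding errors arising from the $\lfloor\,\cdot\,\rfloor$ in $t_{m,r} - t_{m-1,r}$ do not spoil the linear-in-$n$ drop per step; this is automatic once $n$ is larger than the threshold, since then the number of iterations is a small fraction of~$n$ and $n_i \geq n/2$ is maintained throughout. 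In essence, the corollary is a direct potential-function argument driven by Theorem~\ref{thm:AES}.
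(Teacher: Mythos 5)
Your proof is correct.  It is an iterative version of the paper's argument, and both hinge on Theorem~\ref{thm:AES} plus the same basic edge count per removal.  The paper defines $F$ in one shot as the set of all vertices of $G$-degree at most $\frac{3r-4}{3r-1}n$, bounds $|F|$ by the same kind of deficit count you use, and then invokes Theorem~\ref{thm:AES} once on $G[V\setminus F]$; you instead build $F$ greedily, applying Theorem~\ref{thm:AES} to the \emph{current} subgraph $G_i$ at each step and tracking the Tur\'an deficit $D_i = t_{n_i,r}-e(G_i)$ as a potential.  The deficit calculation, the verification that $\frac{r-1}{r}-\frac{3r-4}{3r-1}=\frac{1}{r(3r-1)}$, and the handling of small $n$ are all sound, and $|F| = O_{r,c}(1)$ follows from $D_0\leq cn$, $D_i\geq 0$, and the per-step drop of $\Omega_r(n)$ while $n_i\geq n/2$.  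One advantage of your iterative formulation is that it sidesteps a small wrinkle the paper glosses over: after removing the low-$G$-degree set $F$ all at once, a vertex could in principle have low degree \emph{within} $G[V\setminus F]$ while having high degree in $G$, so the one-shot version really needs a slightly padded threshold (or the observation $|F|=o(n)$) to close the gap; by re-applying AES to $G_i$ directly at each step, your argument needs no such adjustment.
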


\begin{proof}
Take $F$ to be the set of vertices of degree at most~$\frac{3r-4}{3r-1} n$, which must be of bounded size by the condition $e(G) \geq t_{n,r}-cn$. The remaining vertices induce, by Theorem~\ref{thm:AES}, an $r$-partite graph. 
\end{proof}

\subsection{Finite-size reductions}\label{se:BoundedBlowup}

The unique largest $(r+1)$-saturated graph, the Tur\'an graph~$T_{n,r}$, is a balanced blow-up of~$K_r$.  Moreover, by Theorem \ref{thm1} a $K_{r+1}$-free graph~$G$ that has more than~$t_{n,r}-n/r+1$ edges is $r$-chromatic.  Hence, if $G$ is $(r+1)$-saturated, then all edges between different partition classes must be present, so $G$ is complete $r$-partite (possibly with unbalanced colour classes), i.e.~it is another blow-up of~$K_r$.  It is natural to ask: if we continue to decrease $e(G)$, how long will $G$ remain a blow-up of a finite order graph?  In other words, what is the largest function~$f_r(n)$ such that every $(r + 1)$-saturated graph with at least~$t_{n,r} - f_r(n)$ edges is a blow-up of a graph whose order does not depend on $n$?

We begin by proving Theorem~\ref{thm:SimpleBound} in the case $r = 2$.

\begin{thm}\label{thm3saturated}
For every $c \geq 0$ there exists $m_2(c)$ such that every $3$-saturated graph~$G$ on $n$ vertices with $e(G)> t_{n,2}-cn$ is a blow-up of some (triangle-free) graph~$H$ with $|H|\leq m_2$. 
\end{thm}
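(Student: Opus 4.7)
The plan is to apply Corollary \ref{cor:AEScor} to isolate a bounded ``exceptional'' set $F$ of vertices outside of which $G$ looks bipartite, and then exploit triangle-saturation to force the structure of the bipartite part to be determined by $F$-neighbourhoods. The bound on the number of resulting twin classes will produce the graph $H$ by quotienting.

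First, I would apply Corollary \ref{cor:AEScor} to decompose $V(G) = F \cup A \cup B$, where $|F| \leq g(2, c) =: f_0$ and $A$, $B$ are independent sets in $G$. I would then partition the vertices of $A$ into equivalence classes according to the trace $N(v) \cap F$; this yields at most $2^{f_0}$ classes in $A$, and likewise at most $2^{f_0}$ classes in $B$.

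The heart of the proof is the following claim: \emph{if $u$, $v \in A$ satisfy $N(u) \cap F = N(v) \cap F$, then $u$ and $v$ are twins in $G$.} Since $A$ is independent and $N(u), N(v) \subseteq B \cup F$, it suffices to check $N(u) \cap B = N(v) \cap B$. Suppose instead that some $b \in B$ has $ub \in E(G)$ but $vb \notin E(G)$. By triangle-saturation, the non-edge $vb$ has a common neighbour $w$. As $A$ and $B$ are independent, $w$ lies in neither (it is adjacent to $v \in A$ and to $b \in B$), so $w \in F$. But then $w \in N(v) \cap F = N(u) \cap F$, and $u$, $w$, $b$ would form a triangle, contradicting triangle-freeness. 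The symmetric statement holds for $B$.

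With this claim in hand, the twin relation on $V(G)$ partitions the vertex set into at most $2 \cdot 2^{f_0} + f_0$ classes (each vertex of $F$ either merges with an $A$- or $B$-class or contributes at most one extra class). The quotient graph $H$ inherits triangle-freeness from $G$ and $G$ is by definition a blow-up of $H$, so setting $m_2(c) := 2^{g(2,c)+1} + g(2,c)$ completes the proof. The only non-routine step is the short triangle argument above; its point is that for $r = 2$, common neighbours of non-edges across the bipartition are \emph{forced} to live in the small set $F$, which is exactly what lets the $F$-trace of a vertex determine its entire neighbourhood. This is also where I expect the difficulty to lie in generalising to $r \geq 3$, since for larger $r$ the common neighbour of a non-edge need no longer sit in $F$.
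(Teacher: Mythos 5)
Your proof is correct and takes essentially the same approach as the paper: apply Corollary~\ref{cor:AEScor} to extract a bounded exceptional set, then use $3$-saturation plus triangle-freeness to show that the $F$-trace of a vertex determines its whole neighbourhood. The paper phrases the key step as a full characterisation of adjacency across the bipartition (namely that $u \sim w$ iff their $V_e$-traces are disjoint) rather than as a direct twin argument, but the underlying observation — that the common neighbour witnessing saturation of a non-edge across the two independent sides must lie in the small set — is identical, and the resulting bound $m_2(c) = g(2,c) + 2\cdot 2^{g(2,c)}$ is the same.
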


\begin{proof}
If $G$ is bipartite, then it must be complete bipartite, and we are done.  If $G$ is not bipartite, then by Corollary~\ref{cor:AEScor} it is composed of a large bipartite graph $G_b=(U,W,E_b)$ and an exceptional vertex set~$V_e$ with $|V_e|\leq g(2,c)$. Now, partition the vertices of $U$~and~$W$ according to their $V_e$-neighbourhoods: for every $X\subset V_e$, define 
\[
U_X:=\left\{u\in U \, : \, N_{V_e}(u)=X\right\},
\]
and $W_X$ analogously. Take any $u\in U$ and $w\in W$, and let $X=N_{V_e}(u)$ and $Y=N_{V_e}(w)$, so that $u\in U_X$ and $w\in W_Y$. If $X \cap Y = \emptyset$, then $u$ and $w$ must be adjacent, since $G$ is $3$-saturated. On the other hand, if $X\cap Y \neq \emptyset$, there can be no edge between $u$ and $w$, as it would create a triangle. Hence, the neighbourhoods of $u$~and~$w$ are completely determined by their $V_e$-neighbourhoods, meaning that two vertices $u_1$,~$u_2 \in U_X$ for any given~$X$ are twins (the same holds in $W$). Since there are at most~$2^{|V_e(G)|}$ possible $V_e$-neighbourhoods, we conclude that $G$ has at most
\[
|V_e(G)|+2\cdot 2^{|V_e(G)|}
\]
twin classes.  Thus, the statement of the theorem holds with $m_2(c)=g(2,c)+2\cdot 2^{g(2,c)}$.
\end{proof}

Note that extremal triangle-free, $(\geq\!k)$-chromatic graphs are in particular $3$-saturated. As was mentioned in the Introduction, it is easy to construct a triangle-free, $(\geq\!k)$-chromatic graph with $t_{n,2} - c_k n$ edges.  
Thus, as an immediate corollary of Theorem~\ref{thm3saturated} we obtain Theorem~\ref{thm:kChromaticBounded} (Simonovits' Theorem) for $r=2$.

\begin{corollary}\label{cor:sublinear}
For each $k \geq 2$ there exists a constant $m(k,2)$ such that if $G$ is an extremal triangle-free, $(\geq\!k)$-chromatic graph on $n$ vertices, then $G$ is a blow-up of a graph $G'$ with $|G'|\leq m(k,2)$.
\end{corollary}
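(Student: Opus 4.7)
The plan is to show that an extremal $G$ falls under the hypothesis of Theorem~\ref{thm3saturated}, after which the conclusion is immediate with $m(k,2):=m_2(c_k)$ for an appropriate constant~$c_k$. There are two things to check: that $G$ is $3$-saturated, and that $e(G)\geq t_{n,2}-c_k n$ for some constant~$c_k$ depending only on~$k$.

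For the first point, suppose $G$ is a triangle-free graph of order~$n$ with $\chi(G)\geq k$ and $e(G)$ maximal subject to these constraints. If $G$ were not $3$-saturated, there would exist a non-edge $xy$ whose insertion does not create a triangle; but adding an edge can only raise the chromatic number, so $G+xy$ would still be triangle-free and $(\geq\!k)$-chromatic, contradicting maximality of $e(G)$. Hence $G$ is $3$-saturated.

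For the second point, I would exhibit an explicit lower-bound construction, as sketched in the Introduction. Fix any triangle-free graph~$F$ with $\chi(F)\geq k$ (for instance, a Mycielski graph), and let $\ell=\ell(k):=|F|$. For $n\geq \ell$, the disjoint union of~$F$ and a Tur\'an graph~$T_{n-\ell,2}$ is triangle-free, has chromatic number at least~$k$, and has
\[
e(F)+t_{n-\ell,2}\geq t_{n,2}-\tfrac{\ell n}{2}
\]
edges (the loss from $t_{n,2}$ to $t_{n-\ell,2}$ being at most $\ell n/2$). Setting $c_k:=\ell(k)/2+1$, the extremality of~$G$ gives $e(G)\geq t_{n,2}-c_k n$.

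Combining the two points, $G$ satisfies the hypotheses of Theorem~\ref{thm3saturated} with $c=c_k$, so $G$ is a blow-up of some (triangle-free) graph~$G'$ with $|G'|\leq m_2(c_k)$. Setting $m(k,2):=m_2(c_k)$ finishes the proof. I do not anticipate any significant obstacle here: the only step that requires care is verifying that extremality forces saturation, which is routine once one notes that adding edges is monotone for the chromatic number.
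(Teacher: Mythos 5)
Your proposal is correct and follows exactly the same route as the paper: the paper's proof is the single remark that extremal triangle-free $(\geq\!k)$-chromatic graphs are $3$-saturated, plus the disjoint-union construction (from the Introduction) giving $e(G)\geq t_{n,2}-c_k n$, after which Theorem~\ref{thm3saturated} is applied. You have simply spelled out the two observations the paper leaves implicit; there is no gap.
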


The following construction demonstrates that the bound of  Theorem~\ref{thm3saturated} is sharp in the following sense: given a function~$f(n)$ that tends to infinity (no matter how slowly), there exist $3$-saturated graphs~$G$ with $e(G)=t_{n,2}-nf(n)$, yet with an unbounded number of twin classes. 

\begin{example}\label{ex:3satunbounded}
We may assume that $f(n)<\frac{\log_{2}n}{2}$. Let $S$ be a set of $f(n)$ vertices, let $U$ and $W$ be disjoint sets of $2^{f(n)}$ vertices each, and divide the rest of the vertices equally into two sets $U'$ and~$W'$.  Give different vertices of~$U$ distinct neighbourhoods in $S$, and similarly for vertices in $W$: for each $I \subset S$, let $u_I$ be the vertex in $U$ with $N_S(u_I) = I$, and define $w_I$ similarly. Join $u_I$ and $w_J$ if and only if $I$ and $J$ are disjoint. Finally, add all edges between $U'$ and $W'$, between $U'$ and $W$, and between $U$ and $W'$. It is not hard to see that the resulting graph~$G$ is 3-saturated. Also, $G$ has at least~$2^{f(n) + 1} + f(n)$ distinct neighbourhoods. 

Since $f(n)<\frac{\log_{2}n}{2}$, we obtain
\begin{align*}
e(G)&>|U'||W'|+|U'||W|+|U||W'| > t_{n-f(n),2} - 2^{2f(n)} \\ 
    &> t_{n,2} - \dfrac{n f(n)}{2} - 2^{2f(n)} > t_{n,2} - n f(n),
\end{align*}
as claimed.
\end{example}

Now let us consider the case~$r \geq 3$. Perhaps surprisingly, the analogue of Theorem~\ref{thm3saturated} does not hold here, as the following construction shows.

\begin{example}\label{ex:nosmallblowup}
Let $n \in \N$, let $m = (1/2)\log_{2} n$ and let $M = \binom{m}{m/2}$; note that $M<\sqrt{n}$.  
Take the Tur\'{a}n graph $T_{n-1,r}$ and let $V_1$, \dots,~$V_r$ denote its partition classes. Let $W_1 \subset V_1$, $W_2 \subset V_2$ and $W_3 \subset V_3$ with $\abs{W_1} = M$ and $\abs{W_2} = \abs{W_3} = m$. Introduce a new vertex~$v$ to~$G$ and join it to all of the vertices of the $W_i$ and to all of the vertices of $V_j$ for $j \neq \{1,2,3\}$. Remove all edges between different $W_i$. The resulting graph~$G'$ satisfies 
\[
e(G')\geq t_{n-1,r}-2mM - m^2+\left \lfloor\frac{r-3}{r}(n-1)\right \rfloor+M+2m=t_{n,r}-\dfrac{2n}{r}+o(n).
\]

Now we add a matching between $W_2$ and $W_3$.  Also, for each $w\in W_1$ we select a subset~$U_w \subset W_2$ of size~$m/2$ such that different vertices of~$W_1$ receive distinct subsets. Connect $w$ to~$U_w$ in $W_2$ and to~$W_3\setminus N_{W_3}(U_w)$ in $W_3$.  (Observe that we have added only $m + mM = o(n)$ edges.)

It is easy to check that the obtained graph~$G$ is $(r+1)$-saturated. Moreover, no vertices in $W_1$ are twins, so $G$ has an unbounded number of twin classes.
\end{example}

Given $r \geq 3$, let $c_r$ be the supremum of the numbers~$c$ such that every $(r + 1)$-saturated graph~$G$ with $e(G)> t_{n,r}-cn$ has a bounded number of twin classes.  Observe that Theorem~\ref{thm1} and Example~\ref{ex:nosmallblowup} imply that $1/r \leq c_r \leq 2/r$.  We now show that $c_r=2/r$ holds for all $r\geq 3$, and so complete the proof of Theorem~\ref{thm:SimpleBound}.

\begin{thm}\label{thmrsaturated}
For every $r \geq 3$ and every $\varepsilon>0$ there exists $m_r(\varepsilon)$ such that every $(r+1)$-saturated graph~$G$ with $e(G)> t_{n,r}-(2-\varepsilon)n/r$ is a blow-up of some ($K_{r+1}$-free) graph $H$ with $|H|\leq m_r$. 
\end{thm}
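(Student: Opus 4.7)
\medskip

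\noindent\emph{Proof plan.}
The plan is to combine Corollary~\ref{cor:AEScor} with the $(r+1)$-saturation hypothesis to reduce $G$ to a blow-up of a bounded-order graph, by controlling how many distinct neighbourhood patterns an exceptional vertex set can support.

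First I would apply Corollary~\ref{cor:AEScor} with $c = (2 - \varepsilon)/r$ to obtain a decomposition $V(G) = F \sqcup V_1 \sqcup \cdots \sqcup V_r$, where $|F| \leq f := g(r, (2-\varepsilon)/r)$ and each $V_i$ is an independent set. A short edge count, combined with $K_{r+1}$-freeness, then forces $\bigl|\,|V_i| - n/r\,\bigr| = O_{r,\varepsilon}(1)$ for every $i$, which I will use freely below.

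Next I would extract the key structural consequence of $(r+1)$-saturation: for every non-edge $xy$ of $G$ with $x \in V_i$ and $y \in V_j$ ($i \neq j$), the $K_{r-1}$ provided in $N(x) \cap N(y)$ must avoid the independent sets $V_i$ and $V_j$ and hence places at most $r-2$ of its vertices in $V' := V_1 \cup \cdots \cup V_r$, so at least one of them lies in $F$; i.e.\ $N(x) \cap N(y) \cap F \neq \emptyset$. Classifying each $v \in V_i$ by its \emph{$F$-type} $T(v) := N(v) \cap F \subseteq F$ therefore gives at most $2^f$ types per class.

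The core step is to bound, by some $M = M(r, \varepsilon)$, the number of twin subclasses within each $F$-type of each $V_i$. I would argue this by contradiction: suppose some type $T$ of some $V_i$ hosts unboundedly many distinct neighbourhoods as $n \to \infty$. Then a distinguishing pair $v_1, v_2$ of type $T$ differs on some $u \in V_j$, and the saturating clique $K_{r-1} = \{\phi\} \cup K_{r-2}$ for the non-edge $v_2 u$ has $\phi \in T \cap N(u)$ together with a $V'$-clique $K_{r-2}$ that picks a representative from each $V_k$, $k \neq i, j$, each lying in $N(\phi) \cap N(v_2) \cap N(u)$. Tracing how all these $V_k$-representative conditions must be met simultaneously across the many distinguishing pairs---this is where the roles of the matching $W_2$--$W_3$ and the sets $U_w$ in Example~\ref{ex:nosmallblowup} become structurally necessary---I would deduce that sustaining unboundedly many twin subclasses forces some $\phi \in F$ to be \emph{$3$-partial}: i.e.\ to have $\Omega(n)$ neighbours and $\Omega(n)$ non-neighbours in each of at least three classes $V_j$.

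Finally, a direct accounting would show that such a $3$-partial $\phi$, together with the extra non-edges in $V'$ that the coding requires in order to keep $G$ simultaneously $K_{r+1}$-free and $(r+1)$-saturated, forces the Tur\'an deficit $\mu := t_{n,r} - e(G)$ to be at least $2n/r - o(n)$. For fixed $\varepsilon > 0$ and $n$ large this exceeds $(2-\varepsilon)n/r$, contradicting the hypothesis. Thus $M$ exists; combining with at most $2^f$ types per class, $G$ has at most $r \cdot 2^f \cdot M + f$ twin classes, and since any two distinct twins are non-adjacent, $G$ is a blow-up of a graph of that order. The most delicate point, and the expected main obstacle, is the third paragraph: making the implication ``unboundedly many twin subclasses $\Rightarrow$ a $3$-partial $\phi \in F$'' quantitative enough that the resulting deficit strictly exceeds $(2-\varepsilon)n/r$, rather than merely matching the boundary $2n/r - o(n)$ already saturated by Example~\ref{ex:nosmallblowup}.
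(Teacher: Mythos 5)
Your first two paragraphs are sound: Corollary~\ref{cor:AEScor} gives the decomposition, and the observation that every non-edge $xy$ between different $V_i$'s must have a saturating witness meeting $F$ (because $V' \setminus (V_i \cup V_j)$ supports cliques of size at most $r-2$) is correct and does appear, in a sharper form, in the paper's argument. But the heart of your proposal --- the third paragraph --- has a real gap that you yourself flag, and I do not think the route you sketch can be made to close it.

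The paper's proof does not bound the number of twin subclasses within an $F$-type by some $M(r,\varepsilon)$; it shows that number is exactly \emph{one}, i.e.\ that two vertices of $V_i$ with the same $F$-neighbourhood are twins. The device that makes this work, and which your plan is missing, is a classification of the classes $V_1,\dots,V_r$ relative to each subset $X \subset F$ of size $k \le r-2$ into $r-k-1$ ``$X$-big'' classes (in which the common neighbourhood of $X$ has $\Omega_{\varepsilon,r}(n)$ vertices) and the remaining ``$X$-small'' ones. The degree threshold used to define $F$ is tuned so that any two vertices lying in distinct $X$-small classes automatically have a common neighbour inside each $X$-big class, hence \emph{cannot} be adjacent without creating a $K_{r+1}$ through $X$. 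Consequently, for a non-edge $v_1v_2$ with saturating clique $X \cup Y$, either $|X|=r-1$ (and then $X$ already witnesses non-adjacency for every pair with the same $F$-intersection), or $|X|\le r-2$ and a counting argument forces $v_1,v_2$ to sit in $X$-small classes, so non-adjacency is again dictated purely by $N_F(\cdot)\cap N_F(\cdot)$. That is what makes the $F$-type determine the full neighbourhood.

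Your proposed surrogate --- that unboundedly many twin subclasses must produce a ``$3$-partial'' $\phi \in F$ with $\Omega(n)$ neighbours and $\Omega(n)$ non-neighbours in each of three classes --- does not match what actually happens at the threshold. In Example~\ref{ex:nosmallblowup} the exceptional vertex has \emph{sublinear} neighbourhoods in the three special classes $V_1,V_2,V_3$, not linear ones, yet it sustains $\Theta(\sqrt n)$ distinct twin subclasses; and indeed a genuinely $3$-partial $\phi$ would by Lemma~\ref{le:help1}-type reasoning force a deficit of $\Omega(n^2)$, far beyond the linear scale at play. So the dichotomy ``unboundedly many subclasses $\Rightarrow$ $3$-partial $\phi$'' is both unproved and, on the evidence of the paper's own extremal construction, the wrong dividing line. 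I would replace the third and fourth paragraphs with the big/small classification lemma and the argument that it forces each $F$-type within each $V_i$ to be a single twin class; the rest of your outline (bounding the number of types by $r\cdot 2^{|F|}$ and concluding $G$ is a bounded-order blow-up) then goes through as you intended.
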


\begin{proof}
To illustrate the main ideas, let us assume first that $r=3$. Let $F$ be the set of all vertices of degree at most $(2/3-\varepsilon/25)n$. By Corollary~\ref{cor:AEScor} $F$ is finite and $G[V\setminus F]$ is $3$-partite; call its partition classes $V_1$, $V_2$ and~$V_3$. By assumption on the size of~$G$ each vertex~$u\in F$ will have at least~$\varepsilon n/10$ neighbours in some $V_i$.  (Otherwise, $\deg(u) \leq 3 \varepsilon n/10+O(1)$, which means that $e(G \setminus \{u\}) > t_{n,3}-(2-\varepsilon/30)n/3>t_{n-1,3}=t_{n,3}-2n/3+O(1)$, a contradiction.)

For each $u\in F$ let $V_i^u$ and $V_j^u$ be those of the $V_i$ to which $u$ has the smallest number of neighbours; call them $u$-\emph{small} classes. By the assumption on $e(G)$, the remaining $u$-\emph{big} class contains at least~$\varepsilon n/10$ neighbours of~$u$. To simplify notation assume that for a given~$u$ we have $V_i^u=V_1$ and $V_j^u=V_2$. By assumption on degrees in $V\setminus F$, no two vertices in $N_{V_1}(u) \cup N_{V_2}(u)$ can be adjacent, because any two such vertices must have at most~$2\varepsilon n/25 + O(1) < \varepsilon n/10$ total non-neighbours in $V_3$, and so must have a common neighbour in $N_{V_3}(u)$. 

Now consider two arbitrary non-adjacent vertices in different partition classes, say $v_1\in V_1$ and $v_2\in V_2$. Let $J=N_{F}(v_1)\cap N_{F}(v_2)$. We show that any two vertices $w_1\in V_1$ and~$w_2\in V_2$ with $N_{F}(w_1)\cap N_{F}(w_2)=J$ are also not adjacent. This will imply that the neighbourhood of every vertex in $V\setminus F$ is determined by its $F$-neighbourhood, resulting in a finite number of twin classes.

Since $G$ is $4$-saturated, adding an edge between $v_1$ and $v_2$ would create a copy~$T$ of~$K_4$. This can happen in two different ways. If $T$ contains two vertices $u_1$,~$u_2\in F$, then adding an edge between $w_1$ and $w_2$ would create a copy of~$K_4$ in $G$, namely $w_1w_2u_1u_2$. If not, then $T$ must contain some $u\in F$ and $v_3\in V_3$. In this case the edges $v_1v_3$ and~$v_2v_3$ are present in $G$, meaning that $V_1$ and $V_2$ must be $u$-small, and, consequently, that $w_1$ and $w_2$ are not adjacent.

For arbitrary $r$ the argument is similar. Let $F$ be the set of all vertices of degree at most $(1-1/r-\varepsilon/(6r+3))n$. Again, $F$ is finite and $G[V\setminus F]$ is $r$-partite; call its partition classes $V_1$, \dots,~$V_r$. By the assumption on $e(G)$, for any set~$X \subset F$ with $k\leq r-2$ vertices, the common neighbourhood of the vertices in $X$ contains at least $\varepsilon n/(3r+1)$ vertices from each of some $r-k-1$ partition classes. Call those partition classes \emph{$X$-big} and the remaining ones \emph{$X$-small}. Again, by the assumption on degrees in $V \setminus F$, if $v_1$ and $v_2$ are vertices in $X$-small partition classes, then they cannot be adjacent, for they must have a common neighbour in $N_{V_i}(X)$ for each $X$-big class~$V_i$, and thus, if they were adjacent, would form a copy of~$K_{r+1}$ consisting of $v_1$, $v_2$, the vertices of~$X$ and their common neighbour in each of the $r-k-1$ $X$-big classes.

Now consider two arbitrary non-adjacent vertices in different partition classes, say $v_1\in V_1$ and $v_2\in V_2$. Let $J=N_{F}(v_1)\cap N_{F}(v_2)$. We show again that any two vertices $w_1\in V_1$ and~$w_2\in V_2$ with $N_{F}(w_1)\cap N_{F}(w_2)=J$ are also not adjacent; this will suffice to show that the number of twin classes is finite.

Since $G$ is $(r+1)$-saturated, adding an edge between $v_1$ and $v_2$ would create a copy of~$K_{r+1}$. Hence, there exist sets $X\subset F$ and $Y\subset V\setminus F$ such that if the edge~$v_1v_2$ were present, then $\{v_1,v_2\}\cup X \cup Y$ would form a copy of $K_{r+1}$. If $|X|=r-1$, then we are done immediately, as the edge between $w_1$ and $w_2$ would, using $X$, also create a copy of $K_{r+1}$. So we may assume that $|X|\leq r-2$ and apply the above split into big and small classes. 

Denote the vertices of~$Y$ by $v_3$, \dots, $v_{r-k+1}$; different vertices must lie in different $V_i$. Since the number of big classes is~$r-k-1$, two of the vertices $v_1$, \dots,~$v_{r-k+1}$ must be in small classes, and the only way this can happen without these vertices being adjacent is if $v_1$ and $v_2$ are in small classes. In this case $w_1$ and $w_2$ will also be in $X$-small classes, and will thereby not be adjacent, as claimed.
\end{proof}

\subsection{Twin-free saturated graphs}\label{se:TwinFree}

In Section \ref{se:BoundedBlowup}, we studied the problem of how many edges ensure that an $(r+1)$-saturated graph is simple (has a bounded number of twin classes). Now we consider a related question: what is the largest number of edges that an $(r+1)$-saturated graph~$G$ can have if \emph{no two} vertices of~$G$ are twins?

We begin by proving Theorem \ref{thm:TwinFreenlogn} in the case $r = 2$.

\begin{prop}\label{prop:3SatManyTwins}
For each $\varepsilon>0$ every sufficiently large 3-saturated graph~$G$ with $e(G) > n^2/4 - (1/10-\varepsilon)n \log_{2} n$ contains a pair of twins.
\end{prop}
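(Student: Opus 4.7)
The plan is to argue by contradiction. Suppose that $G$ is a sufficiently large $3$-saturated graph with $e(G) > n^2/4 - (1/10 - \varepsilon)n\log_2 n$ and no pair of twins. The proof combines the twin-forcing structure from the proof of Theorem~\ref{thm3saturated} with an iterated application of the Andr\'asfai--Erd\H{o}s--S\'os degree bound of Theorem~\ref{thm:AES}.

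First I would build a set $F \subseteq V(G)$ with $|F| = k$ such that $G \setminus F$ is bipartite, via iterative peeling: while the current graph on $m$ vertices is not bipartite, Theorem~\ref{thm:AES} provides a vertex of degree at most $2m/5$, which I remove. Writing the final bipartition as $V\setminus F = A \sqcup B$, a verbatim application of the reasoning in Theorem~\ref{thm3saturated} shows that any two vertices $a_1, a_2 \in A$ with $N_F(a_1) = N_F(a_2) = X$ are twins in $G$: for every $b \in B$, triangle-freeness forbids the edge $a_i b$ whenever $X \cap N_F(b) \neq \emptyset$, whereas $3$-saturation forces $a_i b$ in the opposite case (the third vertex of the required triangle must lie in $F$, since $A$ and $B$ are independent). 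Hence $a \mapsto N_F(a)$ is injective on $A$, and symmetrically on $B$, so $|A|, |B| \leq 2^k$. Combined with $|A| + |B| = n - k$, this gives $k \geq \log_2 n - 2$ for $n$ large.

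Next I would bound $e(G)$ from above. The $i$-th peeling step removes at most $2(n-i+1)/5$ edges, so summing and adding the trivial bound $(n-k)^2/4$ on the remaining bipartite graph yields
\[
e(G) \;\leq\; \frac{(n-k)^2}{4} + \frac{k(2n-k+1)}{5} \;=\; \frac{n^2}{4} - \frac{nk}{10} + \frac{k^2}{20} + \frac{k}{5}.
\]
The function $h(k) := nk/10 - k^2/20 - k/5$ has derivative $(n-k-2)/10$, so it is increasing throughout the range $k \in [0, n-2]$. Using the lower bound $k \geq \log_2 n - 2$ gives $h(k) \geq h(\log_2 n - 2) = (n\log_2 n)/10 - O(n)$. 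On the other hand, the assumed $e(G) > n^2/4 - (1/10-\varepsilon) n\log_2 n$ rearranges to $h(k) < (1/10-\varepsilon) n\log_2 n$. Together these two estimates produce $\varepsilon n\log_2 n < O(n)$, which fails for $n$ sufficiently large.

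The main obstacle, in my view, is recognizing where the constant $1/10$ originates: it is exactly the gap $1/2 - 2/5$ between the mean-degree of a near-extremal triangle-free graph and the AES degree threshold, multiplied by the $\log_2 n$ vertices of $F$ that twin-freeness forces. Once this is seen, the rest is a routine combination of iterated AES peeling with the twin-forcing argument borrowed from Theorem~\ref{thm3saturated}; no further combinatorial constructions are required.
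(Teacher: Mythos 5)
Your proof is correct and follows essentially the same route as the paper's: iterated Andr\'asfai--Erd\H{o}s--S\'os peeling to obtain a bipartite remainder, the saturation/twin-forcing argument borrowed from Theorem~\ref{thm3saturated} to show that the $F$-neighbourhood map is injective on each side and hence $|F| \gtrsim \log_2 n$, and the resulting edge deficit of at least $\approx |F| \cdot n/10$. Your presentation is, if anything, more explicit than the paper's about the quantitative bookkeeping (the closed-form bound via $h(k)$), but the underlying argument is the same.
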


\begin{proof}
The argument is similar to the proof of Theorem \ref{thm3saturated}.  Let $G$ be as in the statement of the proposition.  By Theorem~\ref{thm:AES}, we can produce a bipartite subgraph of~$G$ by removing a set~$F$ of $m$ vertices of degree at most~$2n/5 = n/2 - n/10$.  Hence, each vertex removed increases the average degree of the remaining graph by $1/10 + o(1)$.  As $G\setminus F$ is triangle-free, it has average degree at most $\abs{V(G)\setminus F}/2$, and so the bound on $e(G)$ implies that we must have $m < n\log_2 n$, i.e., that $(n-m)/2>2^m$.

Let $V_1$ and $V_2$ be the partition classes of $G[V\setminus F]$. By the same argument as in the proof of Theorem \ref{thm3saturated}, the neighbourhoods of vertices in $V_1$ and $V_2$ are determined by their neighbourhoods in $F$.  Then the bound on $m$ implies that two vertices of the larger partition class will have the same $F$-neighbourhood, which means that they are twins.
\end{proof}

Now we show that Proposition \ref{prop:3SatManyTwins} is best possible up to a constant factor in the $n \log_{2} n$-term.

\begin{example}\label{ex:3satTwinFree}
Fix $m$ and let $n=2m+4\log_2{m}$. We build a graph~$G$ on $n$ vertices as follows. Let $S_1$, $S_2$, $U_1$, $U_2$, $B_1$ and~$B_2$ be pairwise disjoint sets vertex sets with $\abs{B_i}=m$ and $\abs{S_i}=\abs{U_i}=\log_2{m}$ for $i = 1$,~$2$. Add all edges between $S_1$ and~$S_2$, between $U_1$ and~$U_2$ and between $B_1$ and~$B_2$. Give different vertices of~$B_1$ distinct neighbourhoods in $S_2$, and similarly for $B_2$ and $S_1$. Place matchings between $U_1$ and $S_1$ and between $U_2$ and $S_2$. Finally, if $u_1 \in U_1$ and $s_1 \in S_1$ are adjacent, we join $u_1$ to all vertices of~$B_2 \setminus N_{B_2}(s_1)$, and similarly for each $u_2 \in U_2$ and its neighbour~$s_2 \in S_2$. 

It is easy to see that $G$ is twin-free and $3$-saturated.  Furthermore, each vertex in $B:=B_1\cup B_2$ has $m$ neighbours in $B$ and $\log_2{m}$ neighbours outside. Thus
\[
e(G)>m^2+2m\log_2{m}=t_{n,2}-(1+o(1))n\log_{2}n.
\]
\end{example}

Next we prove Theorem~\ref{thm:TwinFreenlogn} for every $r \geq 3$.

\begin{proof}[Proof of Theorem \ref{thm:TwinFreenlogn}.]
Let $G$ be an $(r+1)$-saturated graph on $n$ vertices with no twins. Our aim is to show that, provided $n$ is sufficiently large, $e(G)\leq t_{n,r}-c' n \log n$ for some constant $c'(r)$. 

We may assume that $G$ is not $r$-partite: as observed earlier, if $G$ is $r$-partite, then it must be complete $r$-partite, which implies that every vertex has a twin. Let $F^1$ be the set of low degree vertices, as given by Theorem \ref{thm:AES}. In particular, $G [V \setminus F^1]$ is $r$-partite and $\abs{F^1} \leq c_1\log n$, where $c_1 > 0$ is some constant $c_1$ (otherwise $e(G) \leq t_{n-|F|,r}+\frac{3r-4}{3r-1}n|F|\leq t_{n,r} - c_2 n\log n$ for some constant $c_2$ and we are done). Let $V_1$, \dots,~$V_r$ be the colour classes of $G [V \setminus F^1]$; each of the $V_i$ must have at least~$\frac{n}{2r}$ vertices.

We partition the vertices of~$V_1$ according to their neighbourhood in $F^1$.  Let $W^1$ be an arbitrary partition class and let $A^1 \subset F^1$ denote the common neighbourhood of the vertices of~$W^1$. If $|W_1|>1$, we may assume that $A^1 \neq \emptyset$, for otherwise, if $w \in W^1$ and $v \in V_i$ for some $i \neq 1$, then we may add the edge~$wv$ to~$G$ without creating a copy of~$K_{r+1}$.  However, $G$ is assumed to be $(r+1)$-saturated, which means that all such edges are already present; consequently, all vertices in $W^1$ are twins, a contradiction.

Let $H^1$ be the $(r - 1)$-uniform hypergraph with vertex set~$V_2 \cup \dots \cup V_r$ consisting of all cliques $(v_1, \dots, v_{r-1})$ such that $v_1$, \dots,~$v_{r-1}$ are all adjacent to some $a \in A^1$.  Let $C^1$ be a minimum vertex cover of~$H^1$.  If $\abs{C^1} \geq \log_2 \abs{W^1}$, then we stop.  Otherwise, we set $F^2 = A^1 \cup C^1$ and partition the vertices of~$W^1$ according to their neighbourhoods in $F^2$.

We continue the process inside each partition class as follows. After the $j$th partition, we consider sets~$W^j$, each of which has common neighbourhood~$A^j \subset F^j$.  Once again, we may assume that $A^j$ is non-empty.  We let $H^j$ be the $(r - j)$-uniform hypergraph with vertex set~$V_2 \cup \dots \cup V_r$ consisting of all cliques $(v_1, \dots, v_{r-j})$ such that $v_1$, \dots,~$v_{r-j}$ form a clique of size~$r$ with some $a_1$, \dots,~$a_j \in A^j$.  We let $C^j$ be a minimum vertex cover of~$H^j$.  If $\abs{C^j} < \log_2 \abs{W^j}$, then we set $F^{j+1} = A^j \cup C^j$ and continue.  Otherwise, we stop.

Suppose that $j = r - 2$.  In this case, $H^{r-2}$ is a graph.  Suppose that $\abs{C^{r-2}} < \log_2 \abs{W^{r-2}}$.  We have assumed that none of the vertices in $W^{r-2}$ are twins, but our assumption on $\abs{C^{r-2}}$ means that there must exist $w_1$,~$w_2 \in W^{r-2}$ such that $N_{C^{r-2}}(w_1) = N_{C^{r-2}}(w_2)$.  Hence, there exists $s \notin C^{r-2}$ such that $sw_1 \in E(G)$ but $sw_2 \notin E(G)$.  Because $G$ is $(r + 1)$-saturated, there exists a set~$K$ of $r - 1$ vertices such that if we added the edge~$sw_2$ to~$G$, then $s$, $w_2$, and the vertices of~$K$ would form a copy of~$K_{r+1}$.  Observe that $A^{r-2}$ contains exactly~$r - 2$ vertices of~$K$.  Indeed, if $A^{r-2}$ contained at most~$r - 3$ vertices of~$K$, then there would be an edge of~$H^{r-3}$ disjoint from $F^{r-2}$, contradicting the construction of~$F^{r-2}$.  On the other hand, if $A^{r-2}$ contained all $r - 1$ vertices of~$K$, then $s$, $w_1$, and the vertices of~$K$ would form a copy of~$K_{r+1}$ in $G$, which is again a contradiction.

Let $s'$ be the vertex of~$K$ that is not contained in $A^{r-2}$.  Note that this implies that $s'w_1 \notin E(G)$.  Then our assumption that $N_{C^{r-2}}(w_1) = N_{C^{r-2}}(w_2)$ means that $s' \notin C^{r-2}$.  However, the definition of~$s'$ also implies that $ss'$ is an edge in $H^{r-2}$, which means that $C^{r-2}$ is not a vertex cover of~$H^{r-2}$, a contradiction.

Thus, for some $j \leq r - 2$, we have $\tau(H^j) \geq \log_2 \abs{W^j}$, where $\tau$ denotes the size of a minimum vertex cover.  It is well known that if $H$ is a $t$-uniform hypergraph, then $\tau(H) \leq t\nu(H)$ (simply remove the vertices of a maximum matching), which means that we have $\nu(H^j) \geq c\log_2 \abs{W^j}$.  Let $M$ be a maximum matching of~$H^j$ and let $(v_1, \dots, v_{r-j}) \in M$.  Then for each $w \in W^j$, one of the edges $wv_1$, \dots,~$wv_{r-j}$ is absent from $G$, because by the definition of $H^j$, there exist vertices $a_1$, \dots,~$a_j \in A^j$ that form a clique of size~$r$ with the $v_i$.  Thus, there are at least $c \abs{W^j} \log_2\abs{W^j}$ non-edges between $W^j$ and $V(H^j)$.

The procedure above defines a partition~$\mathcal{W}$ of~$V_1$.  Recalling that $V(H^j) = V_2 \cup \dots \cup V_r$ for each~$j$, we see from the argument above that
\[
e(G) \leq t_{n,r} - c\sum_{W \in \mathcal{W}} \abs{W}\log_2\abs{W} \leq t_{n,r} - c\abs{V_1}\log_2\abs{V_1} \leq t_{n,r} - c'n\log n,
\]
where the second inequality follows from Jensen's inequality.  This completes the proof.
\end{proof}

Observe that in the proof of Theorem~\ref{thm:TwinFreenlogn}, we did not need to assume that $G$ was twin-free, only that it contained a twin-free independent set of size~$cn$ for some $c > 0$.  Thus, Theorem~\ref{thm:TwinFreenlogn} has the following corollary.

\begin{corollary}\label{cor:TwinFreenlognAlt}
For every $\varepsilon > 0$ there exists $\delta > 0$ such that if $G$ is $(r+1)$-saturated and $e(G) \geq t_{n,r} - \delta n\log n$, then at least~$n - \varepsilon n$ vertices of~$G$ have twins.
\end{corollary}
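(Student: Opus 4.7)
The plan is to leverage the authors' own observation that the proof of Theorem~\ref{thm:TwinFreenlogn} only requires a twin-free independent set of linear size, not global twin-freeness. I would therefore argue by contradiction: assume that fewer than $n - \varepsilon n$ vertices have twins, let $T$ denote the set of twin-free vertices (so $|T| > \varepsilon n$), and exhibit a twin-free independent subset of $T$ of size linear in $n$ to which the argument of Theorem~\ref{thm:TwinFreenlogn} applies.

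First I would dispose of the case that $G$ is $r$-partite: since $G$ is $(r+1)$-saturated it must then be complete $r$-partite, and the lower bound on $e(G)$ forces every colour class to have size at least~$2$, so every vertex has a twin and $T=\emptyset$, a contradiction for $n$ large. Thus $G$ is not $r$-partite, and Theorem~\ref{thm:AES} supplies a set~$F^{1}$ of low-degree vertices such that $G[V \setminus F^{1}]$ is $r$-partite with classes $V_{1},\dots,V_{r}$. As in the proof of Theorem~\ref{thm:TwinFreenlogn}, we may assume $|F^{1}|\leq c_{1}\log n$, for otherwise the estimate $e(G)\leq t_{n-|F^{1}|,r}+\tfrac{3r-4}{3r-1}n|F^{1}|$ already gives $e(G)\leq t_{n,r}-c_{2}n\log n$, contradicting the hypothesis once $\delta<c_{2}$. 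By pigeonhole, some class, say $V_{1}$, satisfies $|T\cap V_{1}|\geq (|T|-|F^{1}|)/r\geq \varepsilon n/(2r)$ for $n$ large; this is the twin-free independent set we want.

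Next, I would run the iterative partitioning procedure from the proof of Theorem~\ref{thm:TwinFreenlogn} with $T\cap V_{1}$ in place of $V_{1}$, so that every class $W^{j}$ produced by the recursion is a subset of $T\cap V_{1}\subseteq T$. Twin-freeness enters at exactly two places: (i) in the dichotomy ``$A^{j}=\emptyset$ implies all of $W^{j}$ are mutually twins'', which would now contradict $W^{j}\subseteq T$; and (ii) at the terminal step $j=r-2$, where we need that no two vertices of $W^{r-2}$ are twins, which again follows from $W^{r-2}\subseteq T$. The same counting then yields
\[
e(G)\leq t_{n,r}-c(r)\,|T\cap V_{1}|\,\log_{2}|T\cap V_{1}|\leq t_{n,r}-c'(r)\,\varepsilon\, n\log n,
\]
and choosing $\delta=c'(r)\varepsilon/2$ contradicts the hypothesis $e(G)\geq t_{n,r}-\delta n\log n$.

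The only real point requiring attention is that restricting the source set to $T\cap V_{1}$ does not invalidate the saturation-based deductions driving the iteration. But the hypergraphs $H^{j}$ and the minimum vertex covers $C^{j}$ are defined in terms of the full vertex set of~$G$, and $(r+1)$-saturation is a global property; hence the counting of non-edges between $W^{j}$ and $V_{2}\cup\dots\cup V_{r}$ transfers verbatim. Twin-freeness is only invoked for vertices of $W^{j}$ itself, and these lie in $T$ by construction, so nothing else has to be re-verified.
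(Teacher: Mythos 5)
Your proposal is correct and follows exactly the route the paper takes: the paper's entire ``proof'' of the corollary is the preceding remark that the proof of Theorem~\ref{thm:TwinFreenlogn} only requires a linear-sized twin-free independent set, which you obtain as $T\cap V_1$ via Theorem~\ref{thm:AES} and pigeonhole, and you correctly pinpoint the two places in the recursion where twin-freeness is invoked. In short, you have just expanded the paper's one-sentence justification into a full argument, with no change of method.
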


When we apply Corollary~\ref{cor:TwinFreenlognAlt}, we will only use that if $G$ is $(r+1)$-saturated and has a twin-free set of size~$cn$, then $e(G) \leq t_{n,r} - f(n)$, where $f(n)$ tends to infinity with $n$.

Now we show that Theorem~\ref{thm:TwinFreenlogn} is best possible up to the value of the constant~$c$.

\begin{example}\label{ex:r+1SatTwinFree}
The construction is similar to Example \ref{ex:nosmallblowup}.  For $n$~sufficiently large, we construct a twin-free, $(r+1)$-saturated graph on $n$ vertices as follows.  Let $H$ be the disjoint union of $T_{n-r,r}$ and $r$ isolated vertices $u_1$, \dots,~$u_r$.  Let $V_1$, \dots,~$V_r$ denote the colour classes of the copy of~$T_{n-r,r}$.

Let $m$ be a quantity to be defined later and let $M = \binom{m}{m/2}$.  We partition $V_1 \cup \cdots \cup V_r$ into three families of sets $\bigl\{W^{(i)}_1\bigr\}_{i=1}^r$, $\bigl\{W^{(i)}_2\bigr\}_{i=1}^r$ and~$\bigl\{W^{(i)}_3\bigr\}_{i=1}^r$ such that for each $i$, we have $\W{i}{1} \subset V_i$, $\W{i}{2} \subset V_{i+1}$ and $\W{i}{3} \subset V_{i+2}$ (where the addition is modulo~$r$), as well as that $\bigabs{W^{(i)}_1} = M$ and $\bigabs{W^{(i)}_2} = \bigabs{W^{(i)}_3} = m$.
It follows that
\begin{equation}\label{eq:MmRel}
n = r(M + 2m + 1).
\end{equation}
Because $m = o(M)$, \eqref{eq:MmRel} implies that
\begin{equation}\label{eq:MAsymptotic}
M \sim n/r,
\end{equation}
which in turn implies that
\begin{equation}\label{eq:mAsymptotic}
m \sim \log_2 n.
\end{equation}

Now we modify $H$ in order to make it twin-free and maximal $K_{r+1}$-free.  For each~$i$, $i = 1$, \dots,~$r$, we modify $H[\W{i}{1} \cup \W{i}{2} \cup \W{i}{3}]$ as in Example~\ref{ex:nosmallblowup}. 
Then we connect $u_i$ to all vertices of $W^{(i)}_1 \cup W^{(i)}_2 \cup W^{(i)}_3$ and to all vertices of each $V_k$, $k \notin \{i, i+1, i+2\}$ (mod~$r$).
Finally, we greedily add edges among the $u_i$.

Let $G$ denote the resulting graph.  It is easy to check that $G$ is both $(r+1)$-saturated and twin-free.  Moreover, 
\[
e(G) = t_{n-r,r} - r(2Mm + m^2 - Mm - m) + r\Bigl(\dfrac{r - 3}{r}(n - r) + 2m + M \Bigr) + e\bigl(G[\{u_1, \dots, u_r\}]\bigr).
\]
Then \eqref{eq:MmRel},~\eqref{eq:mAsymptotic} and~\eqref{eq:MAsymptotic} imply that
\begin{equation*}\label{eq:EdgeCount}
e(G) = t_{n,r} - r(Mm + m^2 - m) + O(n) = t_{n,r} - rMm + O(n) = t_{n,r} - n\log_2 n + O(n),
\end{equation*}
which is what we wanted to show.
\end{example}

The results of this subsection show that the threshold for the property that an $(r+1)$-saturated graph~$G$ has a pair of twins is $e(G) = t_{n,r} - \Theta(n \log_2 n)$.  We have not attempted to locate the threshold precisely, and leave this as an open problem.

\begin{problem}\label{prob:TwinFreeThreshold}
For $r \geq 2$, determine the supremum of all values~$c$ such that if $n$ is sufficiently large, then every $(r+1)$-saturated graph~$G$ on $n$ vertices with $e(G) \geq t_{n,r} - c n\log_2 n$ has a pair of twins.
\end{problem}

\begin{remark}
Let $c_r$ be the supremum defined in Problem~\ref{prob:TwinFreeThreshold}. Observe that Proposition~\ref{prop:3SatManyTwins} and Example~\ref{ex:3satTwinFree} imply that $1/10 \leq c_2 \leq 1$.  For $r \geq 3$, Example~\ref{ex:r+1SatTwinFree} implies that $c_r \leq 1$, while the lower bound $c_r \geq 1/2r(r-2)$ can be read out of the proof of Theorem~\ref{thm:TwinFreenlogn}.
\end{remark}

The next question asks for the best possible result along the lines of Corollary~\ref{cor:TwinFreenlognAlt}.

\begin{question}
Let $r \geq 2$ and let $c \leq c_r$.  For $n$~sufficiently large, what is the smallest number of vertices with twins that an $(r +1)$-saturated graph on $n$ vertices with at least~$t_{n,r} - cn\log_2 n$ edges may contain?
\end{question}

\subsection{Large complete \texorpdfstring{$r$}{r}-partite subgraphs}\label{se:complete}

In this short section, we consider another way in which an $(r+1)$-saturated graph may be `close' to~$T_{n,r}$, namely, by having a large complete $r$-partite subgraph.  We have shown that if $r \geq 3$ and if $c$ is large enough, then there exist $(r + 1)$-saturated graphs with $t_{n,r} - cn$ edges that are not simple.  However, every $4$-saturated graph with at least this many edges must contain a large complete tripartite subgraph.

\begin{thm}\label{thmK4free}
For every $c>0$ every $4$-saturated graph $G$ with $e(G)> t_{n,3}-cn$ contains a complete tripartite graph on $(1-o(1))n$ vertices.
\end{thm}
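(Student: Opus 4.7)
The plan is to leverage the twin structure supplied by Corollary~\ref{cor:TwinFreenlognAlt}. Fix any $\varepsilon>0$ and let $\delta=\delta(\varepsilon)>0$ be the constant it produces. Since $cn = o(n\log n)$, for $n$ large enough we have $e(G) > t_{n,3}-cn > t_{n,3}-\delta n\log n$, so the corollary applies and yields a set $W\subseteq V(G)$ with $|W|\geq (1-\varepsilon)n$ each of whose vertices has a twin in~$G$. Partition $W$ into its twin classes $T_1,\dots,T_k$ and form the auxiliary graph~$H$ on vertex set $\{T_1,\dots,T_k\}$ by joining $T_iT_j$ exactly when every pair in $T_i\times T_j$ is an edge of~$G$. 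For pairs of twin classes this is an all-or-nothing dichotomy (if some cross-edge is present then by the twin property they all are), so $G[W]$ is precisely the blow-up of~$H$ with weights $a_i:=|T_i|$; and $H$ is $K_4$-free, since a $K_4$ in~$H$ would lift through representatives to a $K_4$ in~$G$.

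Next I would estimate $e(G[W])$. At most $|V(G)\setminus W|\cdot n\leq\varepsilon n^2$ edges of~$G$ meet $V(G)\setminus W$, so
\[
e(G[W]) = \sum_{ij\in E(H)} a_i a_j \;\geq\; e(G) - \varepsilon n^2 \;\geq\; t_{n,3} - cn - \varepsilon n^2.
\]
Setting $\alpha_i = a_i/|W|$ and using $|W|\leq n$ and $t_{n,3} = n^2/3 + O(n)$, this rearranges to $\sum_{ij\in E(H)}\alpha_i\alpha_j \geq 1/3 - O(\varepsilon)$, placing the weight vector $(\alpha_i)$ within $O(\varepsilon)$ of the Motzkin--Straus maximum~$1/3$ for $K_4$-free graphs.

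The principal step --- and what I expect to be the main obstacle --- is a stability version of Motzkin--Straus for $K_4$-free graphs: there should exist $\eta(\varepsilon)\to 0$ as $\varepsilon\to 0$ such that whenever $H$ is $K_4$-free with non-negative weights summing to~$1$ and $\sum_{ij\in E(H)}\alpha_i\alpha_j\geq 1/3-\varepsilon$, there is a partition $V(H)=C_1\cup C_2\cup C_3\cup R$ in which each~$C_k$ is independent in~$H$, every cross pair $TT'$ with $T\in C_k$, $T'\in C_{k'}$, $k\neq k'$, is an edge of~$H$, and $\sum_{T\in R}\alpha_T\leq\eta(\varepsilon)$. I would derive this by feeding $G[W]$ into the classical Erd\H{o}s--Simonovits stability theorem to obtain a $3$-partition of~$W$ with only $o(n^2)$ bad edges (inside a part, or missing between two parts), and then exploiting the blow-up rigidity: because bad edges and bad non-edges both come in complete bipartite blocks $T\times T'$, one can realign the partition so that each twin class lies entirely in one part, and then the aggregate weight of any twin classes still causing defects is forced to be small by the $o(n^2)$ budget.

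Granting the stability statement, the sets $X_k := \bigcup_{T\in C_k} T \subseteq V(G)$ are pairwise disjoint independent sets in~$G$ with every pair between distinct $X_k,X_{k'}$ joined by an edge, so $G[X_1\cup X_2\cup X_3]$ is complete tripartite. Its order is at least
\[
|W| - \sum_{T\in R} |T| \;\geq\; (1-\eta(\varepsilon))\,|W| \;\geq\; (1-\varepsilon)(1-\eta(\varepsilon))\,n,
\]
which tends to~$n$ if we let $\varepsilon\to 0$ slowly as $n\to\infty$, producing the desired complete tripartite subgraph of~$G$ on $(1-o(1))n$ vertices.
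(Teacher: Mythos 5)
The setup through Corollary~\ref{cor:TwinFreenlognAlt} is sensible, and the observation that pairs of twin classes induce complete or empty bipartite graphs (so that $G[W]$ is a blow-up of an auxiliary $K_4$-free graph~$H$) is correct. However, the proof has a genuine gap at the ``stability version of Motzkin--Straus,'' which you identify as the main obstacle but do not actually prove, and which is false as you have stated it.

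The problem is the loss incurred at the edge-count step. The hypothesis gives $e(G)>t_{n,3}-cn$, i.e.\ an $O(n)$ deficit, but by routing through Corollary~\ref{cor:TwinFreenlognAlt} and then subtracting all edges meeting $V(G)\setminus W$ you are left with only $e(G[W])\geq t_{n,3}-O(\varepsilon n^2)$. An $O(\varepsilon n^2)$ deficit in the Lagrangian does \emph{not} force a partition $C_1\cup C_2\cup C_3\cup R$ with $R$ of small weight. Concretely, suppose the twin classes in (what Erd\H{o}s--Simonovits gives as) $W_1$ and $W_2$ number $t=n^{0.9}$ on each side, each of weight $\Theta(1/t)$, and the ``bad'' missing cross-pairs between $W_1$ and $W_2$ form a quasi-random bipartite graph of density $\Theta(\varepsilon)$; the total weighted defect is then $O(\varepsilon)$, as required. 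To clean this up one must find $C_1\subset W_1$, $C_2\subset W_2$ with \emph{no} bad pair between them, i.e.\ a biclique in the complement of the bad graph, but the largest such biclique has only $O(\varepsilon^{-1}\log t)$ classes per side (a K\H{o}v\'ari--S\'os--Tur\'an / random-graph calculation), so $R$ carries weight $1-o(1)$, not $\eta(\varepsilon)$. Thus ``the aggregate weight of any twin classes still causing defects is forced to be small by the $o(n^2)$ budget'' is exactly the false step: the budget bounds the number of bad \emph{pairs}, not the number of twin classes they touch.

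What rescues the theorem is the $O(n)$ bound itself, which your intermediate estimate throws away. The paper's proof instead applies Corollary~\ref{cor:AEScor} to obtain a \emph{bounded} exceptional set $V_e$ (size $M(c)$, not $\varepsilon n$), and then uses the $4$-saturated property directly: it shows that, after discarding the small ``spoiler'' neighbourhoods $A_v$ and (for high-degree exceptional vertices) $B_v$, the adjacency of any $w_1,w_2$ in the remaining set is completely determined by their neighbourhoods in a fixed bounded set $U$. This yields a bounded number of classes with all-or-nothing cross adjacency, and the large classes must be fully joined because a missing biclique between two large classes alone would already cost more than $cn$ edges. Your approach never invokes the saturation property again after Corollary~\ref{cor:TwinFreenlognAlt}, which is precisely what would be needed to control the ``bad'' bipartite structure that your Motzkin--Straus stability statement cannot handle on its own.
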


For the proof of Theorem \ref{thmK4free} we will need the following two basic facts. 

\begin{lem}\label{le:help1}
If $G$ is triangle-free tripartite graph on $(m,m,m)$ vertices then $e(G)\leq t_{3m,3}-\frac{1}{4}m^2$. 
\end{lem}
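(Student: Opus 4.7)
The plan is a straightforward double-counting argument. Let $V_1, V_2, V_3$ be the three partition classes of~$G$, each of size~$m$, and let $X$ denote the total number of missing edges of~$G$ between distinct classes (i.e., non-edges of~$G$ that are edges of $K_{m,m,m}$). Since $K_{m,m,m}$ has exactly $t_{3m,3} = 3m^2$ edges, we have $e(G) = t_{3m,3} - X$, so it suffices to show that $X \geq m^2/4$.

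To estimate $X$, I would count in two ways the incidences between triples $(v_1, v_2, v_3) \in V_1 \times V_2 \times V_3$ and missing edges of~$G$ contained in them (each triple is a triangle of $K_{m,m,m}$, so this is meaningful). On one hand, there are $m^3$ such triples, and because $G$ is triangle-free each of them must contribute at least one missing edge; thus the number of incidences is at least $m^3$. On the other hand, any fixed missing edge $uv$ with $u \in V_i$, $v \in V_j$ ($i \neq j$) lies in exactly $m$ such triples, one for each choice of third vertex from the remaining class, so the number of incidences equals $mX$. Combining the two gives $mX \geq m^3$, i.e.\ $X \geq m^2$, which is well above $m^2/4$.

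There is no genuine obstacle here; the entire argument is an elementary double counting. Note that it actually yields the sharper bound $e(G) \leq t_{3m,3} - m^2$, but the weaker form in the statement is presumably all that is required for the application to Theorem~\ref{thmK4free}.
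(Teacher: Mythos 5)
Your argument is correct, and it takes a genuinely different route from the paper. The paper proceeds by contradiction via averaging: if $e(G) > t_{3m,3} - m^2/4$, then the average degree exceeds $3m/2$, so some vertex $v$ (say in $V_1$) has at least $m/2$ neighbours in each of $V_2$ and $V_3$; since $G$ is triangle-free, $N_{V_2}(v)$ and $N_{V_3}(v)$ span no edges, forcing at least $m^2/4$ missing edges. Your double count over the $m^3$ cross-triples is cleaner and in fact yields the stronger (and tight) bound $X \geq m^2$, i.e.\ $e(G) \leq t_{3m,3} - m^2$: tightness is witnessed by taking all edges between $V_1$ and $V_2$ and between $V_1$ and $V_3$, and none between $V_2$ and $V_3$. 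Both approaches give the $\Theta(m^2)$ deficit needed in the proof of Theorem~\ref{thmK4free}, so the weaker constant in the statement is harmless, but your version is arguably the better lemma.
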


\begin{proof}
Suppose otherwise. Let $V_1$, $V_2$,~$V_3$ be the colour classes of~$G$. Since the average degree in $G$ is greater than~$3m/2$, there exists a vertex, say $v \in V_1$, with at least $m/2$ neighbours in each of $V_2$ and $V_3$. As $G$ is triangle-free, all neighbours of $v$ must be independent, resulting in $e(G)\leq t_{3m,3}-\frac{1}{4}m^2$, a contradiction.
\end{proof}

As an immediate consequence, we obtain:

\begin{lem}\label{le:help2}
If $G$ is triangle-free tripartite graph on $(a,b,c)$ vertices, where $a\leq b \leq c$ then $e(G)\leq t_{a+b+c,3}-\frac{1}{4}\left\lfloor\frac{b}{a}\right\rfloor a^2$.
\end{lem}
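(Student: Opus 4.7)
The plan is to reduce the claim to a non-edge count and then apply Lemma~\ref{le:help1} to disjoint balanced sub-tripartitions. Write $n = a+b+c$ and let $V_1, V_2, V_3$ be the colour classes with $|V_1|=a$, $|V_2|=b$, $|V_3|=c$. Since the complete tripartite graph $K_{a,b,c}$ has $ab+ac+bc$ edges and $t_{n,3}$ is the maximum of $xy+xz+yz$ over integer compositions $x+y+z = n$, we have $ab+ac+bc \leq t_{n,3}$. Denoting by $\bar e(G)$ the number of missing edges between distinct colour classes, the desired bound $e(G) \leq t_{n,3} - \tfrac14 \lfloor b/a\rfloor a^2$ follows immediately from the inequality
\[
\bar e(G) \;\geq\; \tfrac{1}{4}\,\left\lfloor \tfrac{b}{a}\right\rfloor a^2,
\]
so this is what I will establish.

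Set $k = \lfloor b/a \rfloor$. Because $b \leq c$, we have $ka \leq b \leq c$, so we may pick disjoint subsets $B_1, \dots, B_k \subseteq V_2$ and $C_1, \dots, C_k \subseteq V_3$, each of size exactly~$a$. For each index~$i$, the induced subgraph $G[V_1 \cup B_i \cup C_i]$ is triangle-free and tripartite with parts of sizes $(a,a,a)$, so Lemma~\ref{le:help1} applied with $m=a$ gives
\[
e\bigl(G[V_1 \cup B_i \cup C_i]\bigr) \;\leq\; t_{3a,3} - \tfrac{1}{4}a^2 \;=\; 3a^2 - \tfrac{1}{4}a^2.
\]
Since $K_{a,a,a}$ has $3a^2$ edges, this means $G[V_1 \cup B_i \cup C_i]$ is missing at least~$\tfrac{1}{4}a^2$ pairs among its three colour classes.

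The final step is to observe that these missing pairs, summed over $i = 1, \dots, k$, are all distinct. Indeed, every non-edge counted inside $G[V_1 \cup B_i \cup C_i]$ is a pair of one of the three types $V_1 \times B_i$, $V_1 \times C_i$, or $B_i \times C_i$, and for $i\neq j$ the corresponding pair-sets are disjoint because the $B_i$ (respectively~$C_i$) are pairwise disjoint. Hence
\[
\bar e(G) \;\geq\; \sum_{i=1}^{k} \tfrac{1}{4}a^2 \;=\; \tfrac{1}{4}\left\lfloor \tfrac{b}{a}\right\rfloor a^2,
\]
which is the required inequality. There is no real obstacle; the only thing to verify carefully is the disjointness of the non-edges across the $k$ sub-tripartitions, which is immediate from the construction.
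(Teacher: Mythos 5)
Your proof is correct and follows essentially the same approach as the paper's one-line proof, which tersely observes that one can find $\lfloor b/a\rfloor$ edge-disjoint copies of $T_{3a,3}=K_{a,a,a}$ inside the complete tripartite graph on the given part sizes and apply Lemma~\ref{le:help1} to each. You have simply spelled out the non-edge count and the disjointness check that the paper leaves implicit.
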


\begin{proof}
Simply observe that $T_{a+b+c,3}$ contains $\lfloor b/a \rfloor$ edge-disjoint copies of $T_{3a,3}$.
\end{proof}

(Let us note that the maximum size of a triangle-free tripartite graph is studied in detail in~\cite{BSTT}.)

Now we are ready to give the proof of Theorem \ref{thmK4free}.

\begin{proof}[Proof of Theorem~\ref{thmK4free}.]
By Corollary~\ref{cor:AEScor}, we may assume that there exists a set~$V_e\subset V$ with $|V_e|<M=M(c)$ such that $G[V\setminus V_e]$ is tripartite. Let $V_1$, $V_2$ and $V_3$ be the partition classes of~$V\setminus V_e$. For every $v\in V_e$, define $A_v$, $B_v$ and~$C_v$ to be its neighbourhoods in $V_1$, $V_2$ and~$V_3$ such that $|A_v|\leq |B_v|\leq |C_v|$. 

First note that for every $v \in V_e$ the graph $(A_v, B_v, C_v)$ is triangle-free and tripartite. It follows from Lemma \ref{le:help1} that $|A(v)|= O(\sqrt{n})$, for otherwise $e(G) \leq t_{n,3} - \omega(n)$, a contradiction.

Next pick a (large) constant $C$ and split $V_e$ into `small' and 'large' vertices: $V_e=V_s\cup V_\ell$. Put $v\in V_s$ if $|A_v|<C$ and $v\in V_\ell$ otherwise. Notice that if $v\in V_\ell$, then, by Lemma \ref{le:help2} we have $|B_v|\leq c'n$, where $c'= \left ( \frac{4c}{C}+o(1)\right )$. 

Now consider the set 
\[
W:=V\setminus\Biggl (V_e \cup \bigcup_{v\in V_e} A_v \cup \bigcup_{v \in V_\ell} B_v\Biggr).
\]
Putting $W_i:=W\cap V_i$, we have that $|W_i|>n(1/3-c'M)+o(n)$ for each $i$. Let $U:= V_e\cup \bigcup_{v\in V_s} A_v$ and note that $|U|\leq (C+1)M$. We now split the vertices of~$W$ into a finite number of classes according to their neighbourhood in $U$. Let $w_1\in W_1$ and $w_2\in W_2$. We claim that their adjacency depends solely on their neighbourhoods in $U$. If $N_{U}(w_1)\cap N_{U}(w_2)$ is not independent, then $w_1 \not \sim w_2$, for otherwise we would have a copy of~$K_4$. On the other hand, if $N_{U}(w_1)\cap N_{U}(w_2)$ is independent but $w_1$ and $w_2$ are not adjacent, then there exist $v\in V_e$ and $u\in V_3$ such that $w_1$, $w_2$, $v$ and $u$ would form a copy of~$K_4$ if the edge~$w_1w_2$ was added. By definition of~$W$, this can only happen if $v\in V_s$ and $u\in A_v$. But then $u$,~$v \in U$, so $N_{U}(w_1)\cap N_{U}(w_2)$ is not independent, a contradiction. This proves the claim. 

To summarise, $W$ can be split into at most $3\cdot 2^{|U|}\leq 3\cdot 2^{(C+1)M}$ classes such that each pair of classes induces either an empty or a complete bipartite graph. Now consider only those classes in each $W_i$ that are of size at least $2\sqrt{(M+c)n}$. Each pair of them belonging to different $W_i$ must form a complete bipartite graph, otherwise $e(G) \leq t_{n,3} - cn$, a contradiction. Hence, their union forms a complete tripartite graph with at least 
\[
|W_i|- 2^{(C+1)M}\cdot 2\sqrt{(M+c)n}> n(1/3-c'M)+o(n)
\]
vertices in each colour class. Since $C$ was arbitrary and $c'\rightarrow 0$ as $C\rightarrow \infty$, this gives a complete tripartite graph on $(1-o(1))n$ vertices.
\end{proof}

We leave the general case as an open problem.

\begin{problem}
Given $r \geq 4$ and $c > 0$, how large is the largest complete $r$-partite subgraph that an $(r+1)$-saturated graph with at least~$t_{n,r} - cn$ edges is guaranteed to contain?
\end{problem}

\section{Extremal Graphs for the Chromatic Tur\'{a}n Problem}\label{se:chromturan}
In this section we will apply Theorem~\ref{thm:TwinFreenlogn} to give a new proof of Theorem~\ref{thm:kChromaticBounded} for $r\geq 3$ that is much shorter than the original proof by Simonovits~\cite{Sim74}. Recall that for $r=2$ this result was proved in Corollary~\ref{cor:sublinear}. Before we embark on the proof of Theorem~\ref{thm:kChromaticBounded} for arbitrary $r$, we need to introduce some notation.

Let $G$ be a graph with $\omega(G) = r$ and let $C$ be an $r$-clique in $G$ such that the quantity
\[
\sum_{v \in C} \deg(v)
\]
is maximised.  Define
\begin{equation}\label{eq:LambdaGdef}
\Lambda_r(G) := (r-1)\abs{V(G)} - \sum_{v \in C} \deg(v).
\end{equation}
The above expression can also be written as
\begin{equation}\label{eq:LambdaGaltdef}
\Lambda_r(G) = \sum_{v \in V(G)} \bigl(r - 1 - \deg_C(v)\bigr).
\end{equation}

Due to our assumption that $G$ is $K_{r+1}$-free, the right hand side of \eqref{eq:LambdaGaltdef} is non-negative, whereby it has a well-defined minimum over all graphs $G$ with $\omega(G)=r$ and $\chi(G)\geq k$:
\[
\Lambda_r(k):=\min_{\omega(G)=r, \chi(G)\geq k} \Lambda_r(G).
\]
We define $C_{k,r}$ to be the minimal order of a graph realising $\Lambda_{r}(k)$.

Recall from Section~\ref{se:strongstab} that if $u$,~$v \in V(G)$ then the Zykov symmetrization~$Z_{u,v}(G)$ replaces $u$ with a twin of~$v$.  In Section~\ref{se:strongstab}, we required that $u$ and $v$ not be adjacent. Here we extend the notion of $Z_{u,v}$ to the case when $u$ and $v$ are adjacent as follows: we make $u$ a twin of~$v$ and remove the edge~$uv$.

\begin{proof}[Proof of Theorem \ref{thm:kChromaticBounded}.]
Let $G$ be an extremal $K_{r+1}$-free graph on $n$ vertices with chromatic number at least~$k$. Suppose for a contradiction that, as $n\rightarrow \infty$, the number of twin classes in $G$ also tends to infinity. 

As was pointed out in the Introduction, it is immediate that there exists a constant~$c = c(k,r) > 0$ such that $e(G) \geq t_{n,r} - cn$. Thus, by Corollary~\ref{cor:AEScor}, $G$ can be made $r$-partite by removing a set~$F$ of $O_{k,r}(1)$ vertices. Let $V_1$, \dots,~$V_r$ be the partition classes of the remaining subgraph; each of them has to be of size $(1+o(1))n/r$---otherwise, we would have $e(G) \leq t_{n,r} - \omega(n^2)$, a contradiction.

Note that $G$ is in particular $(r+1)$-saturated. Since $e(G)$ exceeds the bound of Corollary ~\ref{cor:TwinFreenlognAlt}, for each $i$, a set $T_i\subset V_i$ of size $t_i = \abs{T_i}=(1+o(1))n/r$ will have twins in $V_i$.

\begin{claim}\label{cl:balanced}
We may assume that each $T_i$ forms a single twin class, that each pair~$(T_i, T_j)$ induces a complete bipartite graph, and that there exists $c > 0$ such that for all $i$ and $j$,
\begin{equation}\label{eq:TwinClassSizes}
\sum_{i=1}^r |V_i \setminus T_i| \geq c\abs{t_i - t_j}.
\end{equation}
\end{claim}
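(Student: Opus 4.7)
The plan is to establish the three parts successively, replacing $G$ at each step by a graph~$G^*$ that remains a counterexample---still $K_{r+1}$-free, of the same (extremal) edge count, with $\chi(G^*) \geq k$, and with the number of twin classes still tending to infinity. Note that as an extremal graph, $G$ is automatically $(r+1)$-saturated, so Corollary~\ref{cor:TwinFreenlognAlt} applies; in particular, once Part~1 is in force, the total number of twin classes is bounded by $r + \sum_k |V_k \setminus T_k| + |F|$, so the contradiction hypothesis forces $\sum_k |V_k \setminus T_k| \to \infty$.

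For Part~1, suppose $T_i$ contains distinct twin classes $C_1, C_2$ with representatives~$u \in C_1$, $v \in C_2$. Since $u$ and $v$ lie in the same independent set~$V_i$, either $Z_{u,v}$ or $Z_{v,u}$ is an increasing Zykov symmetrization; it preserves $K_{r+1}$-freeness by~\eqref{eq:ZykovOmega}, and the extremality of~$G$ forces $\deg(u) = \deg(v)$, so $e(G)$ is unchanged. Iterating across all twin classes of each $T_i$ produces a graph~$G^*$ in which each $T_i$ is a single twin class. The subtle point is that $\chi(Z_{u,v}(G))$ may drop below $\chi(G)$; I would handle this globally by observing that $G^*$ is a blow-up of a quotient graph~$\tilde{G}^*$ on $F \cup \{\text{one representative of each $T_i$}\} \cup \bigcup_k (V_k \setminus T_k)$, and by comparing $\tilde{G}^*$ with the twin-quotient of~$G$ argue that $\chi(\tilde{G}^*) \geq k$. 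This chromatic-number preservation is the main obstacle of the proof.

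For Part~2, after Part~1 each $T_i$ is a single twin class, so the bipartite graph between $T_i$ and $T_j$ is either complete or empty. If it were empty, then $e(G) \leq t_{n,r} - t_i t_j = t_{n,r} - (1+o(1))n^2/r^2$, contradicting $e(G) \geq t_{n,r} - cn$.

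For Part~3, I would modify $G^*$ by moving a single vertex~$v \in T_i$ into~$T_j$: by Parts~1 and~2 this is a relabelling within the blow-up structure, so the resulting graph~$G^\dagger$ is $K_{r+1}$-free and, since blow-ups preserve chromatic number, $\chi(G^\dagger) = \chi(G^*) \geq k$. A direct count gives $e(G^\dagger) - e(G^*) = (t_i - t_j - 1) + \bigl(|N_F(T_j)| - |N_F(T_i)|\bigr)$. By extremality this is non-positive; combining with the analogous inequality obtained by moving a vertex in the opposite direction yields $|t_i - t_j| \leq 1 + \bigabs{|N_F(T_i)| - |N_F(T_j)|} \leq 1 + |F| = O(1)$. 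Since $\sum_k |V_k\setminus T_k|\to\infty$, the desired inequality holds for any fixed~$c > 0$ once $n$ is large enough.
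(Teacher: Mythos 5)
Your Part~2 matches the paper's argument.  Parts~1 and~3 contain genuine gaps.

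\textbf{Part 1.}  You correctly identify chromatic-number preservation under Zykov symmetrization as the crux, but you leave it unresolved, proposing only a vague ``global'' comparison of quotient graphs that you do not carry out.  The paper handles this with a one-line observation that you miss: because every vertex of~$T_i$ has, by definition, a twin, symmetrizing such a vertex \emph{cannot} decrease the chromatic number.  Indeed, if $u$ has a twin~$u'$, then any proper colouring of~$G \setminus \{u\}$ extends to~$G$ by giving $u$ the colour of~$u'$, so $\chi(G \setminus \{u\}) = \chi(G)$; combined with $\chi(Z_{u,v}(G)) = \chi(G \setminus \{u\})$, this gives $\chi(Z_{u,v}(G)) = \chi(G)$.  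This also repairs a circularity in your write-up: you invoke extremality to conclude $\deg(u) = \deg(v)$, but that deduction already presupposes that $Z_{u,v}(G)$ is $(\geq\!k)$-chromatic, which is exactly what you had flagged as unresolved.  Without the twin observation, neither the degree equality nor the subsequent merging of twin classes goes through.

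\textbf{Part 3.}  Your edge count $e(G^\dagger) - e(G^*) = (t_i - t_j - 1) + \bigl(|N_F(T_j)| - |N_F(T_i)|\bigr)$ omits the contribution from $\bigcup_k (V_k \setminus T_k)$: when $v$ is moved from $T_i$ to $T_j$, its neighbourhood in these residual sets also changes, by an amount bounded in absolute value by $\sigma := \sum_k |V_k \setminus T_k|$.  Consequently the extremality inequality gives only $|t_i - t_j| \leq 1 + |F| + \sigma$, not $|t_i - t_j| = O(1)$, and the conclusion ``the desired inequality holds for any fixed $c > 0$'' is false as stated.  The argument is salvageable---since $\sigma \to \infty$ and $|F| = O(1)$, one gets $|t_i - t_j| \leq 2\sigma$ for large~$n$, hence~\eqref{eq:TwinClassSizes} with $c = 1/2$---but one must be careful not to overclaim.  (The paper runs the same computation in contrapositive form, again using the twin observation to preserve $\chi$ under the cross-class symmetrization; your blow-up argument for $\chi$-preservation in Part~3 is a valid alternative, and in fact it could have been used in Part~1 had you pushed the quotient-graph idea through.)
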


\begin{proof}[Proof of Claim \ref{cl:balanced}.]

First, it is easy to see that if $u$,~$v \in T_i$, then $\deg(u) = \deg(v)$: if not, then either $Z_{u,v}(G)$ or $Z_{v,u}(G)$ is $K_{r+1}$-free, is $(\geq\!k)$-chromatic and has strictly more edges than $G$, a contradiction.  So, by applying Zykov symmetrization within each $V_i$, we may assume that each $T_i$ is a single twin class.  (Note that because we only symmetrize vertices that have twins, this process will not decrease $\chi(G)$.)

Therefore, for each $i$ and $j$, $G[T_i \cup T_j]$ is either empty or complete bipartite.  Because of this and the fact that $t_i = (1+o(1))n/r$, for each $i$ and $j$, we must have $E(T_i, T_j) \neq \emptyset$: otherwise, $e(G)  \leq t_{n,r} - \omega(n^2)$ a contradiction. Hence, for each $i$ and $j$, $G[T_i \cup T_j]$ must be complete bipartite.

It remains to show that~\eqref{eq:TwinClassSizes} holds. Since, by assumption, the number of twin classes in $G$ is unbounded, so must be the left hand side of ~\eqref{eq:TwinClassSizes}. Hence, if~\eqref{eq:TwinClassSizes} does not hold, then there exist $i$ and $j$ such that $t_i - t_j = f(n)$, where $f(n)$ tends to infinity with $n$. In this case we have $\sum_{i=1}^r |V_i \setminus T_i| = o(f(n))$. Then the fact that all edges are present between different $T_i$ implies that if $v_i \in T_i$ and $v_j \in T_j$, then $\deg(v_j)-\deg(v_i) \geq (1-o(1))f(n)$.  Therefore, if we replace $v_i$ with a twin of~$v_j$, then we obtain a graph with strictly more edges than $G$ that is $K_{r+1}$-free and (because we have symmetrized vertices that have twins) is still $(\geq\!k)$-chromatic, which is a contradiction. This proves the claim.
\end{proof}

Let
\[
T = \max_i t_i \qquad \text{and} \qquad t = \min_i t_i.
\]
Let $G'$ be the graph obtained by identifying $t$ of the vertices of each $T_i$. Then $G'$ has $n' := n - r(t - 1)$ vertices; moreover, $n'$ is at least the number of twin classes in $G$, so by assumption $n'$ tends to infinity with $n$.  

We want to show that $G'$ has a large twin-free independent set.  Indeed, observe that $n' \leq r(T - t) + \sum_{i=1}^r |V_i \setminus T_i|$.  It follows from Claim~\ref{cl:balanced} that there exists $c' > 0$ such that for some~$i$, we have $|V_i \setminus T_i| \geq c'n'$.  The set~$V_i \setminus T_i$ is twin-free by definition, so, if $n$ (and hence $n'$) is large enough, Corollary~\ref{cor:TwinFreenlognAlt} implies that
\[
e(G') \leq t_{n', r} - Cn'
\]
for some large constant~$C$.

Let $H''$ be a $K_{r+1}$-free, $k$-chromatic graph on $\ell=C_{k,r}$ vertices such that $\Lambda_r(H'') = \Lambda_r(k)$ (recall that $C_{k,r}$ was defined as the smallest order of such a graph). Let $K \subset V(H'')$ be a clique that achieves the value of~$\Lambda_r(H'')$ and let $H'$ be the graph on $n'$ vertices obtained from $H''$ by blowing up each vertex of~$K$ by a factor of~$(n' - \ell)/r + 1$.  Observe that
\[
e(H') \geq t_{n',r} - cn'
\]
for some small constant~$c$ and that
\[
\Lambda_r(H') = \Lambda_r(H'') = \Lambda_r(k),
\]
where the value of $\Lambda_r(H')=\Lambda_r(k)$ is realised in $H'$ by the same clique $K$. 
In particular, we have
\begin{equation}\label{eq:MoreEdges}
e(H') > e(G')
\end{equation}
and
\begin{equation}\label{eq:BetterLambda}
\Lambda_r(H') \leq \Lambda_r(G').
\end{equation}

Let $v_i \in V(G')$ denote the vertex obtained by identifying the $t$ vertices of~$T_i$.  Thus, we obtain $G$ from $G'$ by blowing up each $v_i$ by a factor of~$t$.  Let $H$ be the graph on $n$ vertices obtained by blowing up each $v \in K\subset H'$ by a factor of~$t$.  Denote $C = \{v_1, \dots, v_r\} \subset V(G')$.  It follows from \eqref{eq:LambdaGdef}, \eqref{eq:MoreEdges} and~\eqref{eq:BetterLambda} that
\begin{align*}
e(G) &= e(G') + (t - 1) \sum_{v \in V(G')} \deg_C(v) + (t - 1)^2\dbinom{r}{2} \\
&\leq e(G') +(t - 1) \bigl((r - 1)n' - \Lambda_r(G')\bigr) + (t - 1)^2\dbinom{r}{2} \\
&< e(H') + (t - 1) \bigl((r - 1)n' - \Lambda_r(H')\bigr) + (t - 1)^2\dbinom{r}{2} \\
&=e(H),
\end{align*}
contradicting the extremality of~$G$.
\end{proof}

\begin{remark}\label{re:ChromaticConditions}
As noted in the Introduction, Theorem~\ref{thm:kChromaticBounded} was first proved by Simonovits~\cite{Sim74} in a more general setting of `chromatic conditions'---properties that are natural generalizations of statements such as `$G$ has chromatic number at least~$k$'.  (For a precise statement, see~\cite[Definition 1.5]{Sim74}.)  It is not hard to verify that our proof of Theorem~\ref{thm:kChromaticBounded} extends to $K_{r+1}$-free graphs that satisfy these more general conditions. We note that Simonovits's results also extend to a larger class of forbidden subgraphs than the class of complete graphs.
\end{remark}

With Theorem \ref{thm:kChromaticBounded} at our disposal, it is a straightforward exercise to determine the correct asymptotics of the extremal numbers for $K_{r+1}$-free, $(\geq\!k)$-chromatic graphs. In fact, the coefficient in the linear term can be conveniently described using the quantity~$\Lambda_r(k)$. 

\begin{thm}\label{thm:Kr+1free}
Let $r \geq 2$ and let $k \geq r + 1$.  If $G$ is a $K_{r+1}$-free, $(\geq\!k)$-chromatic graph maximising $e(G)$ over all such graphs of order $n$, then 
\begin{equation}\label{eq:MaxEdgesKr+1Free}
 e(G)= t_{n,r} - \dfrac{\Lambda_r(k)}{r}\cdot n + O_{k,r}(1).
\end{equation}
\end{thm}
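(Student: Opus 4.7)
The plan is to combine Theorem~\ref{thm:kChromaticBounded} with a direct computation of the maximum number of edges among blow-ups of a graph of bounded size.

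For the lower bound, take a graph $H''$ on $C_{k,r}$ vertices with $\omega(H'') = r$, $\chi(H'') \geq k$ and $\Lambda_r(H'') = \Lambda_r(k)$, and let $K \subset V(H'')$ be an $r$-clique achieving $\Lambda_r(H'', K) = \Lambda_r(k)$. I construct $G'$ by inflating each vertex of $K$ by an essentially balanced factor of approximately $n/r$ and keeping each vertex of $V(H'') \setminus K$ at multiplicity one, so that $|V(G')| = n$; this graph is $K_{r+1}$-free and has $\chi(G') = \chi(H'') \geq k$. The $K$-blowup contributes $t_{n-|H''|+r,r}$ edges, the edges between the $K$-blowup and the outside contribute approximately $(n/r)\sum_{v \notin K}\deg_K(v) = (n/r)\bigl((r-1)(|H''|-r)-\Lambda_r(k)\bigr)$, and all remaining edges contribute $O_{k,r}(1)$; using $t_{n,r}-t_{n-j,r} = j(r-1)n/r + O(j^2)$ for bounded $j$, the two linear-in-$n$ terms cancel to yield $e(G') = t_{n,r} - \Lambda_r(k)n/r + O_{k,r}(1)$.

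For the matching upper bound, let $G$ be an extremal graph. By Theorem~\ref{thm:kChromaticBounded}, $G$ is a blow-up of some $H$ with $|H| \leq m(k,r)$; taking $H$ to be the twin-free quotient of $G$, each blow-up factor $n_u$ is a positive integer, $\omega(H) = \omega(G)$ and $\chi(H) = \chi(G) \geq k$. Since the lower bound already exceeds $t_{n,r-1}$ by $\Omega(n^2)$, Tur\'an's theorem forces $\omega(H) = r$. Fix an $r$-clique $C \subset V(H)$ with $\Lambda_r(H,C) = \Lambda_r(H) \geq \Lambda_r(k)$, and set $Y = \sum_{u \notin C} n_u$. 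Because $\omega(H) = r$, every $u \notin C$ has $\deg_C(u) \leq r-2$, so $r-1-\deg_C(u) \geq 1$. A direct manipulation of the edge count yields $e(G) \leq t_{n,r} - (n/r)\sum_{u \notin C} n_u(r-1-\deg_C(u)) + O(Y^2 + 1)$, and comparing with the lower bound forces $Y = O_{k,r}(1)$ at the extremum. A local exchange argument, applied first among pairs in $C$ to show $|n_{c_i} - n_{c_j}| = O_{k,r}(1)$ and then between $V(H) \setminus C$ and $C$, shows that $n_u = 1$ for every $u \notin C$: if some $n_u \geq 2$, moving a unit of weight from $u$ to any $c \in C$ changes $e(G)$ by $\sum_{w \sim c} n_w - \sum_{w \sim u} n_w - O(1) \geq n/r - O_{k,r}(1) > 0$, contradicting extremality. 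Repeating the lower-bound calculation with this structure gives $e(G) = t_{n,r} - \Lambda_r(H)n/r + O_{k,r}(1) \leq t_{n,r} - \Lambda_r(k)n/r + O_{k,r}(1)$.

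The main obstacle is the correct sequencing of the upper-bound argument: one first establishes $Y = O_{k,r}(1)$ by comparing to the lower bound; then one uses an internal exchange within $C$ to show the $n_c$'s are nearly balanced; and only then can one deduce $n_u = 1$ for $u \notin C$ from the ``degree gap'' $\sum_{w \sim c} n_w - \sum_{w \sim u} n_w \geq n/r - O(1)$, which is visible only after the near-balance has been secured. Once this structural fact is in place, all remaining contributions are $O_{k,r}(1)$ since $|H|$ is bounded, and the two matching bounds complete the proof.
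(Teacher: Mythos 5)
Your proposal follows the paper's high-level strategy exactly: invoke Theorem~\ref{thm:kChromaticBounded} to reduce to blow-ups of a graph of bounded size, then optimise the blow-up weights.  The lower-bound construction and the edge-count bookkeeping that goes with it are correct.  However, there is a genuine gap in your upper-bound argument, and it is precisely the subtlety that the paper's Lemma~\ref{lemlagrangian} (the Motzkin--Straus compression) is designed to handle.

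You assert that because $\omega(H) = r$, every $u \notin C$ satisfies $\deg_C(u) \leq r-2$.  This is false: $\omega(H) = r$ only forbids $\deg_C(u) = r$, so $\deg_C(u) = r-1$ is allowed.  Such a $u$ completes $C$ minus one vertex to another $r$-clique, and these are exactly the vertices for which your exchange fails.  For such $u$, moving a unit of weight from $u$ to the (unique) non-neighbour $c \in C$ changes the edge count by $\sum_{w \sim c} n_w - \sum_{w \sim u} n_w$, and after the near-balancing of $C$ has been established, both sums are $(r-1)n/r + O_{k,r}(1)$; the gain is therefore $O_{k,r}(1)$, not $n/r - O_{k,r}(1)$ as you claim.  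So you cannot conclude $n_u = 1$ for such vertices, and in fact the optimal blow-up of $H$ may legitimately place $\Theta(n)$ weight on a different $r$-clique than the one you fixed in advance (for instance, when several $r$-cliques tie for the maximum degree sum).  The paper sidesteps this by first showing, \`a la Motzkin--Straus, that the set of positively-weighted vertices in an optimal blow-up may be taken to be \emph{some} clique, and only then identifying which clique gives the best objective; this yields the $\Lambda_r(H)$ term as the minimum of $\Lambda_r(H,C')$ over all $r$-cliques $C'$, regardless of which one the weight ends up on.  Your exchange scheme cannot be salvaged simply by replacing $r-2$ with $r-1$, because the $\deg_C(u) = r-1$ case requires the compression step rather than a strict-improvement argument.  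The remaining pieces of your proposal (the lower bound, the deduction that $\omega(H) = r$, the final comparison of matching bounds) are fine.
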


Given a graph $H$ of order $\ell$ with $\omega(H)=r$, the following lemma tells us which of its blow-ups to order~$n$ maximises $e(G)$.

\begin{lem}\label{lemlagrangian}
Let $H$ be a graph on $\ell$ vertices with $\omega(H)=r$ and let $G$ be a blow-up of $H$ with $|V(G)|=n$. For large $n$, $e(G)$ is maximised up to $O(1)$ by letting $C$ be an $r$-clique in $H$ for which the quantity
\begin{equation*}\label{eq:degreesum}  
\sum_{i \in C} \deg(i)
\end{equation*}
is maximised and by blowing up each $i \in C$ by a factor of $(n-\ell)/r+1$. 
\end{lem}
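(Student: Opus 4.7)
The plan is to view $e(G)$ as the quadratic form $\sum_{ij \in E(H)} x_i x_j$ in the blow-up vector $(x_1, \dots, x_\ell)$, where $x_i \geq 1$ is integer and $\sum_i x_i = n$. Substituting $y_i = x_i - 1 \geq 0$ and setting $N := n - \ell$, this becomes
\[
e(G) = e(H) + \sum_{i \in V(H)} \deg_H(i)\, y_i + \sum_{ij \in E(H)} y_i y_j,
\]
under $\sum_i y_i = N$. Since $e(H) = O(1)$, the task reduces to maximising the last two terms over $y$.

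The first step is a Motzkin--Straus shifting argument. If $u$ and $v$ both lie in the support of $y$ with $uv \notin E(H)$, then, because the $y_u y_v$ term is absent, the objective is \emph{linear} in the amount of mass transferred between $y_u$ and $y_v$; so one may push all the mass of one of them onto the other without decreasing $e(G)$. Iterating, we may assume that $\mathrm{supp}(y)$ lies inside a clique $C \subseteq V(H)$ of some size $s \leq \omega(H) = r$.

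Within such a clique we have $\sum_{i<j,\ i,j \in C} y_i y_j = \tfrac12\bigl(N^2 - \sum_{i \in C} y_i^2\bigr)$, so the problem collapses to the strictly concave program $\max \sum_{i \in C}\bigl(\deg_H(i)\, y_i - y_i^2/2\bigr)$ subject to $\sum_i y_i = N$, $y_i \geq 0$. For $n$ large the Lagrange optimum $y_i^{\ast} = \deg_H(i) - \lambda$ (with $\lambda = (D_C - N)/s$ and $D_C := \sum_{i \in C} \deg_H(i)$) lies in the interior of the simplex, and a short calculation yields optimal value
\[
\frac{(s-1) N^2}{2s} + \frac{N D_C}{s} + O(1).
\]
The coefficient $(s-1)/(2s)$ is strictly increasing in $s$, so for large $n$ the overall maximum requires $s = r$; then the linear coefficient $D_C/r$ is maximised exactly when $C$ is an $r$-clique maximising $\sum_{i \in C} \deg_H(i)$, which is the choice claimed in the lemma.

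It remains to check that the balanced blow-up $y_i = (n - \ell)/r$ (rounded so the $y_i$ are integers summing to $N$) attains this maximum up to $O(1)$. The balanced vector and $y_i^{\ast}$ differ by only $O(1)$ in each coordinate since $\deg_H(i)$ and $D_C/r$ depend only on $H$; indeed a direct computation gives the gap between the two values of the objective as $\tfrac12 \sum_{i \in C}\bigl(\deg_H(i) - D_C/r\bigr)^2$, which is $O(1)$. The only step needing genuine care is the Motzkin--Straus reduction, but this is painless here because the objective is linear along each non-edge direction, so no quadratic correction has to be tracked.
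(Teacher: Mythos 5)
Your proposal is correct and follows essentially the same route as the paper's sketch: reduce to the surrogate objective via $y_i = x_i - 1$, apply the Motzkin--Straus shifting argument to confine the support of $\yvec$ to a clique, and then solve the resulting one-clique concave program. Your version is in fact slightly more complete than the paper's: you explicitly identify the quadratic coefficient $(s-1)/(2s)$, show it is increasing in $s$ so that the optimal clique size is $s=r$, and verify quantitatively that the balanced rounding loses only $\tfrac12\sum_{i\in C}(\deg_H(i)-D_C/r)^2 = O(1)$. The paper's sketch takes the size-$r$ choice for granted and simply notes that the optimal $y_i$ differ by constants; your computation makes both of these points explicit, but the underlying method is the same.
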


The proof of the lemma is a variant of the proof Tur\'an's theorem due to Motzkin and Straus~\cite{MS}, so we shall only give a sketch of the argument.

\begin{proof}[Sketch of proof.]
It is easy to see that up to a $O(1)$ error term, the problem of maximising $e(G)$ is equivalent to the problem of determining
\[
\max\Bigl\{2\sum_{ij \in E(H)} x_i x_j \, : \, \sum_{i \in V(H)} x_i = n, x_i \geq 1 \text{ for all $i$}\Bigr\}.
\]
Letting $y_i = x_i - 1$, this is equivalent to determining
\begin{equation}\label{eq:maxprogram}
\max\Bigl\{2\Bigl(\sum_{ij \in E(H)} y_i y_j + \sum_{i \in V(H)} y_i \deg(i) + e(H)\Bigr) \, : \, \sum_{i \in V(H)} y_i = n - \ell, y_i \geq 0 \text{ for all $i$}\Bigr\}.
\end{equation}
Thus, given~$\yvec \in \R^{\ell}$, we define
\[
f(\yvec) = 2\sum_{ij \in E(H)} y_i y_j + 2\sum_{i \in V(H)} y_i \deg(i) + 2e(H).
\]

By compactness, the maximum in~\eqref{eq:maxprogram} is achieved.  Moreover, by arguing as in~\cite{MS}, one can show that if $\yvec$ achieves the maximum in~\eqref{eq:maxprogram} and $C = \{i : y_i > 0\}$, then we may assume that $C$ is a clique.  A simple calculation shows that $f(\yvec)$ is maximised when the quantity
\begin{equation}\label{eq:charfunction}
\sum_{i \in C} \bigl(y_i^2 - 2y_i \deg(i)\bigr)
\end{equation}
is minimised.  Recalling that $\sum_{i \in C} y_i = n - \ell$, we find that~\eqref{eq:charfunction} is minimised when
\[
y_i - \deg(i) = y_j - \deg(j)
\]
for all $i$ and $j$, which shows that the $y_i$ must differ by constants (with respect to~$n$). Shifting constant weights in order to make all weights in $C$ equal will then change $e(G)$ only by a constant.
\end{proof}

The next result follows from Lemma~\ref{lemlagrangian} by straightforward calculations.

\begin{cor}\label{cor:blowupbound}
Let $H$ be a graph on $\ell$ vertices with $\omega(H) = r$.  If a graph~$G$ of order~$n$ is a blow-up of~$H$ with the maximum number of edges, then
\begin{equation*}\label{eq:maxblowup}
e(G)=t_{n,r}-\dfrac{\Lambda_r(H)}{r}\cdot n + O(1).
\end{equation*}
\end{cor}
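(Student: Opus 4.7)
The plan is to apply Lemma~\ref{lemlagrangian} to pin down the structure of the optimal blow-up and then perform a direct edge count, grouping the edges of $G$ according to where their endpoints lie. By the lemma, up to an $O(1)$ additive error, the maximising $G$ is obtained by selecting an $r$-clique $C \subset V(H)$ that maximises $D := \sum_{c \in C} \deg_H(c)$, blowing each vertex of $C$ up by a factor of approximately $(n - \ell)/r + 1$, and leaving the $\ell - r$ vertices of $V(H) \setminus C$ as singletons. Note that by the definition~\eqref{eq:LambdaGdef} of $\Lambda_r$, the maximality of $D$ gives $\Lambda_r(H) = (r-1)\ell - D$.

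I would then split $E(G)$ into three disjoint groups and account for each up to $O(1)$. First, the edges inside the blow-up of $C$ form a balanced complete $r$-partite graph on $N := n - (\ell - r)$ vertices, contributing exactly $t_{N, r}$; a short expansion gives $t_{N,r} = t_{n,r} - (\ell - r)\cdot \frac{(r-1)n}{r} + O(1)$, since each of the $\ell - r$ vertices removed to pass from $T_{n,r}$ to $T_{N,r}$ has a neighbourhood of size $(r-1)n/r + O(1)$. Second, each $v \in V(H) \setminus C$ is joined to all blow-up copies of its $C$-neighbours; since each $c \in C$ is blown up to roughly $n/r$ copies and $\sum_{v \notin C} \deg_C(v) = D - r(r-1)$, this group contributes $\tfrac{n}{r}\bigl(D - r(r-1)\bigr) + O(1)$. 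Third, edges internal to $V(H) \setminus C$ contribute $O(1)$, since $|V(H) \setminus C| = \ell - r$ is constant.

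Adding the three contributions and simplifying (the $r(r-1)n/r$ term combines with the $(\ell - r)(r-1)n/r$ term to give $(r-1)\ell \cdot n/r$, which together with $Dn/r$ assembles $\Lambda_r(H)\cdot n/r$) yields
\[
e(G) = t_{n,r} - \frac{\Lambda_r(H)}{r}\cdot n + O(1),
\]
as claimed. I do not foresee any serious obstacle: the only point deserving care is that the Turán-style roundings in the blow-up introduce only $O(1)$ error, but this is immediate, since the $r$ blow-up factors of $C$ differ by at most one (so the first group's count is exactly $t_{N,r}$) and the edges between $V(H) \setminus C$ and the blow-up of $C$ are each pinned down up to $O(1)$ because $|V(H) \setminus C| = O(1)$.
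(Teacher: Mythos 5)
Your proof is correct and follows exactly the route the paper intends: the paper simply states that the corollary ``follows from Lemma~\ref{lemlagrangian} by straightforward calculations,'' and your three-way decomposition of $E(G)$ (clique blow-up, cross edges to the singleton vertices, internal edges among singletons) together with the identity $\sum_{v \notin C} \deg_C(v) = D - r(r-1)$ is precisely that calculation, carried out correctly.
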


It is now a short step to complete the proof of Theorem~\ref{thm:Kr+1free}.

\begin{proof}[Proof of Theorem \ref{thm:Kr+1free}]
It follows from Corollary~\ref{cor:sublinear} (for $r = 2$) and Theorem~\ref{thm:kChromaticBounded} (in general) that $G$ is a blow-up of a fixed-size graph.  The result then follows from Corollary~\ref{cor:blowupbound}.
\end{proof}

\begin{remark}\label{re:finitary}
With additional work, one can show that there exists a finite-time algorithm such that for each $r$ and $k$,  it is possible to determine the extremal size of a $K_{r+1}$-free, ($\geq\!k$)-chromatic graph exactly.  We omit the details.  We also note that the fact that such an algorithm exists for $r = 2$ was first observed by Simonovits~\cite{Sim69}.
\end{remark}

\begin{remark}\label{re:ChromaticConditionsExtremal}
Theorem~\ref{thm:Kr+1free} also holds for graphs that satisfy the chromatic conditions discussed in Remark~\ref{re:ChromaticConditions}.  (If $\mathcal{A}$ is a chromatic condition, then~\eqref{eq:MaxEdgesKr+1Free} holds with $\Lambda_r(k)$ replaced by $\Lambda_r(\mathcal{A})$, which we define to be the minimum value of~$\Lambda_r(H)$ over all $K_{r+1}$-free graphs~$H$ satisfying $\mathcal{A}$.)
\end{remark}

Recall that Theorem~\ref{thm1} gives the largest number of edges in a $K_{r+1}$-free, ($\geq\!k$)-chromatic graph where $k=r+1$. As an application of Theorem~\ref{thm:Kr+1free}, we will establish the analogous result for $k=r+2$, up to a $O(1)$ error term.

\begin{thm}\label{thm:rplus2}
Let $G$ be a $K_{r+1}$-free, $(\geq\!r + 2)$-chromatic graph that maximises $e(G)$ over all such graphs of order $n$. If $r = 2$, then 
\[
e(G)= t_{n,2} - \dfrac{3n}{2} + O(1),
\]
and if $r \geq 3$, then
\[
e(G)= t_{n,r} - \dfrac{2n}{r} + O_{r}(1).
\]
\end{thm}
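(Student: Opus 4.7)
The plan is to apply Theorem~\ref{thm:Kr+1free}: since any extremal $K_{r+1}$-free, $(\geq\!k)$-chromatic graph satisfies $e(G) = t_{n,r} - \Lambda_r(k)/r \cdot n + O_{k,r}(1)$, proving Theorem~\ref{thm:rplus2} reduces to the exact determinations $\Lambda_2(4) = 3$ and $\Lambda_r(r+2) = 2$ for every $r \geq 3$. Thus the bulk of the work is the explicit computation of $\Lambda_r(r+2)$ via matching lower and upper bounds, and an application of Theorem~\ref{thm:Kr+1free} at the end.

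For the lower bound, I would fix a $K_{r+1}$-free graph $H$ with $\omega(H) = r$ and $\chi(H) \geq r + 2$, and an $r$-clique $C = \{c_1, \dots, c_r\}$ realising $\Lambda_r(H)$. Using $K_{r+1}$-freeness, the set $V_i$ of vertices of $V(H) \setminus C$ with $N(v) \cap C = C \setminus \{c_i\}$ is independent and not adjacent to $c_i$, so $V_i \cup \{c_i\}$ is independent. Assigning colour $i$ to this set gives a natural partial $r$-colouring, and the deficit $\Lambda_r(H) = \sum_v (r - 1 - \deg_C(v))$ is concentrated on the set $R$ of vertices adjacent to at most $r - 2$ vertices of $C$. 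A case-by-case analysis shows that whenever $\Lambda_r(H) \leq 1$, the remaining (single) vertex of $R$ can be placed in a new colour class, yielding an $(r+1)$-colouring and contradicting $\chi(H) \geq r+2$; this gives $\Lambda_r(r+2) \geq 2$ for all $r \geq 2$. For $r = 2$, the stronger bound $\Lambda_2(4) \geq 3$ requires a more delicate swap argument: triangle-freeness forces the two vertices of $R$ (when $\Lambda_2(H) = 2$) to have disjoint neighbourhoods among $A = N(c_1) \setminus C$ and $B = N(c_2) \setminus C$, and moving a suitable $V_i$-vertex into a new colour class frees up one of the two Tur\'an colours for one of the $R$-vertices, producing a valid $3$-colouring.

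For the upper bound I would exhibit explicit witnesses. For $r = 2$, the Gr\"otzsch graph has $11$ vertices, is triangle-free, has chromatic number $4$, and its maximum edge degree sum is $8$ (achieved by edges incident to the central vertex of degree $5$ and a shell vertex of degree $3$, or by pentagon edges with both endpoints of degree $4$), giving $\Lambda_2 = 11 - 8 = 3$. For $r \geq 3$, I would construct a bounded-size graph $H_r$ built on a Tur\'an-like skeleton with partition classes $V_1, \dots, V_r$ and two additional ``defect'' vertices $u_1, u_2$, each missing exactly two elements of an optimal $r$-clique, with their $V_i$-neighbourhoods arranged so that (i)~$K_{r+1}$-freeness is preserved, (ii)~no $(r+1)$-colouring can simultaneously accommodate both $u_i$, forcing $\chi(H_r) = r + 2$, and (iii)~the degree-sum deficit of the optimal $r$-clique is exactly $2$. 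Combining these bounds with Theorem~\ref{thm:Kr+1free} yields $e(G) = t_{n,2} - 3n/2 + O(1)$ and $e(G) = t_{n,r} - 2n/r + O_r(1)$ respectively.

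The main technical obstacle is the construction for $r \geq 3$. A naive approach such as joining the Gr\"otzsch graph with $K_{r-2}$ only achieves $\Lambda_r = 3$, so the tight construction must exploit the extra degrees of freedom available in the partition classes $V_3, \dots, V_r$ (which for $r \geq 3$ allow the two defect vertices to miss different pairs of clique elements without creating a $K_{r+1}$). Once the construction is in hand, the rest of the argument is routine application of Theorem~\ref{thm:Kr+1free}.
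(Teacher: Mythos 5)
Your overall strategy---reduce to computing $\Lambda_2(4)$ and $\Lambda_r(r+2)$ and then invoke Theorem~\ref{thm:Kr+1free}---is the same as the paper's, and your lower-bound arguments (the general bound $\Lambda_r(r+2)\geq 2$ via partial $r$-colouring of $C\cup S$, and the sharper $\Lambda_2(4)\geq 3$ via a more careful colour rearrangement when $|S|\leq 2$) track the paper's Lemmas~\ref{le:Lambdalb} and~\ref{le:1stLambdabound} closely. The Gr\"otzsch graph witness for $r=2$ is also identical.

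The genuine gap is the upper bound for $r\geq 3$. You correctly observe that the naive ``Gr\"otzsch join $K_{r-2}$'' construction gives only $\Lambda_r=3$, and you explicitly defer the actual construction (``Once the construction is in hand, the rest of the argument is routine''). But this deferred piece is the crux of the theorem, and your sketch of a ``Tur\'an-like skeleton plus two defect vertices'' does not on its own yield a graph with $\chi = r+2$: with only two vertices adjacent to $r-2$ clique vertices each, it takes genuine work to force two extra colours rather than one, and your proposal gives no mechanism for this. The paper handles this in two steps that your proposal misses: first, the cheap monotonicity $\Lambda_r(k)\leq\Lambda_{r-1}(k-1)$ (Lemma~\ref{le:monotone}, just add a universal vertex), which telescopes to $\Lambda_r(r+2)\leq\Lambda_3(5)$ and so reduces \emph{all} $r\geq 3$ to the single base case $r=3$; and second, an explicit $13$-vertex graph realising $\Lambda_3(5)=2$ (Lemma~\ref{le:Lambda35}), whose $5$-chromaticity is verified by a dedicated case analysis. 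Without either that reduction or a concrete construction plus a chromatic-number argument for each $r$, the upper bound is unproved, and the theorem does not follow.
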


By Theorem~\ref{thm:Kr+1free}, in order to prove Theorem~\ref{thm:rplus2}, it is enough to determine $\Lambda_r(r+2)$ for all $r \geq 2$.

\begin{lemma}\label{le:1stLambdabound}
We have $\Lambda_2(4) = 3$.
\end{lemma}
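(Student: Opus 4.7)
The plan is to prove both $\Lambda_2(4) \leq 3$ and $\Lambda_2(4) \geq 3$ separately. For the upper bound I would take $G$ to be the Gr\"otzsch graph, i.e., the Mycielskian of~$C_5$: it has $n=11$ vertices, is triangle-free and $4$-chromatic. A direct degree count shows that the five pentagon vertices have degree~$4$, the five Mycielski copies have degree~$3$, and the apex vertex has degree~$5$, so the maximum of $\deg(x)+\deg(y)$ over edges $xy$ equals~$8$; hence $\Lambda_2(G) = 11 - 8 = 3$, giving $\Lambda_2(4) \leq 3$.

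For the lower bound I would argue by contradiction. Suppose $G$ is triangle-free with $\chi(G) \geq 4$ and $\Lambda_2(G)\leq 2$, and let $uv$ be an edge realising the maximum in~\eqref{eq:LambdaGdef}, so $\deg(u)+\deg(v)\geq n-2$. Since $G$ is triangle-free, $N(u)\cap N(v)=\emptyset$, and writing $A=N(u)\setminus\{v\}$, $B=N(v)\setminus\{u\}$ and $R=V(G)\setminus(\{u,v\}\cup A\cup B)$, the sets $A,B$ are disjoint independent sets with $|R|=\Lambda_2(G)\leq 2$. The goal is to produce a proper $3$-colouring of~$G$ in each possible value of~$|R|$, contradicting $\chi(G)\geq 4$.

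For $|R|\leq 1$, or for $|R|=2$ with $R$ independent, the partition $\{v\}\cup A$, $\{u\}\cup B$, $R$ is a proper $3$-colouring. The only substantive sub-case is $R=\{w_1,w_2\}$ with $w_1w_2\in E(G)$. Writing $A_2 = N(w_2)\cap A$, I would use the partition
\[
\{v,w_2\}\cup(A\setminus A_2), \quad \{u\}\cup B, \quad \{w_1\}\cup A_2.
\]
Each class is independent: the first because $v$ is adjacent to neither $A$ nor $w_2$, while $w_2$ has no neighbour in $A\setminus A_2$ by definition; the second because $B\cup\{u\}$ is independent by triangle-freeness; and the third because any edge from $w_1$ to some $a\in A_2\subseteq N(w_2)$ would, together with $w_1w_2$, form a triangle $w_1w_2a$. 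This yields the desired contradiction, completing the proof that $\Lambda_2(4)\geq 3$.

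The main obstacle is recognising that the ``last'' sub-case, where the two residual vertices of~$R$ are adjacent, does not admit the obvious symmetric $3$-colouring (placing both $w_1$ and $w_2$ into a new colour class fails because they form an edge), and must be handled by the asymmetric swap above, which in turn crucially uses the triangle-freeness of the edge $w_1w_2$ to ensure that $w_1$ has no neighbour in $N(w_2)\cap A$.
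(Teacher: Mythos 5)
Your proof is correct and takes essentially the same approach as the paper's. Both directions coincide: the upper bound is certified by the Gr\"otzsch graph (the paper asserts the degree count you verified), and the lower bound is obtained by exhibiting an explicit $3$-colouring of any triangle-free graph with $\Lambda_2 \leq 2$, where the subcase of two adjacent residual vertices is resolved by the same asymmetric swap --- moving one of them into the colour class containing one endpoint's neighbourhood and ejecting its neighbours there into the class of the other residual vertex, triangle-freeness of the residual edge guaranteeing independence.
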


Let us note that Lemma~\ref{le:1stLambdabound} was also proved in~\cite{Sim69}.

\begin{proof}
Let $H$ be a triangle-free graph and let $v$ and $w$ be adjacent vertices of~$H$ such that $\abs{H} - \deg(v) - \deg(w) = \Lambda_2(H)$.  Because $v$ and $w$ are adjacent and $H$ is triangle-free, the quantity~$\abs{H} - \deg(v) - \deg(w)$ is exactly the number of common non-neighbours of $v$~and~$w$.  Let $S$ denote the set of common non-neighbours of $v$~and~$w$.  We claim that if $\abs{S} \leq 2$, then $H$ is $3$-colourable.

First, suppose that $\abs{S} = 1$ and let $S = \{x\}$.  Then, because $N(v)$ and $N(w)$ are independent sets, we may give colour~$1$ to each vertex in~$N(w)$ (including $v$), colour~$2$ to each vertex in $N(v)$ (including $w$), and colour~$3$ to~$x$.

If $\abs{S} = 2$ and $S$ consists of two independent vertices $x$ and~$y$, then we may give colour~$3$ to both of them.  If $x$ and $y$ are adjacent, then we modify the colouring above: we give colour~$3$ to~$x$, colour~$1$ to~$y$ and colour~$3$ to all vertices of~$N(y) \cap N(w)$.  Because $x$ and $y$ are adjacent, they have no common neighbours, and so we have a proper $3$-colouring of~$H$.

It follows that $\Lambda_2(4) \geq 3$.
Finally, $\Lambda_2(4) = 3$ is realised when $H$ is the Gr\"otzsch graph and $v$ and $w$ are adjacent vertices of degree~$4$.
\end{proof}

Now we establish a relation between extremal numbers for different values of~$r$.

\begin{lem}\label{le:monotone}
We have $\Lambda_r(k)\leq \Lambda_{r-1}(k-1).$
\end{lem}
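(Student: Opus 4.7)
The plan is to establish the inequality by an explicit construction: start from a graph that realises $\Lambda_{r-1}(k-1)$ and ``cone'' over it to produce a graph that realises (or beats) $\Lambda_r(k)$. Concretely, let $H$ be a $K_r$-free graph with $\omega(H) = r-1$ and $\chi(H) \geq k-1$ achieving $\Lambda_{r-1}(H) = \Lambda_{r-1}(k-1)$, and let $C' \subset V(H)$ be an $(r-1)$-clique maximising $\sum_{u \in C'} \deg_H(u)$. Form $G$ by adjoining a new vertex $v$ adjacent to every vertex of $H$.

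I would first verify that $G$ is admissible for the definition of $\Lambda_r$: since $v$ is joined to all of $H$, any $r$-clique in $G$ containing $v$ corresponds to an $(r-1)$-clique in $H$ (so $\omega(G) = \omega(H) + 1 = r$), and any proper colouring of $G$ restricts to a proper colouring of $H$ using one fewer colour (so $\chi(G) = \chi(H) + 1 \geq k$). Hence $G$ is a valid competitor in the minimisation defining $\Lambda_r(k)$.

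Next I would use the specific $r$-clique $C := C' \cup \{v\}$ in $G$ to bound $\Lambda_r(G)$ from above. Since each vertex $u \in C'$ has $\deg_G(u) = \deg_H(u) + 1$ and $\deg_G(v) = |V(H)|$, a direct calculation gives
\[
(r-1)|V(G)| - \sum_{u \in C} \deg_G(u) = (r-1)(|V(H)|+1) - \sum_{u \in C'} \deg_H(u) - (r-1) - |V(H)| = \Lambda_{r-1}(H).
\]
Because $\Lambda_r(G)$ is defined via the $r$-clique that \emph{maximises} the degree sum, the clique $C$ merely yields an upper bound, so $\Lambda_r(G) \leq \Lambda_{r-1}(H) = \Lambda_{r-1}(k-1)$. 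Taking the minimum over all admissible $G$ obtained this way gives $\Lambda_r(k) \leq \Lambda_{r-1}(k-1)$.

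There is no serious obstacle here; the only thing to be careful about is the direction of the inequality in the definition of $\Lambda_r(G)$ (the max-degree clique makes $\Lambda_r$ small, so a suboptimal choice of clique provides an upper bound, which is precisely what the argument needs). The construction is essentially forced: coning is the canonical way to raise both $\omega$ and $\chi$ by one simultaneously, and the arithmetic is arranged so that the added vertex contributes exactly the right cancellation between the term $(r-1)|V(G)|$ and the degree sum.
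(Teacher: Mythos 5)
Your proof is correct and is essentially the same as the paper's: the paper also takes a graph $H$ realising $\Lambda_{r-1}(k-1)$ and adds a universal vertex, then notes $\Lambda_r(H\cup\{u\})=\Lambda_{r-1}(H)$. You merely spell out the verification of $\omega$, $\chi$ and the degree-sum arithmetic in more detail (and are slightly more cautious in only claiming the inequality $\Lambda_r(G)\leq\Lambda_{r-1}(H)$, which is all that is needed).
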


\begin{proof}
Take $H$ that realises $\Lambda_{r-1}(k-1)$ and add a new vertex $u$ adjacent to every vertex of $H$.  Then $\Lambda_r(H \cup \{u\}) = \Lambda_{r-1}(H) = \Lambda_{r-1}(k-1)$, and the result follows.
\end{proof}

Next, we give a lower bound on $\Lambda_r(k)$.

\begin{lem}\label{le:Lambdalb}
We have $\Lambda_r(k) \geq k - r$.
\end{lem}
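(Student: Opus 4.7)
The plan is to prove the stronger pointwise inequality $\chi(G) \leq r + \Lambda_r(G)$ for every $K_{r+1}$-free graph~$G$ with $\omega(G) = r$. Rearranging this and minimising over all $G$ with $\chi(G) \geq k$ will give $\Lambda_r(k) \geq k - r$ immediately. In fact I will prove the slightly stronger pointwise statement that $\chi(G) \leq r + \Lambda_r(G,C)$ for \emph{every} $r$-clique $C = \{c_1,\dots,c_r\}$ in~$G$, where $\Lambda_r(G,C) := \sum_{v \in V(G)} (r-1-\deg_C(v))$ is the quantity on the right-hand side of \eqref{eq:LambdaGaltdef} with $C$ fixed; specialising to the clique realising $\Lambda_r(G)$ then finishes the lemma.

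The colouring I would exhibit is the natural one. Split $V(G) \setminus C$ into $A := \{v : \deg_C(v) = r-1\}$ and $B := \{v : \deg_C(v) \leq r-2\}$. Assign colour~$i$ to~$c_i$; give each $v \in A$ the colour~$i(v)$ where $c_{i(v)}$ is the unique non-neighbour of~$v$ in~$C$; and give every vertex of~$B$ its own private new colour from $\{r+1,\dots,r+|B|\}$. Since each $v \in B$ contributes at least~$1$ to the sum defining $\Lambda_r(G,C)$, the total number of colours used is $r + |B| \leq r + \Lambda_r(G,C)$, which delivers the desired bound provided the colouring is proper.

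The main (and essentially only) obstacle is verifying properness. Clashes involving a vertex of~$C$ or~$B$ are immediate: $c_i$ is non-adjacent by construction to every $v \in A$ of colour~$i$, and the vertices of~$B$ receive pairwise distinct fresh colours. The case requiring an argument is two vertices $v, w \in A$ with $i(v) = i(w) = i$. I claim $v \not\sim w$: otherwise $\{v,w\} \cup (C \setminus \{c_i\})$ would form a $K_{r+1}$, since $v$~and~$w$ are both adjacent to every $c_j$ with $j \neq i$ and the $c_j$ are pairwise adjacent, contradicting the $K_{r+1}$-freeness of~$G$. This single clique-formation observation is the crux of the proof; everything else is routine bookkeeping, and no further calculation or extremal input is needed.
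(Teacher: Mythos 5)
Your proof is correct and follows essentially the same approach as the paper: the paper also colours $C$ together with the set $S = \{v : \deg_C(v) = r-1\}$ (your $A$) using $r$ colours via the same assignment, and then counts that at least $k-r$ vertices must lie outside $C \cup S$, each contributing at least~$1$ to $\Lambda_r$. The only cosmetic difference is that you phrase the conclusion as an explicit proper colouring of all of $G$ using $r + |B|$ colours, whereas the paper argues via $\chi(H[C\cup S]) \leq r$ together with the fact that adding a vertex raises the chromatic number by at most one; both are the same idea, and your observation that the bound holds for every $r$-clique $C$, not just the optimal one, is a true but unneeded strengthening.
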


\begin{proof}
Let $H$ be a graph with $\omega(H) = r$ and $\chi(H) \geq k$ and let $C \subset V(H)$ be an $r$-clique that achieves the value of~$\Lambda_r(H)$.  Let $S = \{v \notin C : d_C(v) = r - 1\}$.  We observe that $H[C \cup S]$ is $r$-colourable: after properly colouring $C$, give each $v\in S$ the colour of its non-neighbour in $C$, and observe that because $\omega(H) = r$, if $u$,~$v \in S$ have the same non-neighbour in $C$, then they are independent.  Thus, our assumption that $\chi(H) \geq k$ means that $H$ contains at least~$k - r$ vertices not in $C \cup S$, and our assumption that $H$ is $K_{r+1}$-free means that each such vertex is adjacent to at most~$r - 2$ vertices of~$C$.  It follows from~\eqref{eq:LambdaGaltdef} that each such vertex contributes at least~1 to~$\Lambda_r(H)$, which proves the lemma.
\end{proof}

In order to prove Theorem~\ref{thm:rplus2}, it remains to compute $\Lambda_r(r + 2)$ for all $r \geq 3$.  However, it turns out to be enough to determine $\Lambda_3(5)$, which we now do.

\begin{lemma}\label{le:Lambda35}
We have $\Lambda_3(5) = 2$.
\end{lemma}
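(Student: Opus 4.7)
The plan is to match the lower bound from Lemma~\ref{le:Lambdalb} with an explicit upper-bound construction.

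For the lower bound, applying Lemma~\ref{le:Lambdalb} with $r = 3$ and $k = 5$ immediately yields $\Lambda_3(5) \geq 5 - 3 = 2$.

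For the upper bound, I want to exhibit a $K_4$-free graph $H$ with $\chi(H) \geq 5$ and some triangle $C = \{v_1, v_2, v_3\} \subset V(H)$ satisfying $\Lambda_3(H) \leq 2$. Set $a_j := \abs{\{v \in V(H)\setminus C : \deg_C(v) = j\}}$ for $j = 0,1,2$; since $H$ is $K_4$-free we have $a_3 = 0$, so $\Lambda_3(H) = 2a_0 + a_1$ by~\eqref{eq:LambdaGaltdef}. Hence $\Lambda_3(H) \leq 2$ forces $(a_0, a_1) \in \{(1,0), (0,2)\}$. I would first rule out the case $(1,0)$. Here $V(H)\setminus C$ decomposes as $\{x\} \cup X_{12} \cup X_{13} \cup X_{23}$, where $X_{ij}$ is the set of common neighbours of $v_i$ and $v_j$ lying outside $C$. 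By $K_4$-freeness each $X_{ij}$ is independent, and $\{v_i\} \cup X_{jk}$ is independent whenever $\{i,j,k\} = \{1,2,3\}$; hence
\[
\bigl(\{v_1\} \cup X_{23},\ \{v_2\} \cup X_{13},\ \{v_3\} \cup X_{12},\ \{x\}\bigr)
\]
is a proper $4$-colouring of $H$, contradicting $\chi(H) \geq 5$.

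Thus the construction must realise $(a_0, a_1) = (0,2)$: two vertices $x_1, x_2$, each adjacent to exactly one $v_i$, with every other non-$C$ vertex contained in $X_{12} \cup X_{13} \cup X_{23}$. My plan is to begin with the triangle $C$, populate the sets $X_{ij}$ with a prescribed number of vertices, add carefully chosen cross-edges between different $X_{ij}$'s (none of which can create a $K_4$ since the vertex sets $X_{ij}, X_{jk}$ share at most the vertex $v_j$ of $C$), and then place $x_1, x_2$ together with the edge $x_1 x_2$ and further edges into the $X_{ij}$'s. The main obstacle will be engineering these last edges: because $H \setminus \{x_1, x_2\}$ is automatically $3$-chromatic via the canonical independent-set partition above, the entire burden of raising $\chi(H)$ to $5$ must be placed on $x_1$, $x_2$ and their surrounding adjacencies. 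At the same time, any edge $x_i u$ with $u$ a common neighbour of two adjacent vertices of $C$ threatens to create a $K_4$ (whenever $u$ is also adjacent to the unique $v_j$ adjacent to $x_i$ together with a further common neighbour), which significantly restricts which edges incident to $x_1, x_2$ can be used. I expect the final step to involve a small ad-hoc configuration, tailored so that in any putative $4$-colouring with $v_i \mapsto i$ the forced colours of the neighbours of $x_1$ and $x_2$ exhaust $\{1,2,3,4\}$ simultaneously, contradicting the edge $x_1 x_2$; once such an $H$ is found we conclude $\Lambda_3(5) \leq 2$, completing the proof.
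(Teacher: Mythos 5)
Your lower bound is fine, and your structural analysis ruling out $(a_0,a_1)=(1,0)$ is correct and a nice observation (the paper does not bother with it, since it simply presents the witness graph). But the upper bound is where all the work lies, and you never carry it out: you correctly identify that a witness graph must have exactly two vertices $x_1,x_2$ with one neighbour each in $C$ and all other non-$C$ vertices in the sets $X_{ij}$, but then you only describe the \emph{plan} for building such a graph (``I expect the final step to involve a small ad-hoc configuration\dots once such an $H$ is found'') without exhibiting one. The crux of the lemma is precisely this construction, and without it the claim $\Lambda_3(5)\leq 2$ is unproved.

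For comparison, the paper constructs a concrete $12$-vertex graph: a triangle $v_1v_2v_3$, seven vertices in the $X_{ij}$ (labelled $a_{12},b_{12}\in X_{12}$; $b_{23},c_{23}\in X_{23}$; $a_{13},b_{13},c_{13}\in X_{13}$), and two vertices $x,y$ with $N_C(x)=\{v_1\}$, $N_C(y)=\{v_3\}$, $xy\in E$; the edges from $x,y$ into the $X_{ij}$ and a few cross-edges $a_{ij}c_{k\ell}$ are chosen so that every proper colouring in which $v_i\mapsto i$ forces a fifth colour somewhere (a short case analysis on the colours of $x$ and $y$). This is exactly the ``ad-hoc configuration'' you gesture at, and it is the part you must actually supply. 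Your proposal, as written, establishes only $\Lambda_3(5)\geq 2$ and narrows the search space for a witness; it does not prove $\Lambda_3(5)=2$.
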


\begin{proof}
Lemma~\ref{le:Lambdalb} implies that $\Lambda_3(5) \geq 2$.  To show that equality holds, we define a graph~$H$ as follows.  Let $v_1$, $v_2$ and~$v_3$ be the vertices of a triangle.  Let $a_{12}$ and $b_{12}$ be adjacent to $v_1$~and~$v_2$, let $b_{23}$ and $c_{23}$ be adjacent to $v_2$~and~$v_3$, and let $a_{13}$, $b_{13}$ and~$c_{13}$ be adjacent to $v_1$~and~$v_3$.  Let $x$ be adjacent to~$v_1$ and to both of the~$a_{ij}$,  let $y$ be adjacent to~$v_3$ and to both of the~$c_{ij}$, and let $x$ and $y$ be adjacent to each other and to all of the $b_{ij}$.  Finally, if $(i, j) \neq (k, \ell)$, let $a_{ij}$ be adjacent to~$c_{k\ell}$.

By inspection, $H$ is $K_4$-free and $\Lambda_3(H) = 2$.  We will show that $H$ is not $4$-colourable.  Consider a proper colouring of~$V(H)$.  For each $i$, we give colour~$i$ to~$v_i$.  We will show that no matter what colours we give to $x$ and~$y$, some vertex of~$H$ must receive colour~5.  If we give colour~3 to~$x$ and colour~1 to~$y$, then neither $a_{12}$ nor $c_{23}$ can receive colours 1,~2 or~3, which means that one of them must receive colour~5.  In the same way, if we give colour~3 to~$x$ and colour~2 to~$y$, then either $a_{12}$ or~$c_{13}$ must receive colour~5, and if we give colour~2 to~$x$ and colour~1 to~$y$, then either $a_{13}$ or~$c_{23}$ must receive colour~5.  Finally, if we give colour~4 to either $x$ or $y$, then no matter what colour we give to the other, some~$b_{ij}$ must receive colour~5.

It follows that $\Lambda_3(5) = \Lambda_3(H) = 2$, as claimed.
\end{proof}

\begin{proof}[Proof of Theorem \ref{thm:rplus2}.]
The result for $r = 2$ follows from Lemma~\ref{le:1stLambdabound} and Theorem~\ref{thm:Kr+1free}.  If $r \geq 3$, then Lemmas \ref{le:monotone},~\ref{le:Lambdalb} and~\ref{le:Lambda35} imply that $\Lambda_r(r + 2) = 2$.  The result then follows from Theorem~\ref{thm:Kr+1free}.
\end{proof}

Let us note that it is possible to determine $\Lambda_2(k)$ for other small values of~$k$, as in the following proposition.

\begin{proposition}\label{prop:2ndLambdabound}
We have $\Lambda_2(5) = 6$.
\end{proposition}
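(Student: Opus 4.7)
The plan is to prove the two bounds $\Lambda_2(5) \leq 6$ and $\Lambda_2(5) \geq 6$ separately.

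For the upper bound, I would exhibit $H$ as the Mycielskian of the Gr\"otzsch graph (denote the latter $M_4$) and verify $\Lambda_2(H) = 6$ directly. Since Mycielski's construction preserves triangle-freeness and increases the chromatic number by exactly one, $H$ is triangle-free with $\chi(H) = 5$ and $|V(H)| = 23$. A direct degree count shows that in $M_4$ the five vertices inherited from the base $C_5$ have degree $4$, their five twins have degree $3$, and the apex has degree $5$; hence in $H$ the corresponding vertices have degrees $8, 6, 10$, their newly added twin copies have degrees $5, 4, 6$, and the new apex has degree $11$. Comparing all edge types, the maximum of $\deg(x) + \deg(y)$ is attained by the edge joining the new apex to the twin of the $M_4$-apex, with value $11 + 6 = 17$, so $\Lambda_2(H) = 23 - 17 = 6$.

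For the lower bound, I would prove the contrapositive: every triangle-free $H$ with $\Lambda_2(H) \leq 5$ satisfies $\chi(H) \leq 4$. Fix an edge $vw$ attaining the maximum of $\deg(v) + \deg(w)$, and set $A = N(v) \setminus \{w\}$, $B = N(w) \setminus \{v\}$, $S = V(H) \setminus (N(v) \cup N(w))$, so $|S| = \Lambda_2(H) \leq 5$. Triangle-freeness makes $A, B$ disjoint independent sets and also makes $\{v\} \cup B$ and $\{w\} \cup A$ independent; assign them colours $1$ and $2$ respectively. If $H[S]$ is bipartite, colour its two sides with $3$ and $4$, yielding a proper $4$-colouring of $H$. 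Otherwise $H[S]$ contains an odd cycle; triangle-freeness forces its length to be at least $5$, and combined with $|S| \leq 5$ this gives $|S| = 5$ and $H[S] = C_5$ (any chord of $C_5$ would create a triangle).

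In this non-bipartite case, label $S = \{s_1, \dots, s_5\}$ cyclically and assign colour $1$ to $s_1$, colour $3$ to $s_2$ and $s_4$, and colour $4$ to $s_3$ and $s_5$. The only obstruction is that $B_1 := N(s_1) \cap B$ is adjacent to $s_1$ but still coloured $1$; I would recolour $B_1 \cap N(s_3)$ to $3$, recolour $B_1 \cap N(s_4)$ to $4$, and assign either of $\{3, 4\}$ to the remaining vertices of $B_1$.

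The main (and essentially only) step to verify is that the recolouring creates no new conflict, and this is where triangle-freeness is used decisively. Any $b \in B_1$ adjacent to $s_2$ or $s_5$ would form a triangle with $s_1$, so $B_1 \cap (N(s_2) \cup N(s_5)) = \emptyset$; and any $b \in B_1 \cap N(s_3) \cap N(s_4)$ would form a triangle $bs_3s_4$, so $B_1 \cap N(s_3)$ and $B_1 \cap N(s_4)$ are disjoint. Consequently, the vertices newly coloured $3$ are not adjacent to the other colour-$3$ vertices $s_2, s_4$, and analogously for colour $4$; $B$ is independent, so there is no internal conflict within $B_1$; and the recoloured vertices stay away from $v$ (non-neighbour) and from $w, A$ (which carry colour $2 \notin \{3, 4\}$). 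This produces a proper $4$-colouring of $H$, contradicting $\chi(H) \geq 5$, and so $\Lambda_2(5) \geq 6$.
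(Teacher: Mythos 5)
Your proof is correct, and it deviates from the paper in an interesting way. For the lower bound $\Lambda_2(5)\geq 6$, the paper merely asserts (referencing Lemma~\ref{le:1stLambdabound}) that ``it is not hard to show that if $\abs{S}\leq 5$ then $H$ is $4$-colourable''; you actually carry this out, and your argument is sound. The only case needing real work is $H[S]\cong C_5$ (triangle-freeness rules out chords and forces $\abs{S}=5$), and your recolouring of $B_1=N(s_1)\cap B$ works: triangle-freeness guarantees $B_1$ avoids $N(s_2)\cup N(s_5)$ and splits $B_1$ into disjoint pieces meeting $N(s_3)$, $N(s_4)$, and neither, each of which can be absorbed into colour~$3$ or~$4$ without clashing with $s_2,s_3,s_4,s_5$ or with $B$ (independent). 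For the upper bound the two constructions are genuinely different: the paper builds a graph on a $C_5\cup K_1$ scaffold with roughly $50$ vertices, while you take the Mycielskian of the Gr\"otzsch graph, a natural $23$-vertex graph whose degree sequence is easy to compute from the formulas $\deg_{M(G)}(v)=2\deg_G(v)$, $\deg_{M(G)}(v')=\deg_G(v)+1$, $\deg_{M(G)}(z)=\abs{G}$; your bookkeeping giving maximum edge-degree-sum $11+6=17$ and hence $\Lambda_2=23-17=6$ is correct. Your construction is more economical and also makes the pattern behind the $\Lambda_2(4)=3$ case (the Gr\"otzsch graph itself) transparent: one naturally wonders whether the iterated Mycielskians of $C_5$ are extremal for $\Lambda_2(k)$ for all $k$, which the asymptotics in Remark~\ref{re:TriangleFreeAsymptotics} show cannot persist indefinitely.
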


\begin{proof}[Sketch of proof.]
Let $H$ be a triangle-free graph and let $S$ be as in the proof of Lemma~\ref{le:1stLambdabound}.  It is not hard to show that if $\abs{S} \leq 5$, then $H$ is $4$-colourable.

Let $F = C_5 \cup K_1$.  We construct a graph~$H$ consisting of $F$, two additional vertices $v$ and~$w$, and, for each independent set $I \subset V(F)$, two more vertices $v_I$ and~$w_I$.  We join each $v_I$ to~$v$, to all of the vertices in $I$, and to each $w_J$ for which $I \cap J = \emptyset$, and do likewise for the $w_I$.  We also join $v$ and $w$.  Observe that $H$ is triangle-free and that $\Lambda_2(H) = 6$.  It remains to show that $H$ is not $4$-colourable.  Up to relabeling of colours, $F$ admits two proper $3$-colourings and four proper $4$-colourings.  It is not hard to show that for each such colouring, there is a set of three colours, each of which we must give to some $v_I$ and to some $w_J$.  Thus, either $v$ or $w$ must receive colour~$5$, and $\Lambda_2(5) = \Lambda_2(H) = 6$, as claimed.
\end{proof}

\begin{remark}\label{re:TriangleFreeAsymptotics}
Using results from Ramsey theory, it is also possible to give good bounds on $\Lambda_2(k)$ for large $k$.  (We note that this connection was also observed in~\cite{Sim69}.)

First, letting $f_2(k)$ denote the minimum order of a triangle-free graph with chromatic number at least~$k$, it is not hard to show that there exist constants $c_1$,~$c_2 > 0$ such that 
\begin{equation}\label{eq:trianglefreeorder}
c_1 k^2 \log k \leq f_2(k) \leq c_2 k^2 \log k.
\end{equation}
Ajtai, Koml\'os and Szemer\'edi~\cite{AKS80,AKS81} and Kim~\cite{Kim} proved that there exist constants $C_1$,~$C_2 > 0$ such that for every $t \geq 2$, the Ramsey number~$R(3, t)$ satisfies
\begin{equation*}\label{eq:ramsey}
C_1\dfrac{t^2}{\log t} \leq R(3, t) \leq C_2\dfrac{t^2}{\log t}.
\end{equation*}
In other words, every triangle-free graph on $n$ vertices has an independent set of size at least~$c_3 \sqrt{n \log n}$, while there exist triangle-free graphs on $n$ vertices with no independent set of size more than~$c_4 \sqrt{n \log n}$.
The upper bound in~\eqref{eq:trianglefreeorder} then follows from the inequality
\[
\abs{V(G)} \leq \alpha(G) \cdot \chi(G),
\]
while the lower bound can be derived using a greedy algorithm: we repeatedly colour the largest independent set with a single colour and remove it from the graph, as the resulting graph is still triangle-free (see \cite[pp. 124--125]{JT} for details). 

It follows from~\eqref{eq:trianglefreeorder} that there exist constants $c_5$,~$c_6 > 0$ such that 
\begin{equation*}\label{eq:Lambdaasymptotics}
c_5 k^2 \log k \leq \Lambda_2(k) \leq c_6 k^2 \log k.
\end{equation*}
To see this, let $H$ be a triangle-free graph with chromatic number~$k$ on at most~$c_6 k ^2 \log k$ vertices. Then
\[
\Lambda_2(k)\leq \Lambda_2(H)\leq |V(H)|\leq c_6 k ^2 \log k.
\] 
On the other hand, given a triangle-free $k$-chromatic graph $H$ with vertices $v$ and~$w$ realising $\Lambda_2(H)$, put $F=H \setminus \left(N(v)\cup N(w)\right)$. Since $\chi(F)\geq k-2$, \eqref{eq:trianglefreeorder} implies that
\[
\Lambda_2(H)=|V(F)|\geq c (k-2)^2\log (k-2) \geq c_5 k^2 \log k,
\]
for a suitably chosen constant $c_5$.  Since this holds for every $H$, we conclude that $\Lambda_2(k) \geq c_5 k^2 \log k$, as claimed.

It is possible to derive asymptotic bounds on $\Lambda_r(k)$ for fixed $r \geq 3$ in a similar fashion. That said, the existing bounds on $R(s, t)$ for fixed $s \geq 4$ are too far apart to give matching upper and lower bounds when $r \geq 3$.
\end{remark}

\section{Acknowledgments}

We are grateful to Mikl\'os Simonovits for helpful conversations and for providing us with a copy of~\cite{Sim69}.

\bibliographystyle{amsplain}
\bibliography{ChromTuranBib}

\end{document}